\documentclass{Dissertate}
\usepackage{setspace}  
\usepackage{mytitlepage}
\usepackage{enumitem}
\usepackage[utf8]{inputenc}
\usepackage{graphicx, tikz, caption, subcaption, float, amsfonts, amsmath, amsthm, amssymb, subcaption, wrapfig, multicol}
\graphicspath{ {images/} }
\usepackage{pgfplots}
\pgfplotsset{compat=1.15}
\usepackage[numbers]{natbib}
\usepackage{algorithm}
\usepackage{algpseudocode}
\tikzset{>=stealth, node distance=1.5cm, main node/.style={circle,fill,inner sep=2pt}}

\usepackage{pgfplots}
\usepgfplotslibrary{patchplots}
\usetikzlibrary{patterns, positioning, arrows}
\pgfplotsset{compat=1.15}

\usepackage{fancyhdr} 
\usepackage{setspace, xcolor, ifmtarg, xifthen, environ, multido}
\usetikzlibrary{positioning, arrows}

\theoremstyle{definition}
\newtheorem{theorem}{Theorem}[section]
\newtheorem{proposition}[theorem]{Proposition}

\theoremstyle{definition} 
\newtheorem{lemma}[theorem]{Lemma}
\newtheorem{claim}[theorem]{Claim}

\theoremstyle{definition}
\newtheorem{definition}[theorem]{Definition}

\newtheorem{example}[theorem]{Example}
\newtheorem{remark}[theorem]{Remark}

\makeatletter
\providecommand{\thetitle}{\@title}
\providecommand{\theauthor}{\@author}
\makeatother
\title{Ollivier-Ricci Curvature of Riemannian Manifolds and Directed Graphs with Applications to Graph Neural Networks}

\providecommand{\TitleForTitlePage}{}
\renewcommand{\TitleForTitlePage}{%
  \begin{spacing}{1.0}
    \centering
    \Large
    Ollivier-Ricci Curvature of\par\vspace{0.25em}
    Riemannian Manifolds and Directed Graphs\par\vspace{0.25em}
    with Applications to Graph Neural Networks
  \end{spacing}%
}

\author{Eleanor Pyle Wiesler}

\providecommand{\advisors}{}
\renewcommand{\advisors}{Professor Melanie Weber\\Professor Laura DeMarco}

\degree{Bachelor of Arts with Honors}
\field{Mathematics}
\degreemonth{March}
\degreeyear{2025}
\department{Mathematics}

\begin{document}



\leavevmode\frontmatter
\setstretch{\dnormalspacing}


\setcounter{chapter}{-1}  
\begin{savequote}[90mm]
There is no branch of mathematics, however abstract, which may not someday be applied to the phenomena of the real world.
\qauthor{Nikolai Lobachevsky}
\end{savequote}

\chapter{Introduction}

Curvature is not limited to surfaces or manifolds and graphs are not limited to being undirected. This thesis is an exploration of a curvature notion defined for general metric spaces such as Riemannian manifolds and graphs called Ollivier-Ricci curvature. Introduced by Yann Ollivier and later extended by Shing-Tung Yau with colleagues Yong Lin and Linyuan Lu, this form of curvature builds upon the curvature tensor introduced by Gregorio Ricci-Curbastro in the 17th century through the contraction of the Riemann curvature tensor. Unlike classical Ricci curvature which depends on tensor calculus and is defined for Riemannian manifolds, the Ollivier-Ricci curvature between two points depends solely on the ratio between an optimal transport cost metric and the distance between the two points, allowing application to new metric spaces and non-derivative computation on Riemannian manifolds. The goal of this thesis is to present a rigorous exposition on how Ollivier-Ricci curvature both extends and modifies classical Ricci curvature for application on general metric spaces such as graphs.

Ollivier-Ricci curvature is defined simply as follows:

$$
\operatorname{Ric}(x,y) := 1 - \frac{W_1(m_x,m_y)}{d(x,y)}
$$

\noindent 
where $W_1$ is the 1-Wasserstein optimal transport distance between probability measures $m_x$ and $m_y$ and $d(x,y)$ is the distance between two points $x$ and $y$ in some metric space. We will discuss in detail these terms and this novel form of Ricci curvature in beginning of thesis in Chapters 1 and 2. By any reader familiar with differential or Riemannian geometry, this definition of curvature bears no resemblance to  classical Ricci curvature. This is because Ollivier did not derive this notion from the Ricci tensor but instead synthetically defined it such that it preserves various properties when negative, positive, and zero that are characteristic of negative, positive, and zero Ricci curvature.

\begin{figure}
    \centering
    \includegraphics[width=0.9\linewidth]{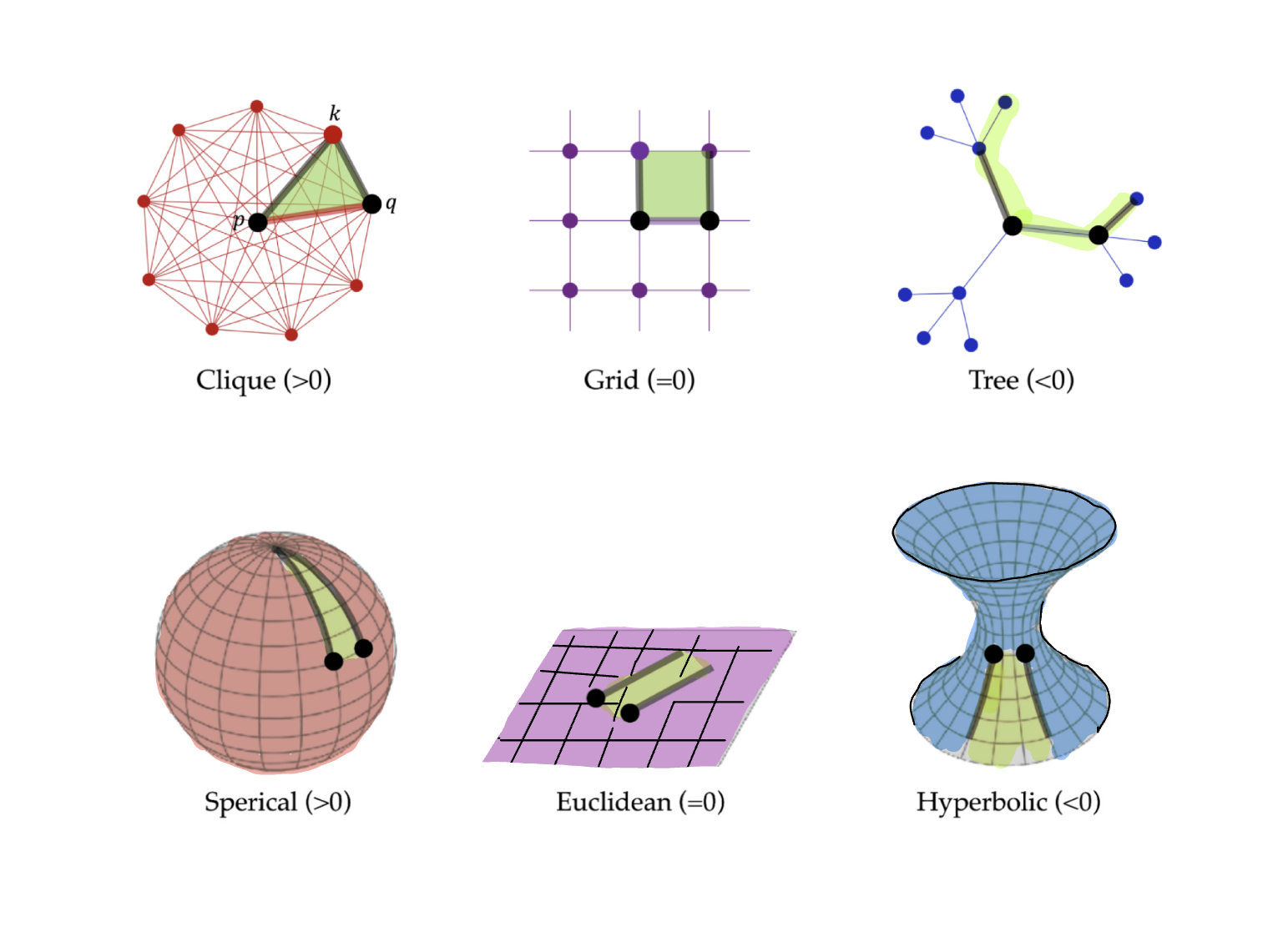}
    \caption{Figure adapted from Michael Bronstein (Over-squashing, bottlenecks, and graph Ricci curvature) representing negative, zero, and positive Ricci curvature analogies between manifolds and graphs. Ricci curvature captures geodesic dispersion, which can be modeled by assessing the behavior of parallel geodesics.}
    \label{fig:bronstein-ricci-analogy}
\end{figure}

Various questions tend to arise by those who are newly acquainted with Ollivier-Ricci curvature. In particular, regarding Ollivier-Ricci curvature's highly studies extension to graphs, one such question is often this: why is Ricci curvature used and not some other form of curvature? Indeed there are other forms of curvature which we will discuss: from Riemannian to sectional to scalar, or Gaussian curvature, there are other forms of both highly related and particularly distinct forms of curvature that are well studied aside from Ricci. The answer to this question is that Ricci curvature is of particular interest on manifolds, metric spaces, and and graphs for the study of localized curvature from a single point or on a single edge, for example. In particular, though, the Ricci tensor captures the extent to which the geodesic ball in a curved Riemannian manifold deviates from that of a ball in Euclidean space. Ricci curvature controls geodesic dispersion and volume growth, and this control or bounding of such growth that Ricci curvature provides allows for natural extension to graphs. A demonstrated in Figure 1, positive Ricci curvature is characterized by convergence of the parallel geodesics as they move along a manifold. Analogously on graphs of positive Ricci curvature this is captured by a highly connected graph cluster. Locally flat or Euclidean surface will have zero curvature by nature of the geodesics remaining parallel, which is also captured by a grid. Negative Ricci curvature of hyperbolic surfaces in which parallel geodesics diverge is captured analogously by the geodesic dispersion in trees. 

It is important to note that there are other forms of Ricci curvature that have been developed in recent years with some notable example being Bakry-Emery as an extension of Ricci curvature on smooth manifolds, and Forman's Ricci curvature defined for cell complexes. \cite{najman_modern_2017} This thesis focuses only on Ollivier-Ricci curvature and it's relation to classical Ricci curvature on a Riemannian manifold. The outline of this thesis chapter-by-chapter is now presented.

\section*{Thesis Outline}

\textit{Chapter 1: The 1-Wasserstein Distance}\\

This chapter functions to briefly introduce the major terminology and background required to understand the 1-Wasserstein distance metric for optimal transport as is used in the definition of Ollivier-Ricci curvature. We discuss preliminaries in optimal transport theory including the Monge and Kantorovich optimal tranport problem formulations which seek to determine the optimal cost associated with moving particles of mass from one distribution to another. We then introduce the fundamental Kantorovich-Rubinstein duality and finally the 1-Wasserstein distance and various properties, including proof of its form as a proper distance metric, and examples for transport on graphs. \\
\vspace{0.5cm}
\noindent
\textit{Chapter 2: Ollivier-Ricci Curvature on Metric Spaces}\\
This chapter provides an intensive exposition of Ollivier-Ricci curvature on general metric spaces,  with rigorous connection between the curvature properties of classical Ricci curvature as derived from the Riemann curvature tensor. Some of the results we discuss include variants of the $L^1$ and $L^2$ Bonnet-Myers theorems and proofs, as well as a discussion of our properties of the Levy-Gromov inequality hold for Ollivier-Ricci curvature. In this chapter we examine extensively the probability measures used to define the inputs of the 1-Wasserstein distance and ultimately the Ollivier-Ricci curvature, and furthermore, study the way in which random walks are deeply connected to this curvature notion on in both continuous and discrete settings.  
\\
\vspace{0.5cm}
\noindent
\textit{Chapter 3: Lin-Lu-Yau Extension to Undirected Graphs and Combinatorial Bounds}\\

This chapter is dedicated to the work of Yong Lin, Linyuan Lu, and Shing-Tung Yau in extending Ollivier-Ricci curvature to undirected graphs. We discuss various theoretical results presented by Lin-Lu-Yau, and their novel notion of Ricci curvature based off of Ollivier-Ricci, which is called Lin-Lu-Yau curvature. Finally, we discuss the work of J$\ddot{u}$rgen Jost and Shiping Liu in proving various combinatorial bounds for Ollivier-Ricci curvature which relies upon various combinatorial strategies to estimating transport cost between vertices and their neighbors. We will explore various proofs they present and their implications.\\
\vspace{0.5cm}
\noindent
\textit{Chapter 4: Considerations for Ollivier-Ricci Curvature of Directed Graphs}
Little research has been done for Ollivier-Ricci curvature on directed graphs. In this chapter, we seek to investigate this gap, and present some novel ideas and considerations for the extension of Ollivier-Ricci curvature to directed graphs, with a proposal for theoretical bounds involving Ollivier-Ricci curvature on edges within $k-$cycles, as well as considerations for out-branching and in-branching directed graph trees. In particular, we discuss a major challenge in this extension being the asymmetry of directed graphs and restriction of walks depending on specified directionalities. At the beginning of this chapter we introduce various essential background on directed graphs. \\
\vspace{0.5cm}
\noindent
\textit{Chapter 5: Curvature Informed Algorithms on Graphs and Networks}
The final chapter of this thesis explores how Ollivier-Ricci curvature for graphs has been utilized for various algorithms in applied research for network science and graph machine learning, including to application to the study of large networked data and graph neural networks. To this end, we introduce the primary two classes of graph algorithms in which curvature is applied, namely community detection or clustering, and graph rewiring which has applications in graph neural networks. \\
\vspace{0.5cm}
\textit{Appendix:} In the writing of this thesis, increasing amounts of background information were moved to the Appendix for reference so as to focus on the results of Ollivier-Ricci curvature. In this thesis we assume knowledge of basic Riemannian geometry, topology, graph theory, measure theory, theory of metric spaces, and various other advanced undergraduate or graduate-level concepts. However, the appendix provides various relevant exposition, definitions, and background information on these topics necessary for the understanding of the thesis for any curious reader.

\chapter{The 1-Wasserstein Distance}

We begin by discussing briefly the classical problem in probability theory of \textit{optimal transport}. French mathematician Gaspard Monge from the 16th century is credited with being the first to ask how to optimally move a particle from one mound to another, which was later formalized as being a problem of how to optimally transport a mass from one probability distribution to another. In order define to Ollivier-Ricci curvature upon which this thesis is based, it will be necessary for us to introduce fundamental results within the field of optimal transport theory. A major limitation of the formalized version of Monge's formulation is that mass cannot be split, such that each particle remains whole in the target distribution. Mathematician Leonid Kantorovich recognized this issue and introduced a new formulation in which mass splitting is possible, while still observing conservation of mass. A student of Kantorovich named Rubinstein contributed to this work on optimal transport theory, and this distance metric which will be heavily used in this paper for optimal transport for calculated transport cost, called the the Wasserstein Distance, is sometimes called the Kantorovich-Rubinstein distance. We discuss this background and prepare for the next chapter in which we discuss Ollivier-Ricci curvature and its dependence on the 1-Wasserstein distance.

\section*{Conventions}

We will use $C(X)$ to denote as the vector space of continuous functions $f: X \rightarrow \mathbb{R}$. We will denote the subspace of bounded continuous functions as $C_b(X)$.  For an open set $\Omega \subset \mathbb{R}^n$ and $k \in \mathbb{N}$, then $C^k(\Omega)$ is the set of functions with $k$ continuous derivatives in $\Omega$. Any function $f \in C^k(\overline{\Omega})$ denotes the restriction of a function in $C^k(\mathbb{R}^n)$ to $\overline{\Omega}$. $C_0(X)$ the space of continuous functions converging to $0$ at infinity. We will denote $C^\infty(\Omega)$ as the intersection of $C^k(\Omega)$ for $k \in \mathbb{N}$ and  $C^\infty_c(\Omega)$ for  compact support.

The space of Lipschitz functions will denoted as $Lip(X)$ and the subspace of bounded Lipschitz functions will be denotes as $Lip_b(X)$. The Lipschitz constant will be denoted $Lip(f)$ defined as follows: 
$$
Lip(f) := \sup_{x \neq y}\frac{|f(x) - f(y)}{|x-y|}
$$
A map $f$ between metric spaces is said to be $C$-Lipschitz if $d'(f(x), f(y)) \leq C d(x,y)$ for all $x, y$.



In a metric space $(X, d)$, $\mathcal{B}(X)$ denotes its Borel $\sigma$-algebra and $\mathcal{M}(X)$ the set of the $\sigma$-additive functions $\mu : \mathcal{B}(X) \to \mathbb{R}$. Furthermore, we denote by
\[
\mathcal{M}_+(X) := \{ \mu \in \mathcal{M}(X) : \mu \geq 0 \}, \quad \mathcal{P}(X) := \{ \mu \in \mathcal{M}_+(X) : \mu(X) = 1 \}
\]
the subsets of nonnegative and probability measures, respectively. We will use $\mathcal{L}^n$ to denote the Lebesgue measure in $\mathbb{R}^n$. $L^p$ is the Lebesgue space of exponent $p$.
The space of Borel probability measures on $X$ is denoted by $P(X)$. The weak topology on $P(X)$ is induced by convergence against $C_b(X)$, i.e., bounded continuous test functions. If $\mu \in P(X)$ and $\nu \in P(Y)$, then $\Pi(\mu, \nu)$ is the set of all joint probability measures on $X \times Y$ whose marginals are $\mu$ and $\nu$.
All measures of this chapter are Borel measures on Polish spaces.  We let $law(X)$ denote the law of a random variable $X$ defined on a probability space $(\Omega, \mathcal{P})$, equivalent to $X_{\#}\mathcal{P}$.

We define $W_p$ is the Wasserstein distance of order $p$ and $\delta_x$ is the Dirac mass at point $x$.

\begin{definition} (Polish space)
A topological space $(X, \tau )$ is Polish if it is
separable and completely metrizable.     
\end{definition}
\begin{definition} (Push Forward Operator and Measure) 
For a given Borel function $\phi: X \rightarrow Y$, the push forward operator $f_{\#}$ from $\mathcal{M}(X) \rightarrow \mathcal{M}(Y)$ is defined for all $B \in \mathcal{B}(Y)$ as
$$
f_{\#}\mu(B) := \mu(f_{-1}(B))
$$
where the measure $f_{\#}\mu$ is called the push forward measure
\end{definition}
Before we continue, we will discuss the inuition behind the push forward operator and measure. 
\begin{proposition}(Change of variables) \cite{lecturesOT} Given any Borel functions defined as follows:  
\begin{align*}
    f: X \rightarrow Y \\
    g: Y \rightarrow [0, \infty ]
\end{align*}
Then, 
$$
\int_Y gd f_{\#}\mu = \int_X (g \circ f)d\mu
$$
\end{proposition}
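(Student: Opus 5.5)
The plan is the standard measure-theoretic bootstrap: check the identity on indicator functions, extend by linearity to nonnegative simple functions, and pass to general nonnegative Borel $g$ by monotone convergence. Throughout, $\mu\in\mathcal{M}_+(X)$ and $f_{\#}\mu(B)=\mu(f^{-1}(B))$ as in the definition of the push forward measure. Note that $g\circ f$ is Borel on $X$, since for any Borel $A\subset[0,\infty]$ we have $(g\circ f)^{-1}(A)=f^{-1}(g^{-1}(A))$ and both $f$ and $g$ are Borel. So both sides of the identity are well defined in $[0,\infty]$.

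\textbf{Step 1 (indicators).} Let $g=\mathbf{1}_B$ with $B\in\mathcal{B}(Y)$. Then $g\circ f=\mathbf{1}_{f^{-1}(B)}$, so
\[
\int_Y \mathbf{1}_B\, d f_{\#}\mu = f_{\#}\mu(B)=\mu(f^{-1}(B))=\int_X \mathbf{1}_{f^{-1}(B)}\,d\mu=\int_X (\mathbf{1}_B\circ f)\,d\mu .
\]
This is exactly the definition of the push forward.

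\textbf{Step 2 (simple functions).} For a nonnegative simple function $g=\sum_{i=1}^k a_i\mathbf{1}_{B_i}$, both sides are linear in $g$ with nonnegative coefficients. Moreover, $(\sum a_i\mathbf{1}_{B_i})\circ f=\sum a_i\mathbf{1}_{f^{-1}(B_i)}$. The identity therefore follows from Step 1.

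\textbf{Step 3 (general nonnegative $g$).} Choose nonnegative simple Borel functions $g_n\uparrow g$ pointwise on $Y$, for instance the standard dyadic truncations $g_n=\min\{n,\,2^{-n}\lfloor 2^n g\rfloor\}$, which also handle the value $+\infty$. Then $g_n\circ f$ are simple on $X$ and increase pointwise to $g\circ f$. Applying the monotone convergence theorem on $(Y,f_{\#}\mu)$ and on $(X,\mu)$ lets us pass to the limit on both sides of the Step 2 identity for $g_n$, which gives the claim.

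The only point needing care is Step 3. We must make sure the approximating sequence is Borel and monotone, that composing with $f$ preserves these properties, and that $g$ is allowed to take the value $\infty$. The dyadic construction handles all of this, and composition trivially preserves pointwise monotone convergence. If one wants the formula for signed $g$, it extends afterwards by splitting $g=g^+-g^-$, whenever either side is finite.
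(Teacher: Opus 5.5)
Your proof is correct and complete. It is the standard bootstrap: indicators, then nonnegative simple functions, then monotone convergence, with the dyadic truncations $\min\{n, 2^{-n}\lfloor 2^n g\rfloor\}$ correctly handling the value $+\infty$. The paper gives no proof of this proposition and only cites \cite{lecturesOT}, so there is no competing argument to compare against.

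Two small remarks. First, you were right to state explicitly that $\mu \in \mathcal{M}_+(X)$. By the paper's conventions the push forward is defined on $\mathcal{M}(X)$, which contains signed measures, and for a signed $\mu$ with $g$ valued in $[0,\infty]$ the right-hand side can become $\infty - \infty$. So nonnegativity of $\mu$ is a genuine implicit hypothesis.

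Second, your closing remark about signed $g$ is cleanest if you apply the nonnegative case to $|g|$. This gives $\int_Y |g|\, d f_{\#}\mu = \int_X |g\circ f|\, d\mu$, hence $g \in L^1(f_{\#}\mu)$ if and only if $g\circ f \in L^1(\mu)$. That is exactly the integrability equivalence the paper records right after the proposition, and the identity for $g$ then follows by splitting $g = g^+ - g^-$.
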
 
Where therefore we have $g \circ f$ is $\mu$-integrable i.f.f. $\phi: X \rightarrow \overline{\mathbb{R}}$ is $f_{\#}\mu$-integrable.

\begin{definition} (Coupling of Measures) \cite{VillaniCedric2009Ot:o}
Let $(X, \mu)$ and $(Y, \nu)$ be probability spaces. Coupling the measures $\mu$ and $\nu$ is defined as constructing a measure $\pi$ on $X \times Y$ such that $\pi$ admits $\mu$ as a \textit{marginal} on $X$ and admits $\nu$ as a \textit{marginal} on $Y$.
\end{definition}
In other words, to couple measures $\mu$ and $\nu$, we construct random variables $X$ and $Y$ on some probability space $(\Omega, \mathcal{P})$ such that $X_{\#}\mathcal{P} = \mu$ and $Y_{\#}\mathcal{P} = \nu$. For example, the condition above is met when for all measurable sets $M_1 \subset X$ and $M_2 \subset Y$, we obtain $\pi[X \times M_2] = \nu[M_2]$ and $\pi[M_1 \times Y] = \mu[M_1]$. Equivalently the marginal condition is also met when for all $\pi[X \times M_2] = \nu[M_2]$ integrable measurable functions $f,g$ on $X,Y$ we get
$$
\int_X f d\mu + \int_Y g d\nu = \int_{X \times Y}(f(x) + g(y)) d\pi (x,y)
$$
\begin{definition}
Let $(X, \mu)$ and $(Y, \nu)$ be probability spaces. A coupling $(X,Y)$ is a \textit{deterministic coupling} if there exists a measurable function $T$ such that $T: X \rightarrow Y$ such that $Y = T(X)$. By definition, $(X,Y)$ being a deterministic coupling is an equivalent statement to $(Id, T)_{\#}\mu = \pi$, and is also equivalent to achieving the following when $X_{\#}\mathcal{P}$ and $Y = T(X)$: 
\begin{enumerate}
    \item $T_{\#} \mu = \nu$ 
    \item $\int_X f(T(x))d\mu(x)  = \int_Y f(y)d\nu(y)$
\end{enumerate}
for $\nu$-integrable functions $f$.

We will be referring to the map $T$ going forward as a \textit{transport map} as classically done in optimal transport theory. We will be using $T$. Mass can be represented by a measure $\mu$. We say $T$ transports mass from $\mu$ to $\nu$, and therefore $T$ is a map transporting mass map between measures. 
\begin{theorem} (Existence of optimal coupling \cite{VillaniCedric2009Ot:o})
Let $(X, \mu)$ and $(Y, \nu)$ be distinct Polish probability spaces. Define
$f : X \to \mathbb{R} \cup \{-\infty\}$ and
$g : Y \to \mathbb{R} \cup \{-\infty\}$ as upper semicontinuous functions whereby $f \in L^1(\mu)$, $g \in L^1(\nu)$.

Define $c : \mathcal{X} \times \mathcal{Y} \to \mathbb{R} \cup \{+\infty\}$ 
as a lower semicontinuous cost function, such that 
$$c(x,y) \geq f(x) + g(y), \forall x, y$$ Then there exists a coupling of $(\mu, \nu)$ that minimizes the total cost  $c(X,Y)$ for all possible couplings $(X,Y)$.
\end{theorem}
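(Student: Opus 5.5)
The plan is the direct method of the calculus of variations on the set of couplings $\Pi(\mu,\nu)$. Write $I(\pi) := \int_{X\times Y} c\,d\pi$. Since $c$ is lower semicontinuous and bounded below by $f(x)+g(y)$, the quantity $I(\pi)$ is well defined in $\mathbb{R}\cup\{+\infty\}$ for every coupling. Indeed, $c - (f\oplus g) \ge 0$ is measurable and nonnegative, and $\int (f\oplus g)\,d\pi = \int_X f\,d\mu + \int_Y g\,d\nu$ is finite by the marginal identity stated after the definition of coupling. Note that $\Pi(\mu,\nu)$ is nonempty, since it contains $\mu\otimes\nu$. If $\inf_\pi I(\pi) = +\infty$, every coupling is optimal, so we may assume the infimum is finite.

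\textbf{Step 1: compactness.} We show $\Pi(\mu,\nu)$ is tight, hence weakly relatively compact by Prokhorov's theorem. On a Polish space every single probability measure is tight (Ulam's theorem). So for $\varepsilon>0$ there are compact sets $K\subset X$ and $L\subset Y$ with $\mu(X\setminus K)<\varepsilon$ and $\nu(Y\setminus L)<\varepsilon$. Then for every $\pi\in\Pi(\mu,\nu)$,
\[
\pi\bigl((X\times Y)\setminus(K\times L)\bigr) \le \mu(X\setminus K)+\nu(Y\setminus L) < 2\varepsilon .
\]
We also need $\Pi(\mu,\nu)$ to be weakly closed. The marginal maps are continuous under weak convergence, because testing against $\varphi(x)$ with $\varphi\in C_b(X)$ is testing against a bounded continuous function on $X\times Y$. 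Hence any minimizing sequence $\pi_k$ has a subsequence converging weakly to some $\pi\in\Pi(\mu,\nu)$.

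\textbf{Step 2: lower semicontinuity of $I$.} This is the main obstacle, since $c$ is neither bounded nor continuous and $f,g$ are merely upper semicontinuous and integrable. I would handle it as follows. Set $h := f\oplus g$, so that $c-h\ge 0$ and $c-h$ is lower semicontinuous, because $-h$ is lower semicontinuous. A nonnegative lower semicontinuous function on a metric space is an increasing limit of bounded Lipschitz, hence bounded continuous, functions $c_\ell$; one can take inf-convolutions truncated at $\ell$. Then for each $\ell$,
\[
\int c_\ell\,d\pi = \lim_k \int c_\ell\,d\pi_k \le \liminf_k \int (c-h)\,d\pi_k ,
\]
and monotone convergence gives $\int (c-h)\,d\pi \le \liminf_k \int (c-h)\,d\pi_k$. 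The term $\int h\,d\pi_k = \int f\,d\mu + \int g\,d\nu$ is the same constant for every coupling. Adding it back therefore yields $I(\pi)\le\liminf_k I(\pi_k)$.

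\textbf{Step 3: conclusion.} Combining the two steps, $I(\pi)\le \inf_{\Pi(\mu,\nu)} I$, so $\pi$ is an optimal coupling. Realizing $\pi$ as the law of a pair $(X,Y)$ on $(X\times Y,\pi)$ via the coordinate projections gives the minimizing coupling in the sense of the definition.

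The only delicate points are that the lower bound $c\ge f\oplus g$ is exactly what makes $I$ well defined and lets us subtract a coupling-independent constant, and the approximation of the lower semicontinuous function $c-h$ from below by bounded continuous functions.
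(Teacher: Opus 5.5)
Your argument is correct and follows the standard proof in the source the paper cites (Villani, Theorem 4.1). It has the same ingredients: tightness of $\Pi(\mu,\nu)$ with Prokhorov and weak closedness of the marginal constraints, then lower semicontinuity of $\pi\mapsto\int c\,d\pi$ by subtracting the coupling-independent term $\int (f\oplus g)\,d\pi=\int f\,d\mu+\int g\,d\nu$ and approximating the nonnegative lower semicontinuous function $c-f\oplus g$ from below by bounded continuous functions; the paper itself states the theorem only with this citation and gives no proof of its own.
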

\begin{theorem} (Optimal cost convexity \cite{VillaniCedric2009Ot:o})
Let $(X, \mu)$ and $(Y, \nu)$ be distinct Polish probability spaces. Define
$c : X \times Y \rightarrow \mathbb{R} \cup +\infty\}$ to be a lower semicontinuous function. On $\mathcal{P}(X) \times \mathcal{P}(Y)$ let the optimal transport cost functional be $C$.

Define $c : \mathcal{X} \times \mathcal{Y} \to \mathbb{R} \cup \{+\infty\}$ 
as a lower semicontinuous cost function, such that 
$$c(x,y) \geq f(x) + g(y), \forall x, y$$ Then there exists a coupling of $(\mu, \nu)$ that minimizes the total cost  $c(X,Y)$ for all possible couplings $(X,Y)$.
\end{theorem}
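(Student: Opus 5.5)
The conclusion of the statement as written is the existence of an optimal coupling, with the same hypotheses as the preceding theorem; I read $f \in L^1(\mu)$ and $g \in L^1(\nu)$ as the upper semicontinuous functions implicitly carried over from it. I would use the direct method of the calculus of variations on $\Pi(\mu,\nu)$ with the weak topology, in three steps: compactness of $\Pi(\mu,\nu)$, lower semicontinuity of the cost functional, and extraction of a minimizer. At the end I would add the convexity remark suggested by the theorem's title.

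\textbf{Step 1 (compactness).} Since $X$ and $Y$ are Polish, the single measures $\mu$ and $\nu$ are tight. Given $\varepsilon>0$, pick compact sets $K_X, K_Y$ with $\mu(X\setminus K_X)<\varepsilon$ and $\nu(Y\setminus K_Y)<\varepsilon$. Then for every $\pi \in \Pi(\mu,\nu)$,
\[
\pi\big((X\times Y)\setminus (K_X\times K_Y)\big) \le \mu(X\setminus K_X)+\nu(Y\setminus K_Y) < 2\varepsilon .
\]
So $\Pi(\mu,\nu)$ is tight, and by Prokhorov's theorem it is relatively compact. It is also weakly closed, because the marginal constraints $\int \varphi(x)\,d\pi=\int\varphi\,d\mu$ and $\int\psi(y)\,d\pi=\int\psi\,d\nu$ for $\varphi\in C_b(X)$, $\psi\in C_b(Y)$ pass to weak limits. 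Hence $\Pi(\mu,\nu)$ is weakly compact, and it is nonempty since it contains $\mu\otimes\nu$.

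\textbf{Step 2 (lower semicontinuity).} Write $c = h + (f\oplus g)$ with $h := c - f(x) - g(y) \ge 0$.
\begin{itemize}
\item Since $f$ and $g$ are upper semicontinuous and $c$ is lower semicontinuous, $h$ is a nonnegative lower semicontinuous function, where it is defined.
\item The places where $f(x)$ or $g(y)$ equals $-\infty$ are $\mu$- or $\nu$-null by integrability, so they can be absorbed by setting $h=+\infty$ there.
\item A nonnegative lower semicontinuous $h$ is the increasing limit of bounded continuous functions $h_k$ (for instance the inf-convolutions $h_k(z)=\inf_w\{\min(h(w),k)+k\,d(z,w)\}$).
\item By monotone convergence, $\pi\mapsto\int h\,d\pi = \sup_k \int h_k\,d\pi$ is a supremum of weakly continuous functionals, hence weakly lower semicontinuous.
\item The remaining term satisfies $\int (f(x)+g(y))\,d\pi = \int f\,d\mu+\int g\,d\nu$ for every $\pi\in\Pi(\mu,\nu)$. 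It is a finite constant on $\Pi(\mu,\nu)$, and in particular $\int c\,d\pi$ is well defined in $\mathbb{R}\cup\{+\infty\}$.
\end{itemize}
Therefore $\pi\mapsto\int c\,d\pi$ is weakly lower semicontinuous on $\Pi(\mu,\nu)$.

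\textbf{Step 3 (minimizer).} Take a minimizing sequence $\pi_n$. By Step 1 a subsequence converges weakly to some $\pi^*\in\Pi(\mu,\nu)$. By Step 2, $\int c\,d\pi^*\le\liminf\int c\,d\pi_n=\inf_{\Pi(\mu,\nu)}\int c\,d\pi$. Realizing $\pi^*$ as the law of a pair $(X,Y)$ gives the optimal coupling. For the convexity in the title, take near-optimal $\pi_1\in\Pi(\mu_1,\nu_1)$ and $\pi_2\in\Pi(\mu_2,\nu_2)$. Then $t\pi_1+(1-t)\pi_2\in\Pi(t\mu_1+(1-t)\mu_2,\,t\nu_1+(1-t)\nu_2)$, and linearity of $\pi\mapsto\int c\,d\pi$ gives $C(t\mu_1+(1-t)\mu_2,t\nu_1+(1-t)\nu_2)\le tC(\mu_1,\nu_1)+(1-t)C(\mu_2,\nu_2)$.

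The main obstacle is Step 2. Because $c$ may be unbounded below, $\int c\,d\pi$ is not automatically lower semicontinuous. The splitting $c=h+f\oplus g$ is exactly where the hypothesis $c\ge f+g$ is used, and care is needed that the $-\infty$ values of $f$ and $g$ do not create ill-defined integrals.
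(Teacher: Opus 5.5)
Your proposal is correct. It is essentially the standard proof from the source the thesis cites. The thesis gives no argument of its own: the printed statement is a garbled duplicate of the preceding existence theorem, with $f,g$ left undefined, and you read it the right way. Villani proves existence exactly as you do, via tightness of $\Pi(\mu,\nu)$ and Prokhorov, then lower semicontinuity of $\pi\mapsto\int c\,d\pi$ obtained by subtracting the integrable lower bound $f\oplus g$ and approximating the nonnegative lower semicontinuous remainder from below by bounded continuous functions.

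Your convexity addendum is the finite-mixture case of Villani's convexity theorem for the optimal cost; for general mixtures that theorem needs a measurable selection of optimal plans, which you avoid. It is fine provided the integrable lower bound $c\ge f\oplus g$ holds for each of $\mu_1,\mu_2,\nu_1,\nu_2$, not just the original $\mu,\nu$, so that $C$ is well defined and $\pi\mapsto\int c\,d\pi$ is affine on the mixture.
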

\begin{proposition}
Optimal couplings always exist by Theorem 2.2.6. However, deterministic couplings do not always exist. \cite{VillaniCedric2009Ot:o}
\end{proposition}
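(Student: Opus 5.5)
The statement has two parts. First, optimal couplings always exist, which follows from the existence theorem stated above. Second, deterministic couplings need not exist, which I will show with an explicit counterexample.

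For existence, I apply the existence-of-optimal-coupling theorem directly. Let $c$ be any lower semicontinuous cost that is bounded below, for instance a metric $d$ or any nonnegative cost. Take $f \equiv 0$ and $g \equiv 0$, or constants equal to the lower bound of $c$. These are upper semicontinuous and lie in $L^1(\mu)$ and $L^1(\nu)$, and they satisfy $c(x,y) \geq f(x)+g(y)$, so the hypotheses of the theorem hold. It is also worth recording why the admissible set is nonempty: the product measure $\mu\otimes\nu$ always belongs to $\Pi(\mu,\nu)$. The minimisation is therefore over a nonempty set, and the theorem supplies a minimiser.

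For non-existence, I take $X = Y = \mathbb{R}$ with the usual metric. Let $\mu = \delta_0$ and $\nu = \tfrac12\delta_{-1} + \tfrac12\delta_{1}$. Suppose $T$ is any measurable map. Then $T_{\#}\delta_0 = \delta_{T(0)}$, because $T_{\#}\delta_0(B) = \delta_0(T^{-1}(B)) = \mathbf{1}_B(T(0))$. This is a Dirac mass, and $\nu$ is not a Dirac mass, since it gives mass $\tfrac12$ to two distinct points. Hence $T_{\#}\mu \neq \nu$ for every $T$. By the characterisation $(Id,T)_{\#}\mu = \pi$ of deterministic couplings given in the definition above, no coupling of $(\mu,\nu)$ is deterministic.

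The coupling set is still nonempty: in this example it consists of the single element $\delta_0\otimes\nu$. That element is therefore the optimal coupling, but it is not deterministic. This shows directly that the existence of an optimal coupling does not imply the existence of a deterministic one. In other words, Kantorovich's relaxation genuinely enlarges Monge's problem, because mass splitting is needed.

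The only real care point is the push-forward computation, together with the observation that a single atom cannot be split by a map. Everything else is a direct check of the hypotheses of the earlier theorem.
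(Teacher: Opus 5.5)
Your proposal is correct and follows the same approach as the paper. The paper gives no argument of its own beyond invoking the existence theorem and citing Villani, and Villani's justification is precisely your two steps: checking the theorem's hypotheses with constant $f,g$, and observing that $T_{\#}\delta_0=\delta_{T(0)}$ can never equal a non-Dirac target such as $\tfrac12\delta_{-1}+\tfrac12\delta_1$. You were also right to read ``always'' as ``for every lower semicontinuous cost bounded below,'' since that is the only setting in which the existence theorem applies.
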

As such, transport maps do not always exist. We will formally define transport maps in the next section and relate it to Monge's formulation for the optimal transport problem.
\end{definition}
\begin{definition}
The map $T: X \rightarrow Y$ is defined as a \textit{transport map} which transports $\mu \in \mathcal{P}(X)$ to $\nu \in \mathcal{P}(Y)$ if $\nu(B) = \mu(T^{-1} B))$ for all $\nu$-measurable sets B. 
\end{definition}
Again, recall that if $\nu(B) = \mu(T^{-1} B))$ for all $\nu$-measurable sets $B$, then we can write $\nu = T_{\#}\mu$. By Proposition 2.28, for any given probability measure $\mu$ and $\nu$ the transport map $T$ such that $\nu = T_{\#}\mu$ may not exist.

\begin{figure}[htbp]
    \centering
    \makebox[\textwidth]{
        \begin{minipage}{0.5\textwidth}
            \centering
            \includegraphics[width=\textwidth,keepaspectratio]{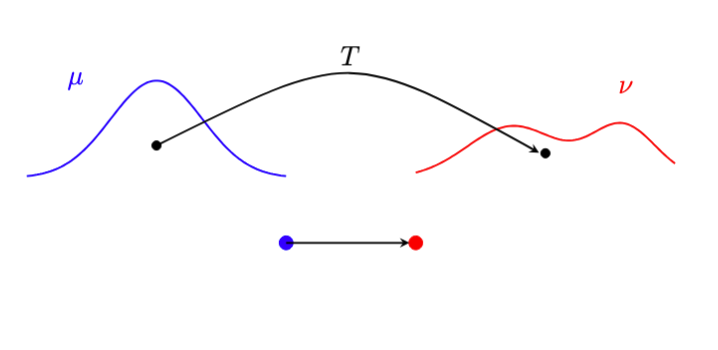}
            \subcaption{Monge Formulation}
            \label{fig:left_graph}
        \end{minipage}
        \hfill
        \begin{minipage}{0.5\textwidth}
            \centering
            \includegraphics[width=\textwidth,keepaspectratio]{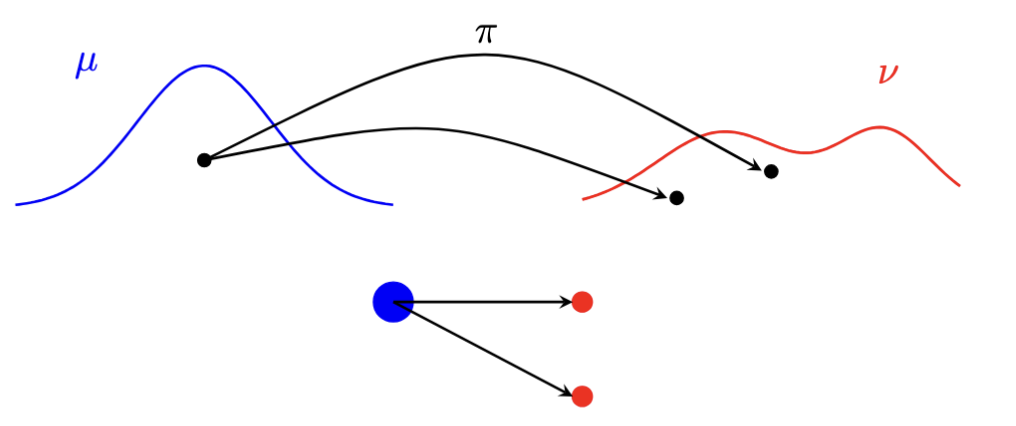}
            \subcaption{Kantorovich Formulation}
            \label{fig:right_graph}
        \end{minipage}
    }
    \caption{Mass transport in Monge Formulation vs. Kantorovich Relaxation}
    \label{fig:side_by_side_graphs}
\end{figure}
\begin{figure}
    \centering
    \includegraphics[width=0.5\linewidth]{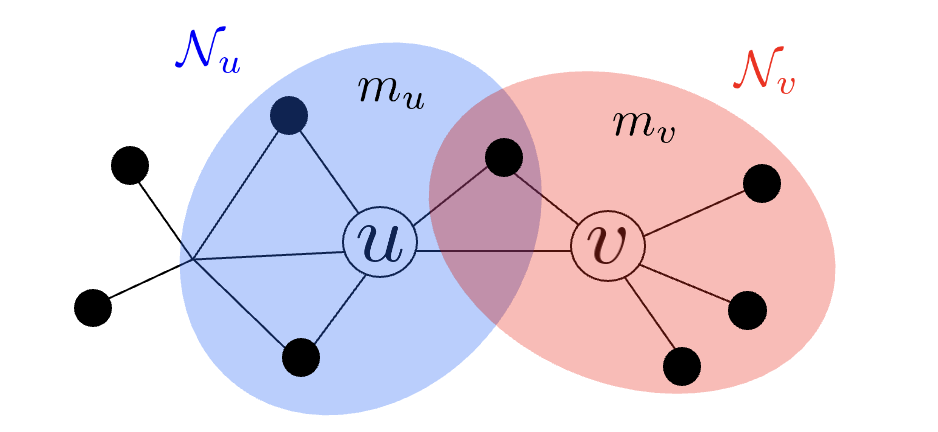}
    \caption{Neighborhood of $u$ in green, neighborhood of $v$ in blue. Each neighboor is endowed with measures $m_u$ and $m_v$ respectively for a graph $G$.}
    \label{fig:enter-label}
\end{figure}

\section*{Optimal Transport Problem Formulations}
The problem of optimal transport as proposed by Monge is the problem of finding a way to minimize the transportation cost of moving mass from one distribution of mass to another. His problem was later formalized in terms of probability theory where we refer to a given mass by a measure like $\mu$ and its mass distribution by a probability distribution $\mathcal{P}(X)$ for some random variable $X$. Then for $\mu \in \mathcal{P}(X)$ and $\nu \in \mathcal{P}(Y)$, a Borel cost function represents the cost of moving a unit of mass from point $x$ to a point $y$ defined as
$$
c(x,y): X \times Y \rightarrow [0, \infty]
$$
We will formalize this below. 
\begin{definition} \cite{VillaniCedric2009Ot:o} (Monge Optimal Transport Problem):
If we are given probability measures $\mu \in \mathcal{P}(X), \nu \in \mathcal{P}(Y)$ then Monge's problem is formulated as follows: 
$$
inf \int_X c(x,T(x))d\mu(x)
$$
where 
over all Borel $\mu$-measurable transport maps $T: X \rightarrow Y$ for which $\nu = T_{\#}\mu$.
\end{definition}
The cost function $c$ defined in this problem was originally defined as $L^1$ cost where $(c,x) = |x-y|$ and is more difficult to solve with $L^2$ cost where $(c,x) = |x-y|^2$. 
We will now interpret the Monge formulation in terms of mass transfer in the discrete setting. In this formulation, mass is not split as we are defining maps $x \mapsto  T(x)$. If we are given $X = \{x_1, \dots, x_m\}$, with a probability distribution $\mu = (\mu_1, \dots, \mu_m)$, and set of \textit{target locations} $Y = \{y_1, \dots, y_n\}$, with a probability distribution $\nu = (\nu_1, \dots, \nu_n)$ we define our cost function as
$$
c: X \times Y \to \mathbb{R}^{\geq 0} 
$$
where $ c(x_i, y_j) $ represents the cost of moving mass from $ x_i $ to $ y_j $. The Monge problem is then formulated in the discrete case as:
\[
\min_{T} \sum_{x_i \in X} c(x_i, T(x_i)) \mu_i.
\]
\begin{example}
Define $X = \{x_1,...,x_k \}$ and $Y = \{y_1,...,y_k\}$ such that $|X|=|Y|=k$. Define $\delta$ as the Dirac measure or Dirac delta function in which $\delta_{x_i}$ places all mass at $x_i$. We can define a probability measure on $X$ as $\mu = \sum_{i = 0}^k \frac{1}{k} \delta_{x_i}$. Similarly we can define a probability measure on $Y$ as $\nu = \sum_{j = 0}^k \frac{1}{k} \delta_{y_i}$. Then we define our map $T: X \rightarrow Y$ where $T_{\#}\mu = \nu = \sum_{j = 0}^k \frac{1}{k} \delta_{y_i}$.
\end{example}

We now move on to Kantorovich's relaxation, also called Kantorovich's problem formulation. As previously discussed, it allows more flexibility in finding optimal transport plans because mass is allowed to be split and mapped to a variety of different locations in the target probability distribution. For example, on a graph such as that pictured in Figure 1.2. with vertices $u,v$ connected by an edge with common and non-common neighbors, then Kantorovich's relaxation of the optimal transport problem would allow mass assigned to a neighbor of $u$ to be split such that portions of this mass are mapped distributed to the three neighbors of $v$, so long as no mass is created or destroyed. We formalize this below. Furthermore, an essential note to make here is that we no longer use the term \textit{transport map} for this relaxation, but rather a \textit{transport plan} which is a matrix that assigns mass at a location on a starting probability distribution and systematically maps it to any number of possible split locations in the target probability distribution. Again, the cost of such transport is captured my the cost functions being applied to each of these distributions of mass. 

\begin{definition} (Kantorovich's Optimal Transport Problem) \cite{VillaniCedric2009Ot:o}:
If we are given probability measures $\mu \in \mathcal{P}(X), \nu \in \mathcal{P}(Y)$ then Kantorovich's problem is formulated as follows: 
$$
\inf \int_{X \times Y} c(x,y)d\pi(x,y)
$$
over the set of all transport plans between $\mu$ and $\nu$, i.e. over all $\pi \in \Pi(\mu, \nu)$.
\end{definition}

In the discrete case, suppose we are given $X = \{x_1, \dots, x_m\}$, with a probability distribution $\mu = (\mu_1, \dots, \mu_m)$, and set of \textit{target locations} $Y = \{y_1, \dots, y_n\}$, with a probability distribution $\nu = (\nu_1, \dots, \nu_n)$ and where $ c: X \times Y \to \mathbb{R}^{\geq 0} $, where $ c(x_i, y_j) $ represents the cost of moving mass from $ x_i $ to $ y_j $.

The Kantorvich problem involves finding a \textit{transport plan} matrix $ \pi \in \mathbb{R}^{m \times n}_{\geq 0} $ that satisfies:
\[ \sum_{j} \pi_{ij} = \mu_i, \quad \sum_{i} \pi_{ij} = \nu_j. \]
This allows mass from $x_i$ to be split among multiple $y_j$.

The \textit{Kantorovich problem} is then formulated as:
\[ \min_{\pi \geq 0} \sum_{i,j} c(x_i, y_j) \pi_{ij}. \]

The Monge and Kantorovich problem formulations simply formalize the problem of mass transport across distributions. The problem remains: how can we solve this problem and how can we do so in a way that is optimal? Our goal is to minimize the average cost of transporting mass distributed according to $\mu$ and $\nu$. The contribution of the Kantorovich-Rubenstein duality is the result that this minimization problem can actually be solved by maximizing for the difference in expected cost of any 1-Lipschitz function $f$ over the distributions $\mu$ and $\nu$.

\begin{theorem} (Kantorovich-Rubenstein Theorem) \cite{lecturesOT}
Let X and Y be Polish spaces with $\mu \in \mathcal{P}(X)$ and $\nu \in \mathcal{P}(Y)$. If the space $(X,d)$ is compact, then: 
$$
\inf_{\pi \in \mathcal{P(\mu, \nu)}} \int_{X \times X} d(x,y) \pi(dx dy) = \sup \{ \int_X f d\mu - \int_X f d\nu : \left \| f \right \|_L \leq 1\}
$$
where $\mu(A) = \pi(A \times X)$, $\nu(A) = \pi(X \times A)$ for all $A \in A \in \mathcal{B}(X)$, the set of all Borel measures on $X$. $\mathcal{P}(\mu, \nu)$ is the set of regular probability Borel measures $\pi$ on the topological product $X \times X$. We define $\left \| f \right \|_L$ for $f: X \rightarrow \mathbb{R}$ as follows: 
$$
\left \| f \right \|_L = sup\{ \frac{|f(x) - f(y)|}{d(x,y)} \mid x,y \in X \mid  x \neq y\}
$$
\end{theorem}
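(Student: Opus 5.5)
We prove the Kantorovich--Rubinstein duality in two halves: an easy inequality from integrating against couplings, and a hard inequality from a general duality plus the $c$-transform trick for a metric cost. Throughout, write $W(\mu,\nu)$ for the left-hand infimum and $S(\mu,\nu)$ for the right-hand supremum. Since $X$ is compact, $d$ is bounded and continuous, so every $1$-Lipschitz $f$ is bounded and continuous and all integrals are finite. By the existence of optimal coupling theorem stated above, applied with the lower semicontinuous cost $c=d\ge 0$ and $f=g=0$, the infimum $W(\mu,\nu)$ is attained by some $\pi^*$.

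\textbf{Easy inequality $S\le W$.} Take any $\pi\in\Pi(\mu,\nu)$ and any $f$ with $\|f\|_L\le 1$. The marginal identity recorded after the definition of coupling gives
\[
\int_X f\,d\mu-\int_X f\,d\nu=\int_{X\times X}\bigl(f(x)-f(y)\bigr)\,d\pi(x,y)\le\int_{X\times X} d(x,y)\,d\pi(x,y).
\]
Taking the supremum over $f$ and the infimum over $\pi$ yields $S\le W$.

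\textbf{Hard inequality $W\le S$.} This is a two-step argument.

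First, establish the general Kantorovich duality for a continuous bounded cost $c$ on a compact space:
\[
\min_{\pi\in\Pi(\mu,\nu)}\int c\,d\pi=\sup\Bigl\{\int\varphi\,d\mu+\int\psi\,d\nu\;:\;\varphi,\psi\in C(X),\ \varphi(x)+\psi(y)\le c(x,y)\Bigr\}.
\]
The plan is a minimax / Hahn--Banach argument on $C(X\times X)$, whose dual is the space of signed Radon measures (Riesz).
\begin{enumerate}
\item Define the convex functional $\Theta(u)=0$ if $u\ge -c$ and $+\infty$ otherwise.
\item Define $\Xi(u)=\int\varphi\,d\mu+\int\psi\,d\nu$ if $u(x,y)=\varphi(x)+\psi(y)$, and $+\infty$ otherwise.
\item Note that $\Theta$ is continuous at $u\equiv 1$, where $\Xi$ is finite.
\item Apply Fenchel--Rockafellar duality, $\inf(\Theta+\Xi)=\max_{\pi}\bigl(-\Theta^*(-\pi)-\Xi^*(\pi)\bigr)$.
\item Compute the Legendre transforms. $\Xi^*(\pi)$ is $0$ exactly when $\pi$ has marginals $\mu,\nu$ (and $+\infty$ otherwise). $\Theta^*(-\pi)$ is finite exactly when $\pi\ge 0$, where it equals $\int c\,d\pi$.
\end{enumerate}
Alternatively, if one prefers to avoid the functional-analytic machinery, prove the discrete (finite linear programming) duality first. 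Then pass to the limit by approximating $\mu,\nu$ weakly with finitely supported measures, using compactness (Prokhorov) for the couplings and Arzel\`a--Ascoli for the equi-Lipschitz potentials.

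Second, specialize to $c=d$ via $c$-transforms. Given an admissible pair $(\varphi,\psi)$, replace $\psi$ by
\[
\varphi^c(y)=\inf_x\bigl(d(x,y)-\varphi(x)\bigr),
\]
which only increases the dual objective. As an infimum of $1$-Lipschitz functions of $y$, $\varphi^c$ is $1$-Lipschitz. Then replace $\varphi$ by $\varphi^{cc}$, which is again $1$-Lipschitz and still admissible. For a $1$-Lipschitz $f$, one checks $f^c=-f$: the choice $x=y$ gives $f^c(y)\le -f(y)$, and the Lipschitz bound $f(x)-f(y)\le d(x,y)$ gives $f^c(y)\ge -f(y)$. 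Hence the dual supremum is attained among pairs $(f,-f)$ with $\|f\|_L\le 1$, and it equals $S(\mu,\nu)$. This gives $W\le S$.

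The main obstacle is the first step of the hard inequality, the general duality with no duality gap. There the key points are the correct identification of the dual of $C(X\times X)$ and the verification of the continuity (constraint-qualification) hypothesis. Compactness of $X$ is exactly what makes both go through, and I would present this step carefully, relegating measure-theoretic background to the appendix.
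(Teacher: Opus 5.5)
The paper gives no proof of this theorem. It is quoted from \cite{lecturesOT} and used as a black box, so there is no in-text argument to compare yours with.

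Judged on its own merits, your proposal is correct. It is the standard compact-case proof: Fenchel--Rockafellar on $C(X\times X)$, then reduction by $c$-transforms. The points that need checking all go through:
\begin{enumerate}
\item $\Theta$ vanishes on the ball $\|u-1\|_\infty<1$, since there $u>0\ge -c$, so it is continuous at $u\equiv 1$.
\item $\Theta^*(-\pi)=\sup_{v\le c}\int v\,d\pi$ is $+\infty$ unless $\pi\ge 0$; test $v=c-\lambda w$ with $w\ge 0$ continuous and $\int w\,d\pi<0$. When $\pi\ge 0$ it equals $\int c\,d\pi$, because $c=d$ is itself an admissible continuous test function.
\item Applying $g^c=-g$ to the $1$-Lipschitz function $g=\varphi^c$, together with the symmetry of $d$, gives $\varphi^{cc}=-\varphi^c$. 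So $(\varphi^{cc},\varphi^c)$ is literally a pair $(f,-f)$ with $\|f\|_L\le 1$.
\end{enumerate}
The Riesz identification you invoke produces exactly the regular Borel measures appearing in the statement. You also correctly read the formula as forcing $\nu$ to live on $X$; the ``$\nu\in\mathcal{P}(Y)$'' in the statement is inconsistent with $\int_X f\,d\nu$.

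A few small things to tidy when you write it up:
\begin{enumerate}
\item Say why $\Xi$ is well defined: a decomposition $u=\varphi(x)+\psi(y)$ is unique up to $(\varphi+a,\psi-a)$, and $\mu(X)=\nu(X)$.
\item Note that $\varphi^c$ is real-valued because $\varphi$ is bounded on the compact $X$.
\item Replace ``the dual supremum is attained among pairs $(f,-f)$'' by ``may be restricted to.'' Attainment is true, via Arzel\`a--Ascoli, but you do not need it.
\item Drop the appeal to the existence-of-optimal-coupling theorem, since Fenchel--Rockafellar already returns a maximizing $\pi$.
\end{enumerate}

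As for what your route buys: it gives existence of an optimal plan and absence of a duality gap in one stroke. The price is that it is tied to compactness through $C(X\times X)^*=\mathcal{M}(X\times X)$.

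Your discrete alternative is worth keeping in its own right. It uses finite LP duality, a $c$-transform on the finite support $F$, and the McShane extension $\tilde f(x)=\min_{a\in F}\bigl(f(a)+d(a,x)\bigr)$. This proves the identity for finitely supported measures on an arbitrary metric space. That is exactly where the thesis needs the nontrivial direction: the Jost--Liu lower curvature bound bounds $W_1$ from above through the dual formula on a possibly infinite, hence non-compact, locally finite graph. The compact statement as written does not literally cover that setting.
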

for $1-$Lipchitz functions $f$. 

If we are seeking to find the optimal transport cost of moving mass from some source of a given distribution and optimally distributing this mass to a different distribution, then recall that if $c(x,y)$ is the cost of transporting a single unit of mass from $x$ to $y$ that we can model this optimal transport cost between measures $\mu$ and $\nu$ as follows: 

\begin{align*}
    \inf_{\pi \in \Pi (\mu, \nu)} \int c(x,y) d\pi (x,y)
\end{align*}

A classic idea in optimal transport is to consider this optimal cost as a \textit{distance}  between measures $\mu$ and $\nu$, but in an adjusted form that would satisfy all of the required distance axioms. This leads us to define the Wasserstein distance as follows. 

\section*{p-Wasserstein Distance}

\begin{definition}
    Let $(\mathcal{X}, d)$ be a Polish metric space, and let $p \in [1, \infty)$. For any two probability measures $\mu, \nu$ on $\mathcal{X}$, the Wasserstein distance of order $p$ between $\mu$ and $\nu$ is defined by the formula
\begin{equation}
W_p(\mu, \nu) = \left( \inf_{\pi \in \Pi(\mu, \nu)} \int_{\mathcal{X}} d(x, y)^p \, d\pi(x, y) \right)^{\frac{1}{p}} 
\end{equation}
where $W_p(\mu, \nu)$ satisfies the axioms of distance for any $p \in [1, \infty)$. Let $\mu, \nu, \gamma$ be probability measures on $\mathcal{X}$. Then the p-Wasserstein satisfies (1) nonnegativity, (2), symmetry, (3) the triangle inequality (4) distance between a point and itself is zero:
\begin{enumerate}
    \item  $W_p(\mu, \nu) \geq 0$
    \item  $W_p(\mu, \nu) = W_p(\nu, \mu)$
    \item  $W_p(\mu, \gamma) \leq W_p(\nu, \mu) + W_p(\mu, \gamma)$
    \item $W_p(\mu, \mu) = 0$
\end{enumerate}

\end{definition}

We will work with $p = 1$ for the remainder of the thesis due to Ollivier's definition. Furthermore, note that we can then use the Kantorovich-Rubenstein duality to turn this into a supremum. We will define Wasserstein distance on graphs in Chapters 3 and 4, and in particular, we will demonstrate how to compute optimal transport costs for small cases. For example, see computations in examples \ref{example1} through \ref{example2}. Observe that the 1-Wasserstein distance can be computationally expensive, and due to the complexity of any optimization problem can be difficult to do by hand. Various algorithms are often used for the computation of the 1-Wasserstein distance, including the Hungarian and Sinkhorn algorithms which we reference in the appendix.

\chapter{Ollivier-Ricci Curvature on Metric Spaces}

This chapter is an exposition dedicated to rigorously introducing the Ricci curvature notion introduced by Yann Ollivier from 2007-2009 \cite{ollivier_ricci_2007},  \cite{ollivier_ricci_2009} for arbitrary metric spaces, which has been highly influential the field of discrete differential geometry and was was further studied by mathematicians such as Shing Tung Yau with application to graphs and theoretical computer scientists for application in complex networks and machine learning. In his 2009 paper, Ollivier introduces a notion of Ricci curvature of metric spaces and extends this definition to any Markov chain on a metric space, though we will focus more generally on how this relates the transport from one measure to another as discussed in Chapter 1. This field has coined his notion of Ricci curvature as Ollivier Ricci Curvature or Ollivier's Ricci Curvature. We will use the former. This notion of Ricci curvature is defined in terms of how much small balls are closer than their centers are in terms of 1-Wasserstein distance, and can be used to define Ricci curvature on a Riemannian without requiring derivatives, along with other advantages which we will discuss. Furthermore, it natural extends to defining discrete Ricci curvature on graphs. This chapter will first introduce Ricci curvature as defined by Gregorio Ricci-Curbastro for defining curvature of Riemannian manifolds equipped with Levi-Civita connections and in terms of the Riemannian curvature tensor. We will then introduce Ollivier-Ricci curvature on Riemannian manifolds, and then extensions of this work to graphs. Nearly the entire body of work on Ollivier-Ricci curvature extension to graphs involves undirected graphs. We will then introduce the current body of work studying Ollivier-Ricci curvature on directed graphs. 

Yann Ollivier's original paper proposed a Wasserstein-distance based Ricci curvature notion based on metric spaces. He originally defined this notion with respect to random walks on a metric space, referring simply to a family of measures. Ollivier-Ricci Curvature can be applied to both continuous and discrete settings depending upon the discretization of the Wasserstein transportation distance used. It can be applied to the classical Riemannian case of Ricci curvature and is a desirable method of calculation of Ricci curvature on manifolds that does not require the use of derivatives. On graphs, it is desirable for existing established methods of calculating Wasserstein distance. 

Now we will define Ollivier-Ricci Curvature in terms of the $W_1$ distance introduced in Chapter 1, and the distance $d(x,y)$ between points $x$ and $y$. 
\\
\begin{definition} (Ollivier-Ricci Curvature \cite{ollivier_ricci_2009}) Let $(X,d)$ be a metric space endowed with a random walk $m$. Let $x,y \in Y$ 
and $x\neq y$. The \textit{Ollivier-Ricci curvature} of $(X,d,m)$ along $(xy)$ is 
\begin{equation}
\operatorname{Ric_O}(x,y) := 1 - \frac{W_1(m_x,m_y)}{d(x,y)}
\end{equation}
\end{definition}
We first provide some examples. 
In the case of undirected graphs, we can define Ollivier-Ricci curvature as follows: 
Let $G(V,E)$ be an unweighted undirected graph where $d(x,y)$ denotes the distance in edge count between vertices $x$ and $y$ and $W_1$ is defined as follows. Then, 
the \textit{Ollivier-Ricci curvature} of of an edge $xy$ in $E(G)$ is defined as: 
\begin{equation}
\operatorname{Ric_O}(x,y) := 1 - \frac{W_1(m_x,m_y)}{d(x,y)}
\end{equation}

Ollivier's derivation of this notion comes from the anology between probability measures $m_x, m_y$ and spheres $S_x, S_y$ of points $x,y$ where the relationship between the Wasserstein distance $M_1(m_x,m_y)$ and $d(x,y)$ distance can be used to assess whether a curve or an edge for example are positive or negative curvature. It is synthetically designed such that the following relations hold: 
\begin{itemize}
    \item $\operatorname{Ric_O}(x,y) > 0$  when $W_1(m_x,m_y) < d(x,y)$
    \item $\operatorname{Ric_O}(x,y) < 0$  when $W_1(m_x,m_y) > d(x,y)$
    \item $\operatorname{Ric_O}(x,y) = 0$ when $W_1(m_x,m_y) = d(x,y)$
\end{itemize}
In \cite{ollivier_ricci_2009}, essential properties of Ricci curvature on Riemannian manifolds are shown to be conserved or have analogous results for Ollivier-Ricci curvature on metric spaces. Before I introduce and prove selected results for Ollivier-Ricci curvature, I review essential properties of Ricci curvature on Riemannian manifolds. Many of the results Ollivier introduces are arguments that the Ollivier-Ricci notion retains key results  of classical Ricci curvature on Riemannian manifolds, especially control of geodesic divergence and volume growth. We begin this discussion by reviewing classical Ricci curvature. In particular, we will discuss the Bonnet-Myers Theorem and the  Levy-Gromov isoperimetric inequality, and their analogous extensions for Ollivier's Ricci curvature for general metric spaces. 
\begin{definition}
Let $(M^n,g)$ be a Riemannian manifold equipped with the Levi-Civita connection. The \textbf{Ricci curvature \textit{Ric}} of $M$ is defined as follows: For every $p \in M$, for all $x, y \in T_{p} M$, set $\operatorname{Ric}_{p}(x, y)$ to be the trace of the endomorphism $v \mapsto R_{p}(x, v) y$. With respect to any orthonormal basis $\left(e_{1}, \ldots, e_{n}\right)$ of $T_{p} M$, we have

$$
\operatorname{Ric}_{p}(x, y)=\sum_{j=1}^{n}\left\langle R_{p}\left(x, e_{j}\right) y, e_{j}\right\rangle_{p}=\sum_{j=1}^{n} R_{p}\left(x, e_{j}, y, e_{j}\right)
$$
\end{definition}

The scalar curvature $S$ of $M$ is the trace of the Ricci curvature; that is, for every $p \in M$,

$$
S(p)=\sum_{i \neq j} R\left(e_{i}, e_{j}, e_{i}, e_{j}\right)=\sum_{i \neq j} K\left(e_{i}, e_{j}\right)
$$

where $K\left(e_{i}, e_{j}\right)$ denotes the sectional curvature of the plane spanned by $e_{i}, e_{j}$.

In a chart the Ricci curvature is given by

$$
R_{i j}=\operatorname{Ric}\left(\frac{\partial}{\partial x_{i}}, \frac{\partial}{\partial x_{j}}\right)=\sum_{m} R_{i j m}^{m}
$$

and the scalar curvature is given by

$$
S(p)=\sum_{i, j} g^{i j} R_{i j}
$$

where $\left(g^{i j}\right)$ is the inverse of the Riemann metric matrix $\left(g_{i j}\right)$. 

The Ricci tensor is a $(0,2)$-tensor. However it can be interpreted as a $(1,1)$-tensor given the next definition.

\begin{definition}
For Riemannian manifold $(M^n, g)$,
$$
\left\langle\operatorname{Ric}_{p}^{\#} u, v\right\rangle_{p}=\operatorname{Ric}_{p}(u, v)
$$

for all $u, v \in T_{p} M$.
\end{definition}

\begin{proposition}
Let $(M^n, g)$ be a Riemannian manifold and let $\nabla$ be any connection on $M$. If $\left(e_{1}, \ldots, e_{n}\right)$ is any orthonormal basis of $T_{p} M$, we have

$$
\operatorname{Ric}_{p}(u)=\sum_{j=1}^{n} g^{jk} R_{p}\left(e_{j}, u\right) e_{j}
$$

Then, 

$$
S(p)=\operatorname{tr}\left(\operatorname{Ric}_{p}^{\#}\right)
$$
\end{proposition}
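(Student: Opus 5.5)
We read the statement as saying that the $(1,1)$-tensor $\operatorname{Ric}_p^{\#}$ from the preceding definition acts on $u \in T_pM$ by $\operatorname{Ric}^{\#}_p(u)=\sum_j R_p(e_j,u)e_j$; the stray $g^{jk}$ reduces to $\delta^{jk}$ in an orthonormal basis. We then show that its trace is the scalar curvature $S(p)$. We work throughout with the Levi-Civita connection, as in the definition of $\operatorname{Ric}$, since the symmetries of the curvature tensor are needed. We also fix an orthonormal basis $(e_1,\dots,e_n)$ of $T_pM$ and use the paper's sign convention $\operatorname{Ric}_p(x,y)=\sum_j \langle R_p(x,e_j)y,e_j\rangle_p$.

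\emph{Step 1: identify $\operatorname{Ric}_p^{\#}$.}
Any vector $w$ satisfies $w=\sum_k \langle w,e_k\rangle e_k$ by orthonormality. So it suffices to compute $\langle \operatorname{Ric}^{\#}_p u, e_k\rangle=\operatorname{Ric}_p(u,e_k)$. By the definition,
\[
\operatorname{Ric}_p(u,e_k)=\sum_j \langle R_p(u,e_j)e_k,e_j\rangle_p .
\]
The pair symmetry $R(x,y,z,w)=R(z,w,x,y)$ turns this into $\sum_j\langle R_p(e_k,e_j)u,e_j\rangle$. Antisymmetry in each pair then gives $\langle R_p(e_j,u)e_j,e_k\rangle$, up to the sign fixed by the convention. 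Summing over $k$ yields $\operatorname{Ric}^{\#}_p(u)=\sum_j R_p(e_j,u)e_j$, with the sign matched to the paper's convention.

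\emph{Step 2: take the trace.}
In the orthonormal basis, $\operatorname{tr}(\operatorname{Ric}^{\#}_p)=\sum_i\langle \operatorname{Ric}^{\#}_p e_i,e_i\rangle=\sum_i \operatorname{Ric}_p(e_i,e_i)$. Substituting the definition gives
\[
\operatorname{tr}(\operatorname{Ric}^{\#}_p)=\sum_{i,j}R_p(e_i,e_j,e_i,e_j).
\]
The diagonal terms $i=j$ vanish by antisymmetry, which leaves the paper's expression $S(p)=\sum_{i\ne j}R(e_i,e_j,e_i,e_j)=\sum_{i\neq j}K(e_i,e_j)$. To connect with the chart formula $S=\sum g^{ij}R_{ij}$, we note that the trace of the $(1,1)$-tensor obtained by raising an index is $g^{ij}R_{ij}$. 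This quantity is basis independent, and it reduces to the sum above when $g_{ij}=\delta_{ij}$.

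\emph{Main obstacle.}
The only delicate point is bookkeeping of sign and index conventions. The paper writes $R_p(x,e_j)y$ in one place and $R_p(e_j,u)e_j$ in another, so Step 1 must use the pair and skew symmetries carefully to check that the two agree, rather than differ by a sign. I would first fix $R(x,y,z,w)=\langle R(x,y)z,w\rangle$ and verify on the round sphere, where every $K=1$, that the resulting sign makes $S$ positive. Basis independence of the trace then removes any dependence on the choice of $(e_i)$.
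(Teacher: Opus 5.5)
Your proof is correct. The paper gives no proof of this proposition. It states it and moves on to the remark $\operatorname{Ric}(e_1,e_1)=\sum_{i\ge 2}K(e_1,e_i)$, which, summed over the orthonormal basis, is exactly your Step 2. The paper never verifies the displayed formula for $\operatorname{Ric}^{\#}_p(u)$, which you handle in Step 1, so your write-up is more complete than the source.

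A few tightenings are worth making.

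\emph{Step 1 bookkeeping.} The pair symmetry is unnecessary. As written, your chain (pair symmetry, then skew-symmetry in each pair) lands on $R(e_j,e_k,e_j,u)$, which needs a second application of pair symmetry to become $R(e_j,u,e_j,e_k)$. It is cleaner to apply the two skew-symmetries directly: $R(u,e_j,e_k,e_j)=R(e_j,u,e_j,e_k)$.

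\emph{The sign.} These two transpositions contribute $(-1)^2=+1$, so there is no sign left to be fixed by convention, and the round-sphere check is not needed. Flipping the sign convention for $R$ flips $\operatorname{Ric}$ and $S$ together, because both are built from the same slot pattern $R(\cdot,e_j,\cdot,e_j)$. The identity is therefore a formal consequence of the definitions.

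\emph{Which hypotheses each step uses.} Step 2 does not depend on Step 1 and holds for an arbitrary connection, since $R(e_i,e_i)=0$ always. Only Step 1 needs more: skew-symmetry in the last pair, i.e.\ metric compatibility (torsion-freeness is not used). So your restriction to the Levi-Civita connection, despite the statement's ``any connection,'' is required for the displayed formula for $\operatorname{Ric}^{\#}_p$, but not for $S(p)=\operatorname{tr}(\operatorname{Ric}^{\#}_p)$.

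\emph{Reading the typo.} The basis-free version of the formula is $\sum_{j,k}g^{jk}R_p(e_j,u)e_k$. This, not a single sum with $\delta^{jk}$, is what reduces to your expression in an orthonormal basis.
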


For any orthonormal frame $\left(e_{1}, \ldots, e_{n}\right)$ of $T_{p} M$, by the definition of the sectional curvature $K$, we obtain:

$$
\operatorname{Ric}\left(e_{1}, e_{1}\right)=\sum_{i=1}^{n}\left\langle\left(R\left(e_{1}, e_{i}\right) e_{1}, e_{i}\right\rangle=\sum_{i=2}^{n} K\left(e_{1}, e_{i}\right)\right \rangle
$$

Thus, $\operatorname{Ric}\left(e_{1}, e_{1}\right)$ is the sum of the sectional curvatures of any $n-1$ orthogonal planes orthogonal to $e_{1}$ (a unit vector).

For a Riemannian manifold with constant sectional curvature, we have

$$
\operatorname{Ric}(x, x)=(n-1) K g(x, x), \quad S=n(n-1) K
$$

where $g=\langle-,-\rangle$ is the metric on $M$.\\
\vspace{0.5in}
We will now discuss some essential results regarding curvature and Ricci curvature on manifolds that will be relevant to Ollivier's notion of curvature.

\begin{definition}
The diameter a manifold is defined as:
$$
\operatorname{diam}(M^n):=\sup \{d(p, q): p, q \in M\}
$$
\end{definition}
\begin{theorem} (Bonnet-Myers Theorem)
Suppose $\left(M^{n}, g\right)$ is complete with $\operatorname{Ric}(g) \geq \frac{n-1}{r} g$, then:
$$
\operatorname{diam}\left(M^{n}, g\right) \leq \pi r
$$
Note that we define $\operatorname{Ric}(g) \geq \alpha g$ if for every vector field $X$,
$$
\operatorname{Ric}(g)(X, X) \geq \alpha|X|^{2}
$$
\end{theorem}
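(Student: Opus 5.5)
The plan is the classical second-variation argument. One remark on normalization first. The curvature hypothesis should be read as $\operatorname{Ric}(g) \geq \frac{n-1}{r^2} g$, i.e. the Ricci curvature of the round sphere of radius $r$; this is the form that gives the bound $\pi r$. With the hypothesis literally as written, $\frac{n-1}{r}$, the same argument gives $\operatorname{diam} \leq \pi\sqrt{r}$. I carry out the argument with $\frac{n-1}{r^2}$, and the other version follows by replacing $r^2$ with $r$ throughout.

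\textbf{Setup.} Argue by contradiction. Suppose there are $p, q \in M$ with $d(p,q) = L > \pi r$. Since $M$ is complete, the Hopf--Rinow theorem gives a minimizing unit-speed geodesic $\gamma:[0,L]\to M$ from $p$ to $q$. Choose an orthonormal basis $e_1,\dots,e_{n-1}$ of $\gamma'(0)^\perp$ and extend it by parallel transport along $\gamma$. Because the Levi-Civita connection is metric, the fields $e_i(t)$ stay orthonormal and orthogonal to $\gamma'(t)$. Define the variation fields
\[
V_i(t) = \sin\!\left(\tfrac{\pi t}{L}\right) e_i(t),
\]
which vanish at both endpoints. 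Each $V_i$ is therefore the variation field of a proper variation of $\gamma$ with fixed endpoints.

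\textbf{Second variation.} For a proper variation of a unit-speed geodesic by a field $V$ normal to $\gamma'$, the second variation of length is
\[
L''(0) = \int_0^L \Big( |V'|^2 - \langle R(V,\gamma')\gamma', V\rangle \Big)\, dt .
\]
Since $e_i$ is parallel, $V_i' = \frac{\pi}{L}\cos(\frac{\pi t}{L}) e_i$. Summing over $i$, the curvature terms combine into a Ricci term by the formula $\operatorname{Ric}(e_1,e_1)=\sum_{i\geq 2} K(e_1,e_i)$ stated above, applied with $\gamma'$ in place of $e_1$. This gives
\[
\sum_{i=1}^{n-1} L_i''(0) = \int_0^L \Big( (n-1)\tfrac{\pi^2}{L^2}\cos^2\!\tfrac{\pi t}{L} - \sin^2\!\tfrac{\pi t}{L}\,\operatorname{Ric}(\gamma',\gamma') \Big) dt .
\]
Applying the hypothesis $\operatorname{Ric}(\gamma',\gamma') \geq \frac{n-1}{r^2}$ and $\int_0^L \cos^2 = \int_0^L \sin^2 = L/2$, the sum is at most
\[
(n-1)\frac{L}{2}\left(\frac{\pi^2}{L^2} - \frac{1}{r^2}\right),
\]
which is strictly negative because $L > \pi r$.

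\textbf{Conclusion.} Some $L_i''(0)$ must then be negative. Nearby curves in that variation are strictly shorter than $\gamma$ and have the same endpoints, which contradicts minimality. Hence $\operatorname{diam}(M^n,g) \leq \pi r$. As a corollary, $M$ is bounded and complete, hence compact.

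\textbf{Main obstacle.} The step needing the most care is the second variation formula itself. It is not stated earlier in the paper, so I would derive it from the first variation. For length rather than energy, one needs that $V$ is normal to $\gamma'$ and that $\gamma$ has unit speed, so that the tangential correction term vanishes. The endpoint boundary terms drop because $V(0)=V(L)=0$. Alternatively, I would work with the energy $E$, for which the formula is cleaner, and use that a minimizing geodesic also minimizes energy among curves parametrized on $[0,L]$. The remaining steps, namely the parallel frame, the trace identity, and the trigonometric integrals, are routine.
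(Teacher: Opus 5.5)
The paper gives no proof of this theorem; it is quoted as classical background. The Bonnet--Myers-type results the paper does prove (Ollivier's $L^1$ version and its Lin--Lu--Yau graph analogue) rest on transport estimates such as $d(x,y)\le J(x)+W_1(m_x,m_y)+J(y)$, not on second variation. So there is nothing in the paper to compare against.

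Your proposal is the standard Myers second-variation proof, and it is correct. The parallel orthonormal frame, the trace step turning $\sum_i K(\gamma',e_i)$ into $\operatorname{Ric}(\gamma',\gamma')$, the two integrals equal to $L/2$, and the contradiction with minimality all check out. For the last step, say explicitly that $L_i'(0)=0$ by the first variation formula, because $\gamma$ is a geodesic and the variation is proper. Only then does $L_i''(0)<0$ produce strictly shorter nearby curves.

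There are two sharpenings to make.

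First, your normalization remark understates the problem. As printed, with hypothesis $\frac{n-1}{r}$ and conclusion $\pi r$, the statement is false for $0<r<1$. The round sphere of radius $\sqrt{r}$ satisfies $\operatorname{Ric}=\frac{n-1}{r}g$ but has diameter $\pi\sqrt{r}>\pi r$. So replacing $\frac{n-1}{r}$ by $\frac{n-1}{r^2}$ is a necessary correction, not a cosmetic one. For $r\ge 1$ the literal claim does follow from your bound, since $\pi\sqrt{r}\le \pi r$.

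Second, you need $n\ge 2$. Your final quantity $(n-1)\frac{L}{2}\left(\frac{\pi^2}{L^2}-\frac{1}{r^2}\right)$ is strictly negative only when $n-1>0$. For $n=1$ the hypothesis is vacuous, since $\operatorname{Ric}\equiv 0$, and $\mathbb{R}$ has infinite diameter.
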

We will introduce now the classical isoperimetric inequality in order to discuss the Levy-Gromov isoperimetric inequality in order to relate Ricci curvature to volume.  If we denote $S^{n+1}$ to an $(n+1)$-dimensional sphere let $V_{0} \subset S^{n+1}$ be a domain with smooth boundary $\partial V_{0} $, then the classical inequality gives us a lower bound on the $n-$dimensional volume $\operatorname{Vol}(\partial V_0)$ of the boundary $\partial V_0$. We denote $\operatorname{Vol}(V_0)$ as the $(n+1)-$dimensional volume of the domain $V_0$. We denote the following ratio as $\alpha$:
$$
\alpha = \frac{\operatorname{Vol}(V_0)}{\operatorname{Vol}(S^{n+1})}
$$
For a ball $B_{\alpha} \subset S^{n+1}$, we define its volume:
$$
\operatorname{Vol}(B_{\alpha}) = \alpha \operatorname{Vol} (S^{n+1}).
$$
For the boundary sphere $\partial B_\alpha$ we will denote $s(\alpha)$ to be its $n-$dimensional volume as done by, \cite{GromovMikhail2001CPLI}. Furthermore, as is classically done we denote the ratio between $s(\alpha)$ and $\operatorname{Vol}(S^{n+1})$ to be $I_{S_{n+1}}(\alpha)$ as follows:
$$
I_{S_{n+1}}(\alpha) = \frac{s(\alpha)}{\operatorname{Vol}(S^{n+1})}
$$
\begin{theorem}\label{thm:isoperi} \cite{osserman1978isoperimetric}\cite{GromovMikhail2001CPLI}
Equipped with the notation above, the classical isometric inequality is as follows: 
$$
\frac{\operatorname{Vol}(\partial V_0)}{\operatorname{Vol}(S^{n+1})} \leq I_{S_{n+1}}(\alpha)
$$
\end{theorem}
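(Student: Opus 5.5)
As the paper words it, Theorem~\ref{thm:isoperi} gives an \emph{upper} bound $\operatorname{Vol}(\partial V_0)/\operatorname{Vol}(S^{n+1}) \leq I_{S_{n+1}}(\alpha)$ for an arbitrary smooth domain $V_0$ of volume fraction $\alpha$. My plan is to compare $V_0$ with the geodesic ball $B_\alpha$ of the same volume. First I would reduce to the normalized quantities $\operatorname{Vol}(V_0)=\alpha\operatorname{Vol}(S^{n+1})=\operatorname{Vol}(B_\alpha)$, so that the claim becomes $\operatorname{Vol}(\partial V_0)\le s(\alpha)$. Then I would try to relate the two boundaries through the symmetrization that underlies the classical spherical isoperimetric theorem (Schmidt's spherical symmetrization, or Gromov's localization). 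That argument keeps the enclosed volume fixed while moving $\partial V_0$ to $\partial B_\alpha$, and the plan is to follow how $\operatorname{Vol}(\partial V_0)$ changes along the way.

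The main obstacle comes before any computation: the symmetrization changes boundary measure in the \emph{other} direction. Both Steiner-type symmetrization and the Gromov needle decomposition show that boundary area does not increase, so they yield $\operatorname{Vol}(\partial V_0)\ge s(\alpha)$, not the stated "$\leq$". So the first thing I would do is test the stated direction on a simple family. Fix a small geodesic ball in $S^{n+1}$ and perturb its boundary by a smooth radial oscillation of amplitude $\varepsilon$ and frequency $N$, adjusting by a tiny dilation so the enclosed volume stays exactly $\alpha\operatorname{Vol}(S^{n+1})$. The volume changes by $O(\varepsilon)$, while the boundary area grows roughly like $\varepsilon N$ times the original area. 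Taking $N\to\infty$ with $\varepsilon$ fixed makes $\operatorname{Vol}(\partial V_0)$ arbitrarily large at fixed $\alpha$. This contradicts any finite upper bound such as $s(\alpha)$.

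I therefore expect this test to show that the inequality as printed cannot be proved for arbitrary smooth domains. The honest outcome of the plan is this explicit counterexample, together with the observation that the symmetrization argument proves the reverse bound with equality exactly for caps. That reverse bound is what Gromov's formulation in \cite{GromovMikhail2001CPLI} states, and it is the form needed later for the L\'evy--Gromov comparison. If one insists on a true "$\leq$" statement, the only route I see is to restrict the class of domains, for example to geodesically convex domains contained in a hemisphere. Even then I would not expect the ball to be extremal in that direction, so I would not pursue it.
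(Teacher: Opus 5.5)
Your reading is correct. As printed, with ``$\leq$'', the statement is false. Your oscillating-boundary family is a valid counterexample for every $\alpha\in(0,1)$ and $n\ge 1$: take the radial graph $r=r_0+\varepsilon\phi_N(\theta)$ over a cap, readjust $r_0$ so the volume fraction stays $\alpha$, and let $N\to\infty$. It also has the advantage of being connected, so it still works if ``domain'' is read as ``connected open set''.

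The paper has no proof of its own to compare with; it only cites Osserman and Gromov. Its surrounding text confirms that the intended inequality is the reverse one:
\begin{itemize}
\item the sentence introducing the theorem says the classical inequality gives a \emph{lower} bound on $\operatorname{Vol}(\partial V_0)$;
\item the examples that follow state $|\partial X|\ge|\partial B|$ in $\mathbb{R}^n$ and in $\mathbb{S}^n$;
\item the L\'evy--Gromov theorem, specialized to $M=S^{n+1}$ (where $T(M)=n$), reads exactly $\operatorname{Vol}(\partial V_0)/\operatorname{Vol}(S^{n+1})\ge I_{S^{n+1}}(\alpha)$.
\end{itemize}
So the right fix is to flip the inequality. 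After that, your symmetrization sketch is the standard classical proof: Schmidt's spherical symmetrization, or two-point symmetrization, with equality only for caps.

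One small correction to your attributions. Gromov's own proof of the Riemannian generalization is not a deformation argument. It is a geometric-measure-theory argument: existence and regularity of an isoperimetric minimizer, followed by a Heintze--Karcher-type mean-curvature comparison along its normal exponential map. The needle-decomposition proof is Klartag's, building on Gromov--Milman localization. It works by reducing to one-dimensional inequalities along needles, not by showing that boundary area does not increase under some flow.
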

\begin{example}
The classical inequality is usually formulated with regards to the perimeter of a closed curve in a plane and the corresponding enclosed area. Let $A$ be the area enclosed by a closed curve of length $L$. Then we have $4\pi A \leq L^2$ according to this classical inequality, where $4\pi A = L^2$ is achieved if and only if the curve is a circle. See \cite{osserman1978isoperimetric} for a proof of this classical case and extensions. The inequality allows one to answer the following problem formulation: if perimeter is fixed, then for all closed curves in a given plane, which curve maximizes its enclosed area?
\end{example}
\begin{example} More generally if we consider $R^n$, then the isoperimetric inequality reveals that for all $X \subset R^n$, we obtain $| \partial X| \geq |\partial B|$
where we define $B$ as the ball satisfying $|B| = |X|$. 
\end{example}
\begin{example} Similarly consider the $n-$dimensional sphere $\mathbb{S}^n$. By the isoperimetric inequality we have that for all $X \subset \mathbb{S}^n$ we have that $|\partial X| \geq |\partial B|$ for metric ball $B$ representing a spherical cap satisfying $|B| = |X|$. 
    
\end{example}

We can now move on to the Levy-Gromov isoperimetric inequality, introduced by Gromov as an extension of Levy's isoperimetric inequality for convex hypersurfaces in $\mathbb{R}^{n+1}$. Gromov extends this to all Riemannian manifolds.
\\
\begin{theorem} \label{thm:levy-gromov}
\cite{GromovMikhail2001CPLI}
(Levy-Gromov Isoperimetric inequality)\\
For a compact Riemannian manifold $(M, g)$, we denote it's total volume as done by \cite{GromovMikhail2001CPLI} as $Vol(M)$. Let $M$ be a closed compact $(n+1)$-dimensional manifold and let $E \subset M$ be a domain with smooth boundary $\partial E$. \\
Define $T(M) = inf _{\tau} \operatorname{Ric}(\tau, \tau)$ for $\tau$ over all unit tangent vectors of $M$. If we have:
$$
T(M) \geq n = T(S^{n+1})
$$
Then, 
\begin{equation*}
\frac{\operatorname{vol}\left(\partial E\right)}{\operatorname{vol}(M)} \geq \mathrm{Is}_{n+1}(\alpha), \quad \alpha=\frac{\operatorname{vol}\left(E\right)}{\operatorname{vol}(M)} 
\end{equation*}

where $\operatorname{Is}_{n+1}(\alpha)$ is the equivalent function as in the classical inequality from Theorem~\ref{thm:isoperi}.
\end{theorem}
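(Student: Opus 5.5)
We will follow Gromov's original strategy, which works through a minimizing hypersurface and a comparison argument for its parallel hypersurfaces. Throughout, normalize $\operatorname{vol}(M)$ and $\operatorname{vol}(S^{n+1})$ to compare fractions $\alpha$. The theorem is obtained in five steps.

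\textbf{Step 1: existence and regularity of minimizers.} Fix $\alpha\in(0,1)$. Among all domains $E\subset M$ with $\operatorname{vol}(E)=\alpha\operatorname{vol}(M)$, we take one minimizing $\operatorname{vol}(\partial E)$. Existence comes from compactness in the class of sets of finite perimeter (geometric measure theory). Regularity theory says the boundary $\partial E$ is a smooth hypersurface off a singular set of codimension at least $7$ in $\partial E$. The first variation with the volume constraint shows that the regular part has constant mean curvature $h$. The second variation shows it is stable. It suffices to prove the inequality for this minimizer, since any other $E$ of the same volume has boundary at least as large.

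\textbf{Step 2: comparison of parallel hypersurfaces.} Consider the normal exponential map from the regular part of $\partial E$, and the tubes $E_t=\{x: d(x,E)\le t\}$ (with $t<0$ meaning inward). Let $J(p,t)$ be the Jacobian of $(p,t)\mapsto\exp_p(t\nu(p))$. Along each normal geodesic, the Riccati equation for the shape operator, after tracing, gives
\[
\frac{d}{dt}H + \frac{H^2}{n} + \operatorname{Ric}(\dot\gamma,\dot\gamma)\le 0 .
\]
This holds up to the cut locus, using Cauchy--Schwarz on the trace of the shape operator squared. Combined with the hypothesis $T(M)\ge n$, Sturm comparison gives
\[
J(p,t)\le \Big(\cos t - \tfrac{h}{n}\sin t\Big)^n .
\]
This is exactly the Jacobian of parallel hypersurfaces to a sphere of constant mean curvature $h$ in $S^{n+1}$.

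\textbf{Step 3: integrate to bound the volume.} Every point of $M$ lies on a minimizing normal geodesic from $\partial E$. The singular set has small enough dimension that one still reaches almost every point from the regular part; this is the delicate point. Integrating the Jacobian bound over $\partial E$ and over $t$ on both sides then yields
\[
\frac{\operatorname{vol}(E)}{\operatorname{vol}(\partial E)}\le \frac{\operatorname{vol}(B)}{\operatorname{vol}(\partial B)},
\qquad
\frac{\operatorname{vol}(M\setminus E)}{\operatorname{vol}(\partial E)}\le \frac{\operatorname{vol}(S^{n+1}\setminus B)}{\operatorname{vol}(\partial B)} .
\]
Here $B\subset S^{n+1}$ is the geodesic ball with boundary mean curvature $h$.

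\textbf{Step 4: conclude.} Adding the two inequalities gives
\[
\frac{\operatorname{vol}(M)}{\operatorname{vol}(\partial E)}\le \frac{\operatorname{vol}(S^{n+1})}{\operatorname{vol}(\partial B)} .
\]
We must still match this to the ball $B_\alpha$ of volume fraction $\alpha$, rather than to the ball $B$ selected by $h$. The two volume inequalities of Step 3, each compared with its spherical model, show that $\operatorname{vol}(B)/\operatorname{vol}(S^{n+1})$ and $\alpha$ sit on the correct side of each other. A monotonicity argument in $h$ on the model sphere then gives $\operatorname{vol}(\partial E)/\operatorname{vol}(M)\ge s(\alpha)/\operatorname{vol}(S^{n+1})=\mathrm{Is}_{n+1}(\alpha)$. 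One runs this argument by choosing the sign of $h$, or equivalently by swapping $E$ with its complement.

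\textbf{Step 5: expected difficulty.} The main obstacle is Step 3 in the presence of singularities, that is, justifying that the tube exhausts $M$ up to measure zero and that the comparison holds past focal points. We plan to handle it by restricting to points whose nearest point on $\partial E$ is regular. At such a nearest point a one-sided supporting argument shows the boundary is locally smooth: otherwise a tangent cone would be forced to be a half-space. For this we would cite the Almgren and Heintze--Karcher framework rather than reprove it.
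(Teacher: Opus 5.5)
The paper gives no proof of this theorem; it only cites Gromov. Your outline is a faithful reconstruction of Gromov's original argument: an isoperimetric minimizer whose regular part has constant mean curvature $h$, Heintze--Karcher comparison along normal geodesics, and nearest-point regularity to cover $M$. The architecture is right, but Step~4 does not go through as written and should be replaced.

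The two inequalities of Step~3 do not force $\beta=\operatorname{vol}(B)/\operatorname{vol}(S^{n+1})$ onto any particular side of $\alpha$. Their sum only yields $\operatorname{vol}(\partial E)/\operatorname{vol}(M)\ge \mathrm{Is}_{n+1}(\beta)$, which is useless without knowing where $\beta$ sits. Use the two inequalities one at a time instead. With your convention from Step~2 ($t>0$ pointing out of $E$), the cap $B_h$ of radius $\rho$ has $h=-n\cot\rho$, and
\[
F(h)=\frac{\operatorname{vol}(S^{n+1}\setminus B_h)}{\operatorname{vol}(\partial B_h)}=\frac{\int_\rho^\pi\sin^n s\,ds}{\sin^n\rho},
\qquad
G(h)=\frac{\operatorname{vol}(B_h)}{\operatorname{vol}(\partial B_h)}=\frac{\int_0^\rho\sin^n s\,ds}{\sin^n\rho}.
\]
An elementary one-variable check shows $F$ is decreasing and $G$ is increasing in $h$. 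Let $h_\alpha$ be the mean curvature of $\partial B_\alpha$. If $h\ge h_\alpha$, the complement inequality alone gives
\[
\frac{(1-\alpha)\operatorname{vol}(M)}{\operatorname{vol}(\partial E)}\le F(h)\le F(h_\alpha)=\frac{(1-\alpha)\operatorname{vol}(S^{n+1})}{s(\alpha)}.
\]
If $h\le h_\alpha$, the inner inequality gives the same bound with $\alpha,G$ in place of $1-\alpha,F$. Either way $\operatorname{vol}(\partial E)/\operatorname{vol}(M)\ge s(\alpha)/\operatorname{vol}(S^{n+1})=\mathrm{Is}_{n+1}(\alpha)$. This never uses Theorem~\ref{thm:isoperi}, which the paper prints with the inequality reversed. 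Taking $M=S^{n+1}$, your argument in fact reproves the correct version.

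Two smaller points. First, in Step~3 the covering of $M$ does not follow from the singular set being small. For a general hypersurface, a single corner point can be the nearest point of a set of positive measure. It follows instead from the argument you give in Step~5. A supporting ball at a nearest point $p$ confines the tangent cone of $E$ at $p$ to a half-space. A perimeter-minimizing cone so confined is itself a half-space, and De~Giorgi/Allard regularity then makes $p$ regular. Add the fact that a normal geodesic stops minimizing distance to $\partial E$ by its first focal time, hence by the first zero $r(h)$ of $\cos t-\tfrac hn\sin t$. Together these are exactly what the area-formula integration needs.

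Second, connectedness of $M$ is implicitly required. For $M$ a disjoint union of two round spheres and $E$ one of them, $\partial E=\emptyset$ and the inequality fails. Connectedness is what guarantees $\partial E\neq\emptyset$ in your Step~3.
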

\begin{remark}
The above notation is mostly aligned with the original notation and interpretation from Gromov's original paper. Equivalently we could say as follows. Let Riemannian manifold $(M^n, g)$ with $\operatorname{Ric}_g \geq (n-1)g$ and $E, \partial E$ defined the same as above. Furthermore let $\mathbb{S}^n$ denote the sphere of unit radius and $B \subset \mathbb{S}^n$ be a metric ball satisfying $\frac{|E|}{|M|} = \frac{|B|}{|\mathbb{S}^n|}$, then: 
$$
\frac{|\partial E|}{|M|} \geq \frac{|\partial B|}{|\mathbb{S}^n|}
$$
\end{remark}
\begin{remark}
The Levy-Gromov theorem shows us that among Riemannian manifolds with a fixed positive lower bound on the Ricci tensor, round spheres have the most constrained way of enclosing a given amount of volume relative to their total size. This result shows that a lower bound on Ricci curvature imposes strict limitations on how efficiently volume can be enclosed within a space. In other words, Among all compact manifolds with a given lower Ricci curvature bound, the unit sphere $S^{n+1}$ has the smallest possible surface area for enclosing a given volume fraction. In other words, 
if volume is enclosed within a domain 
$E$ in a manifold respecting the proeprties of the theorem, the surface area of boundary $\partial E$ cannot be smaller than that of the corresponding spherical cap in $S^{n+1}$. 
\end{remark}

\section*{Ollivier-Ricci Curvature Results}

Recall as introduced at the beginning of this chapter that Ollivier-Ricci curvature is defined as follows: 
\begin{definition} (Random walk) Let $(X,d)$ be a Polish metric space endowed with its Borel $\sigma-$algebra. We define a \textit{random walk} $m$ on the space $X$ as the family of probability measures $m_x(\cdot)$ on $X$ for all $x \in X$. The following assumptions are satisfied by $m$: 
\begin{enumerate}
    \item Each measure $m_x$ for $x \in X$ depends measurably on its respective point $x$. 
    \item Each measure $m_x$ has finite first moment, meaning that for any $z\in X$ and $x \in X$ we have: 
$$
\int d(z,y) d m_x (y) < \infty
$$
\end{enumerate} 
\end{definition}
\begin{example}
Note that many examples we use will use what is called a \textit{lazy} random walk. This is simply a random walk $m$ in which the probability of staying in the current state is non-zero, in other words, introducing a lazy option to remain stationary. Specifically, $m_x(x) = \frac{1}{2}$  and for possible sequential step to $y$, then $m_x(y) = \frac{1}{2n}$ if there are $n$ other possible non-stationary steps. 
\end{example}
\begin{definition} 
Let $(X,d)$ be a metric space endowed with a random walk $m$. Let $x,y \in Y$ 
and $x\neq y$. The \textit{Ollivier-Ricci curvature} of $(X,d,m)$ along $(xy)$ is 
\begin{equation}
\operatorname{Ric_O}(x,y) := 1 - \frac{W_1(m_x,m_y)}{d(x,y)}
\end{equation}
\end{definition}
\begin{remark}
Note that Ollivier refers to this as $\kappa(x,y)$ but for clarity and attribution purposes we have denoted this notion with a subscript O. The rest of the notion remains the same. Additionally in \cite{ollivier_ricci_2009}, this is referred to as coarse Ricci curvature by Ollivier but has since been named Ollivier-Ricci or some similar variation of this. 
\end{remark}
\begin{remark} \label{thm: analogy}(The $m_x$ to $S_x$ analogy) The most essential takeaway here is that that measures $m_x, m_y$, for example, are analogous to having two close spheres $S_x, S_y$ in the classical Ricci notion. We can characterize positive Ricci curvature on a Riemannian manifold with regards to spheres around two close points $x$ and $y$, where if there are close spheres $S_x, S_y$ which are closer together by some notion of distance compared with their center points $x,y$ this informally can be thought of as representing positive Ricci curvature. To be more explicit, for closeby points $x,y$ on some Riemannian manifold where we can define a tangent vector by $(xy)$ and additional tangent vectors $v$  at $x$ and $v'$ at $y$ (where $v'$ is found via parallel transport of $v$ from point $x$ to $y$. In positive Ricci curvature we observe that the geodesic from point $x$ and following vector $v$ with get closer together with the geodesic from point $y$ following vector $v'$. In negative Ricci curvature the geodesics get farther apart than the distance between the center points, and in zero Ricci curvature its easy to see that if there is flatness, then there the distance between the geodesics as they leave $S_x, S_y$ with remain the same as $d(x,y)$. In reality, this is really sectional curvature, and the idea of Ricci curvature is that this occurs on average averaging when all tangent vectors at $x,y$. This general concept is visualized in Figure \ref{fig:spheres}. In Ollivier-Ricci curvature, the the differences between the transportation cost or 1-Wasserstein distance between measures $m_x, m_y$ compared with the actual distance between $x,y$ follows the same analogy as the geodesic dispersion and distance compared with distance between points $x, y$ as previously discussed. This is visualized in \ref{fig:measures} where recall that measures are mapping the points $x,y$ to a real-valued number between $0$ and $1$. Of course geometrically, this is not easily visualized like spheres of the previous figure, but is indented to represent the analogy. 
\end{remark}
\begin{figure}
    \centering
    \includegraphics[width=0.8\linewidth]{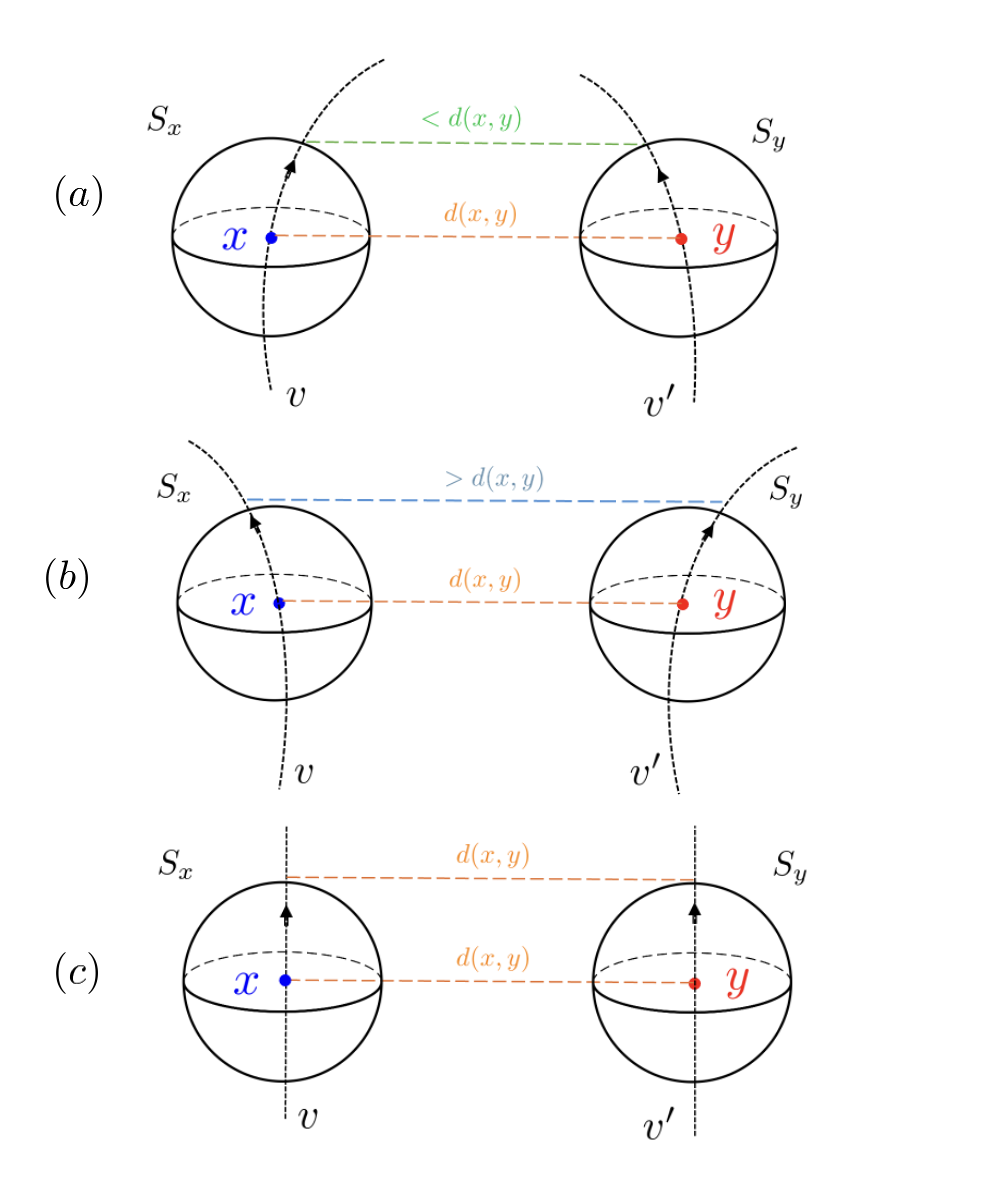}
    \caption{Remark \ref{thm: analogy} illustration: (a) positive Ricci curvature, (b) negative Ricci curvature,  (c) zero Ricci curvature.}
    \label{fig:spheres}
\end{figure}
\begin{definition} ($\epsilon$-random walk). Let $(X, d, \mu)$ be a metric space endowed with the measure $\mu$. Let balls in $X$ have finite measure. ALso let the support of $\mu$ be equal to $X$, denoted $\operatorname{Supp} \mu = X$. For some $\epsilon > 0$, we define the $\epsilon$-\textit{step random walk}, $m_x$ on $X$ as a random walk which starts at point $x$ and which randomly jumps in the ball of radius $\epsilon$ around center point $x$. Each of these jumps has probability proportional to $\mu$ as described below: 
$$
m_x = \mu_{\mid (B(x, \epsilon))}
$$ 
\end{definition}
\begin{figure}
    \centering
    \includegraphics[width=0.7\linewidth]{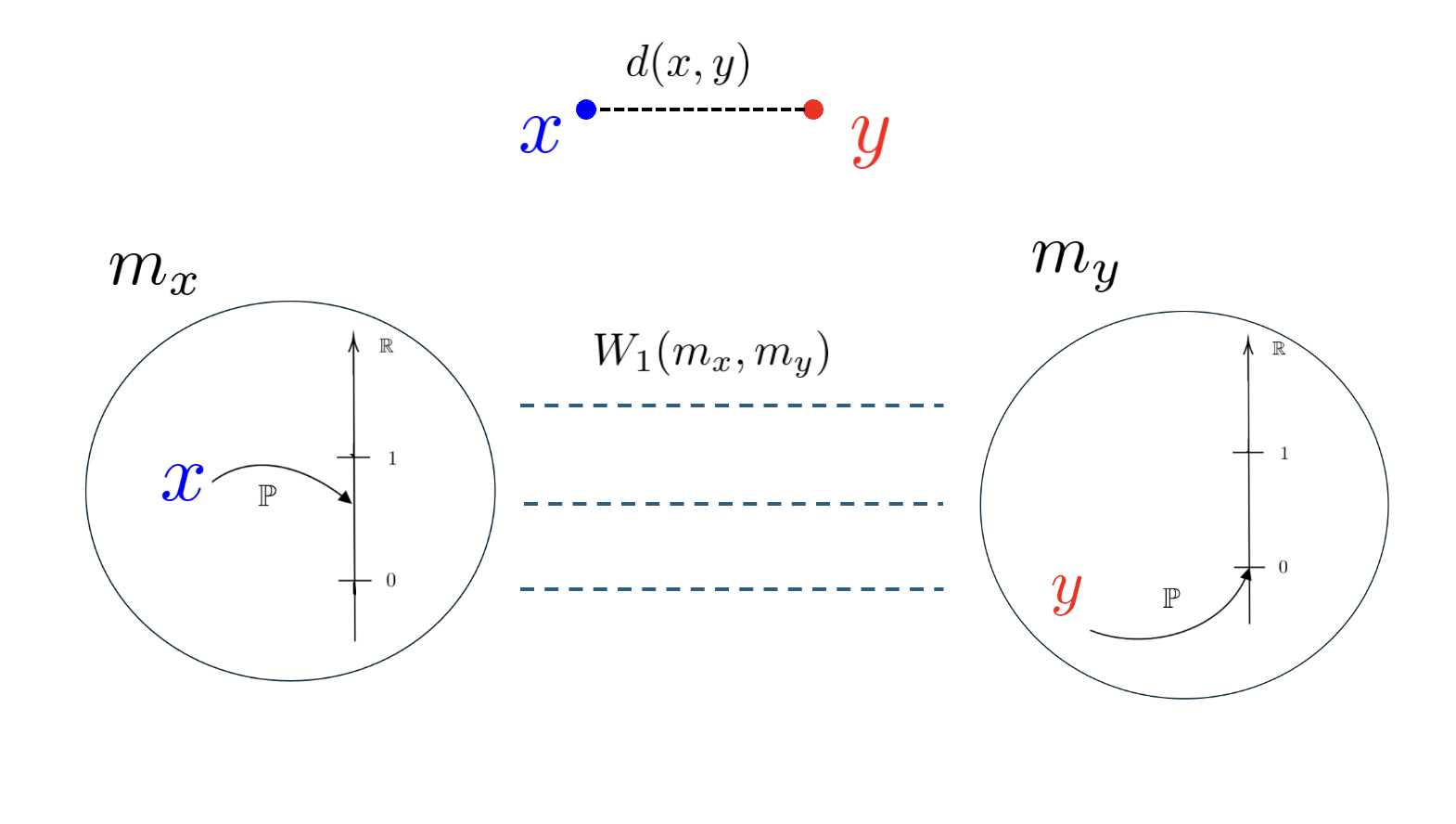}
    \caption{Measures $m_x$ and $m_y$ function as analogues to $S_x$ and $S_y$ on a Riemannian manifold. The sign of Ollivier-Ricci curvature (ORC) depends solely on the difference between $d(x,y)$ and $W_1(m_x,m_y)$. In particular, if if $W_1(m_x,m_y) < d(x,y)$ then ORC is positive, $W_1(m_x,m_y) > d(x,y)$ ORC is negative, and if  $W_1(m_x,m_y) = d(x,y)$  ORC is zero.}
    \label{fig:measures}
\end{figure}
\begin{example}
Consider the case where $(X,d)$ is a Riemannian manifold. The $\epsilon-$random walk on the manifold is $m_x$. For some small $\epsilon$ and  for two points $x$ and $y$ close, then  the $\epsilon$-step random walk is a way of capturing the Ollivier-Ricci curvature in direction $xy$. Geometrically, we can think of $m_x$ as a substitute for the notion of a ball around $x$. 
\end{example}
\begin{definition} (Jump and diffusion constant) 
\begin{itemize}
\item The \textit{jump} of a random walk at $x$ is defined as: 
\[
J(x) := \mathbb{E}_{m_x} d(x, \cdot) = W_1(\delta_x, m_x)
\]
where $\delta_x$ denotes the \textit{Dirac measure} at a point $x$ which assigns mass $1$ to the point $x$ and $0$ at all other points.



\end{itemize}
\end{definition}
These concepts will be essential in a variety of  proofs in this chapter, particularly the analog Bonnet-Myers $L^1$ and $L^2$ theorems, which we present in Theorem \ref{thm: L1-bonnetmyers} and Theorem  \ref{thm: L2-bonnetmyers}. 

\begin{definition} \cite{ollivier_ricci_2009} Define $(X,d)$ as $\epsilon-$geodesic if for any two points $x,y \in X$, there exists a integer $n$ and sequence $x_0 = x, x_1,...,x_n =y$ such that $d(x,y) = \sum d(x_i, x_{i+1})$ and $d(x_i, x_{i+1}) \leq \epsilon$. 
\end{definition}
\begin{remark}
This definition is used in the following proposition. Recall that a geodesic is simply the shortest or minimal length path between two points. Ollivier's paper introduces this definition to capture the idea here is that some spaces can be characterized as near-geodesic, where we can then used local curvature properties to derive global properties such as curvature bounds. In the context of metric spaces, a near-geodesic space is one in which a strict geodesic defining a minimal length path may not always exist. However, such spaces may have paths that are almost minimal paths that are nearly geodesics. This definition allows the flexibility to work with both near and true geodesic spaces. An example of a near-geodesic space, for example, could be a space in which instead of singular shortest paths there are a sequence of paths which could converge to a path of minimal length. Alternatively, there may be spaces in which it is simpler to have approximate shortest paths which may function practically as geodesics do for computational purposes but perhaps the $\epsilon-$geodesics described are not perfectly the shortest paths.
\end{remark}
\begin{remark} With regards to the $\epsilon$ value, Ollivier uses $\epsilon =1$ for  1-dimensional graphs and any given $\epsilon$ for any manifold of a given dimension. 
\end{remark}
The above definition leads us to introduce the following result: 
\begin{theorem} \cite{ollivier_ricci_2009} \label{thm: geodesic}  Suppose that $(X,d)$ is $\epsilon-$geodesic. Then, if 
$$
\operatorname{Ric_O}(x,y) \geq k
$$
for any pair of points $x,y \in X$ with $d(x,y) \leq \epsilon$, then $\operatorname{Ric_O}(x,y) \geq k$ for any pair of points $x,y \in X$. 
\end{theorem}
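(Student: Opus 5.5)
The plan is to use the triangle inequality for $W_1$ (property (3) of the $p$-Wasserstein definition, with $p=1$) along an $\epsilon$-geodesic chain. First I rewrite the hypothesis in transport form: for $d(x,y)\le\epsilon$, $\operatorname{Ric_O}(x,y)\ge k$ is equivalent to
$$
W_1(m_x,m_y)\le (1-k)\,d(x,y).
$$
The goal is to show that this same inequality holds for arbitrary $x,y\in X$. Dividing by $d(x,y)$ then gives $\operatorname{Ric_O}(x,y)\ge k$ by the definition of Ollivier-Ricci curvature.

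Given arbitrary $x\neq y$, I invoke the $\epsilon$-geodesic property of $(X,d)$. This produces a chain $x_0=x,x_1,\dots,x_n=y$ with $d(x_i,x_{i+1})\le\epsilon$ for each $i$ and
$$
\sum_{i=0}^{n-1} d(x_i,x_{i+1}) = d(x,y).
$$
Applying the triangle inequality for $W_1$ repeatedly along the chain gives
$$
W_1(m_x,m_y)\le \sum_{i=0}^{n-1} W_1(m_{x_i},m_{x_{i+1}}).
$$
Each consecutive pair is within distance $\epsilon$, so the local hypothesis applies term by term. (If $x_i=x_{i+1}$ the term is zero, and the bound holds trivially.) This yields
$$
W_1(m_x,m_y)\le (1-k)\sum_{i=0}^{n-1} d(x_i,x_{i+1}) = (1-k)\,d(x,y),
$$
where the final equality is exactly the geodesic additivity. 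Dividing by $d(x,y)>0$ completes the argument.

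There are a few points to check rather than one hard step. First, $W_1(m_{x_i},m_{x_{i+1}})$ must be finite so that the triangle inequality is meaningful; this follows from the finite first moment assumption in the definition of a random walk. Second, the triangle inequality for $W_1$ must be used in its correct form $W_1(\mu,\gamma)\le W_1(\mu,\nu)+W_1(\nu,\gamma)$; the version listed in the paper contains a typo, and I would quote the standard gluing-lemma statement instead. Third, the sign of $1-k$ does not matter. The hypothesis is used only as the upper bound $W_1\le(1-k)d$ on each link, and summing these upper bounds is valid regardless of sign, with equality in the distance sum supplied by the $\epsilon$-geodesic condition. The main conceptual point is that exact additivity of distances along the chain is what allows the local bound to become a global one; a chain that is merely approximately geodesic would lose a factor.
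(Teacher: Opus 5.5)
Your proposal is correct and follows essentially the same route as the paper's proof: chain along the $\epsilon$-geodesic sequence, apply the triangle inequality for $W_1$, bound each link by $(1-k)\,d(x_i,x_{i+1})$, and use exact additivity of distances along the chain. Your extra checks are sound additions the paper leaves implicit: finiteness of $W_1$ via finite first moments, the degenerate links $x_i=x_{i+1}$, and the irrelevance of the sign of $1-k$.
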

\begin{proof} Suppose $(X,d)$ is $\epsilon-$geodesic as defined above with sequence $x_0 = x, x_1,..., x_n = y$ similarly defined for integer $n$.  Recall that $W_1$ is a metric on the space of probability measures with finite first moment satisfying the triangle inequality:
$$W_1(\mu, \rho) \leq W_1(\mu, \nu)+W_1(\nu, \rho) \quad
$$ 
for any probability measures $\mu, \nu, \rho
$. 
We can apply this inequality to the measures
$$
m_{x_0}, m_{x_1}, \ldots, m_{x_n}
$$
where $x_0=x, x_n=y$, and $x_1, \ldots, x_{n-1}$ are the intermediate points chosen so that
$$
d(x, y)=\sum_{i=0}^{n-1} d\left(x_i, x_{i+1}\right) \quad \text { and } \quad d\left(x_i, x_{i+1}\right) \leq \varepsilon
$$
By definition of $W_1$ being a metric, we obtain:
$$
W_1\left(m_{x_0}, m_{x_n}\right)=W_1\left(m_x, m_y\right) \leq \sum_{i=0}^{n-1} W_1\left(m_{x_i}, m_{x_{i+1}}\right)
$$
Given that $\operatorname{Ric}_{O} \left(x_i, x_{i+1}\right) \geq k $ for all pairs $\left\{x_i, x_{i+1}\right\}$ with $d\left(x_i, x_{i+1}\right) \leq \varepsilon$ we have:
$$
W_1\left(m_{x_i}, m_{x_{i+1}}\right) \leq(1-k) d\left(x_i, x_{i+1}\right)
$$

and therefore, 
$$
W_1\left(m_x, m_y\right) \leq(1-k) \sum_{i=0}^{n-1} d\left(x_i, x_{i+1}\right)=(1-k) d(x, y)
$$

Therefore $\operatorname{Ric}_{O}(x, y) \geq k$. 
\end{proof}

\begin{example} Suppose an undirected graph \(G=(V,E)\) is $1-$geodesic in the sense that for any vertices $v_1, v_2 \in V$ there exists an integer $n$ and sequence $v_0 = v, v_1,...,v_n =u$ such that $d(v, u) = \sum d(v_i, v_{i+1})$ and $d(u_i, u_{i+1}) \leq 1$, then if $\operatorname{Ric}_{O}(v_1, v_2) \geq k$, then for any $v_1, v_2 \in V$, we have $\operatorname{Ric}_{O}(v_1, v_2) \geq k$ for any pair of vertices in $V$.
\end{example}

We now introduce the following two theorems from Ollivier \cite{ollivier_ricci_2009}. Before we introduce these two results regarding $W_1$ contraction and convergence, we have the following definitions: 
\begin{definition}
If we let $\mathcal{P}(X)$ by the space of all probability measures $\mu$ on $X$ with \textit{finite first moment},  we are saying that for any $o \in X, \int d(o, x) \mathrm{d} \mu(x)<\infty$. Note that for $\mathcal{P}(X)$, the transportation distance $W_1$ is finite and therefore a true distance. 
\end{definition}
\begin{definition}
Let $\mathcal{P}(X)$ by the space of all probability measures $\mu$ on $X$ with finite first moment as defined above. Let $\mu$ be a probability measure on $X$ and define the measure
$$
\mu * m:=\int_{x \in X} \mathrm{~d} \mu(x) m_x
$$
which is the image of $\mu$ by the random walk.
\end{definition}
\begin{theorem} \label{thm: contraction}($W_1$ contraction)  \\
Let $(X, d, m)$ be a metric space with a random walk. Let $\kappa \in \mathbb{R}$. Then we have $\operatorname{Ric}_O(x, y) \geq k$ for all $x, y \in X$, if and only if for any two probability distributions $\mu, \mu^{\prime} \in \mathcal{P}(X)$ we obtain
$$
W_1\left(\mu * m, \mu^{\prime} * m\right) \leq (1- k) W_1\left(\mu, \mu^{\prime}\right)
$$
Futhermore in this case, if $\mu \in \mathcal{P}(X)$ then $\mu * m \in \mathcal{P}(X)$.
\end{theorem}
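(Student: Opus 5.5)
The plan is to prove the two directions separately, using the coupling (primal) form of $W_1$ from the definition of the $p$-Wasserstein distance together with the Kantorovich--Rubinstein duality where convenient.

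For the reverse direction, assume the contraction inequality holds for all $\mu,\mu'\in\mathcal{P}(X)$. Take $\mu=\delta_x$ and $\mu'=\delta_y$. These lie in $\mathcal{P}(X)$ trivially. Directly from the definition of $\mu*m$ we have $\delta_x*m=m_x$ and $\delta_y*m=m_y$. Since $\Pi(\delta_x,\delta_y)$ consists of the single coupling $\delta_{(x,y)}$, we get $W_1(\delta_x,\delta_y)=d(x,y)$. Hence $W_1(m_x,m_y)\le(1-k)d(x,y)$, which rearranges to $\operatorname{Ric}_O(x,y)\ge k$.

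For the forward direction, assume $W_1(m_x,m_y)\le(1-k)d(x,y)$ for all $x,y$. The steps are as follows.

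\textbf{Step 1: the image measure stays in $\mathcal{P}(X)$.} Fix $o\in X$. By the coupling formulation, $W_1(m_x,\delta_o)=\int d(o,z)\,dm_x(z)$. Then
\[
\int d(o,z)\,d(m_x)(z)\le W_1(m_x,m_o)+J(o)\le(1-k)d(x,o)+J(o).
\]
Integrating this in $d\mu(x)$ shows that $\mu*m$ has finite first moment whenever $\mu$ does. This proves the last claim of the theorem, and it also guarantees that $W_1(\mu*m,\mu'*m)$ is finite.

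\textbf{Step 2: take an optimal coupling of $\mu,\mu'$.} By the theorem on existence of optimal couplings, applied with the lower semicontinuous cost $d$, there is $\xi\in\Pi(\mu,\mu')$ with $\int d(x,y)\,d\xi=W_1(\mu,\mu')$.

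\textbf{Step 3: glue the local couplings.} For each pair $(x,y)$, choose an optimal coupling $\xi_{x,y}\in\Pi(m_x,m_y)$, and form
\[
\Xi:=\int \xi_{x,y}\,d\xi(x,y).
\]
Its first marginal is $\int m_x\,d\xi(x,y)=\int m_x\,d\mu(x)=\mu*m$, and similarly its second marginal is $\mu'*m$. Therefore
\[
W_1(\mu*m,\mu'*m)\le\int d\,d\Xi=\int W_1(m_x,m_y)\,d\xi(x,y)\le(1-k)\int d(x,y)\,d\xi=(1-k)W_1(\mu,\mu').
\]

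The main obstacle is the measurability in Step 3: one needs $(x,y)\mapsto\xi_{x,y}$ to be chosen measurably for $\Xi$ to make sense. I would invoke a measurable selection theorem for optimal couplings on Polish spaces, as in Villani. Alternatively, the problem can be sidestepped by duality, as follows. Fix a $1$-Lipschitz $f$ and set $g(x)=\int f\,dm_x$. The curvature hypothesis together with Kantorovich--Rubinstein gives $|g(x)-g(y)|\le W_1(m_x,m_y)\le(1-k)d(x,y)$, so $g$ is $(1-k)$-Lipschitz. Then
\[
\int f\,d(\mu*m)-\int f\,d(\mu'*m)=\int g\,d\mu-\int g\,d\mu'\le(1-k)W_1(\mu,\mu'),
\]
and taking the supremum over $f$ finishes the proof. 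Here only measurability of $x\mapsto m_x$ is needed, which is part of the random-walk definition. This duality route is the one I would write down, with the stated compactness hypothesis of the duality theorem relaxed to finite first moments. If $k>1$, the same argument still works, with the bound $|g(x)-g(y)|\le W_1(m_x,m_y)\le(1-k)d(x,y)$ forcing equality cases.
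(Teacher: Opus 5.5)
Your proof is correct. The thesis does not prove this statement itself; it defers to Ollivier. Ollivier's argument is essentially your Steps 1--3:
\begin{itemize}
\item Dirac masses for the converse direction.
\item The estimate $\int d(o,z)\,dm_x(z)\le J(o)+(1-k)\,d(o,x)$, integrated against $\mu$, for the first-moment claim.
\item Gluing a measurably chosen family of optimal couplings $\xi_{x,y}$ of $(m_x,m_y)$ against an optimal coupling of $(\mu,\mu')$. The measurability obstacle you flag is exactly what Villani's measurable-selection result for optimal plans is invoked for there.
\end{itemize}

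The duality route you say you would actually write down is a genuinely different proof. It is Ollivier's separate ``Lipschitz contraction'' characterization ($f$ $1$-Lipschitz implies $x\mapsto\int f\,dm_x$ is $(1-k)$-Lipschitz), combined with a second use of duality at the level of $\mu,\mu'$. It trades measurable selection, and the existence of an optimal coupling used in your Step 2, for the nontrivial direction of Kantorovich--Rubinstein. That direction enters only in the final supremum: the bounds $|g(x)-g(y)|\le W_1(m_x,m_y)$ and $\int g\,d\mu-\int g\,d\mu'\le(1-k)W_1(\mu,\mu')$ follow just by integrating against an arbitrary coupling.

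Two points on the duality route:
\begin{itemize}
\item The thesis states duality only for compact $X$, so, as you note, you need the general Polish-space version for measures with finite first moment.
\item Your Step 1 remains necessary here too. It is what makes every $1$-Lipschitz $f$ integrable against $\mu*m$ (via $|f(z)|\le|f(o)|+d(o,z)$), and so justifies the Fubini identity $\int f\,d(\mu*m)=\int g\,d\mu$.
\end{itemize}
The coupling route, in exchange, never leaves the primal formulation and exhibits an explicit coupling of $\mu*m$ and $\mu'*m$.

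One small correction: your closing remark about $k>1$ is muddled. Since $\operatorname{Ric}_O(x,y)\le 1$ always, for $k>1$ both sides of the equivalence fail on any space with two distinct points (take Dirac masses on the contraction side). Both hold trivially on a singleton. So the case is vacuous rather than a matter of ``equality cases.''
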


\begin{theorem} \label{thm: convergence}($W_1$ convergence)\\
Suppose that $\operatorname{Ric}_O(x, y) \geq k>0$ for any two distinct $x, y \in X$. Then the random walk has a unique invariant distribution $v \in \mathcal{P}(X)$.

Moreover, for any probability measure $\mu \in \mathcal{P}(X)$, the sequence $\mu * m^{* n}$ tends exponentially fast to $v$ in $W_1$ distance. Namely
$$
W_1\left(\mu * m^{* n}, v\right) \leq (1- k ^n W_1(\mu, v)
$$
and in particular
$$
W_1\left(m_x^{* n}, v\right) \leq (1- k )^n J(x) / \kappa
$$
\end{theorem}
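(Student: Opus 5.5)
The plan is to obtain everything from the $W_1$ contraction property, Theorem~\ref{thm: contraction}, together with completeness of $(\mathcal{P}(X), W_1)$. Since $\operatorname{Ric}_O(x,y)\geq k$ for all distinct $x,y$, Theorem~\ref{thm: contraction} says that the map $\Phi:\mu\mapsto \mu * m$ sends $\mathcal{P}(X)$ into itself and satisfies
$$W_1(\Phi\mu,\Phi\mu')\leq (1-k)\,W_1(\mu,\mu').$$
Because $k>0$, this is a strict contraction with constant $1-k<1$. (We may assume $k\leq 1$, since $W_1\geq 0$ forces $\operatorname{Ric}_O\leq 1$; if $k=1$ the map is constant and the claims are immediate.)

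\textbf{Step 1 (existence and uniqueness).} Recall that for $X$ Polish, $(\mathcal{P}(X),W_1)$ restricted to measures with finite first moment is a complete metric space. This is a standard fact from Villani which we would quote, and it is the main point needing external justification. Banach's fixed point theorem then gives a unique $\nu\in\mathcal{P}(X)$ with $\nu * m=\nu$, that is, a unique invariant distribution. Uniqueness also follows directly: if $\nu,\nu'$ are both invariant, then $W_1(\nu,\nu')\leq(1-k)W_1(\nu,\nu')$, which forces $W_1(\nu,\nu')=0$.

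\textbf{Step 2 (exponential convergence).} We use associativity, $\mu * m^{*n}=\Phi^n\mu$, and invariance, $\Phi^n\nu=\nu$. Induction on $n$ with the contraction inequality gives
$$W_1(\mu * m^{*n},\nu)\leq (1-k)^n W_1(\mu,\nu).$$

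\textbf{Step 3 (the bound starting from a point).} Apply Step 2 with $\mu=\delta_x$, noting that $\delta_x * m^{*n}=m_x^{*n}$. It remains to bound $W_1(\delta_x,\nu)$. By the triangle inequality and invariance,
$$W_1(\delta_x,\nu)\leq W_1(\delta_x,m_x)+W_1(\delta_x*m,\nu*m)\leq J(x)+(1-k)W_1(\delta_x,\nu),$$
using $J(x)=W_1(\delta_x,m_x)$ and the contraction. Rearranging gives $W_1(\delta_x,\nu)\leq J(x)/k$. Combining this with Step 2 yields
$$W_1(m_x^{*n},\nu)\leq (1-k)^n J(x)/k,$$
where the $\kappa$ in the statement is read as $k$.

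The main obstacle is the completeness of the Wasserstein space. If we prefer to avoid citing it, the fallback is to show directly that $(\delta_x * m^{*n})_n$ is $W_1$-Cauchy, using $W_1(m_x^{*n},m_x^{*(n+1)})\leq(1-k)^nJ(x)$. We would then still need completeness, or tightness plus lower semicontinuity of $W_1$, to produce the limit, so quoting completeness is the cleaner route.
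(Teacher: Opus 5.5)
Your proof is correct and follows the route the paper indicates. The paper only remarks that the result follows from the $W_1$ contraction theorem and defers to Ollivier, whose argument is exactly yours: completeness of $(\mathcal{P}(X),W_1)$ with Banach's fixed point theorem, then iterating the contraction, then the estimate $W_1(\delta_x,\nu)\leq J(x)+(1-k)W_1(\delta_x,\nu)$, which you may rearrange because $W_1(\delta_x,\nu)<\infty$ (as $\nu$ has finite first moment). Your reading of the statement's typos as $(1-k)^n$ and $\kappa=k$ is the intended one.
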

We cite the proofs in \cite{ollivier_ricci_2009} for reference, but note that Theorem \ref{thm: convergence} follows from the contraction property of \ref{thm: contraction}. Other important results from Ollivier which we will not state here include proofs that the operations of composition, superposition and $L-1$ tensorization all preserve positive Ollivier-Ricci curvature. 

We will now focus on the major proofs of Ollivier that extend the Bonnet-Myers and Levy-Gromov theorems from classical Ricci curvature to Ollivier-Ricci curvature. For the Bonnet-Myers theorems, Ollivier extends Bonnet Myers to present the four results: (1) Weak $L^1$ Bonnet Myers, (2) Average $L^1$ Bonnet Myers, (3) Strong $L^2$ Bonnet Myers, and (4) Average $L^2$ Bonnet Myers for Ollivier-Ricci curvature. We will present and prove (1) and (4). Regarding the use of the terms $L^1$ and $L^2$, this is referring to the difference between measuring expected value in the $L^1$ case versus variance or energy in the $L^2$ case, i.e.:
\[
\|f\|_{L^1} = \int |f(x)| \, dx
\]
\[
\]

\[
\|f\|_{L^2} = \left( \int |f(x)|^2 \, dx \right)^{1/2}
\]
Below is what Ollivier describes as a weak analogue to the classical Bonnet-Myers theorem.
\begin{theorem} \label{thm: L1-bonnetmyers}($L^1$ Bonnet-Myers) \\ Suppose that $\operatorname{Ric_O}(x, y) \geq k >0$ for all $x, y \in X$. Then for any $x, y \in X$ we have
$$
d(x, y) \leq  \frac{J(x)+J(y)}{\operatorname{Ric_O}(x, y)}
$$
and therefore
$$\operatorname{diam} X \leq \frac{2 \sup _x J(x)}{k}
$$
\end{theorem}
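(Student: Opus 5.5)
The plan is to combine the triangle inequality for $W_1$ with the definition of the jump $J(x)=W_1(\delta_x,m_x)$. Both are already available: $W_1$ is a genuine metric on $\mathcal{P}(X)$, as recalled in the proof of Theorem \ref{thm: geodesic}. Fix $x\neq y$ in $X$. I would first observe that the Dirac measures satisfy $W_1(\delta_x,\delta_y)=d(x,y)$. This holds because the only coupling of $\delta_x$ and $\delta_y$ is $\delta_{(x,y)}$, so the infimum in the definition is attained trivially. The assumption that each $m_x$ has finite first moment guarantees that $J(x)<\infty$ and that all measures involved lie in $\mathcal{P}(X)$.

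The key step is the chain of triangle inequalities
\[
d(x,y)=W_1(\delta_x,\delta_y)\le W_1(\delta_x,m_x)+W_1(m_x,m_y)+W_1(m_y,\delta_y)=J(x)+W_1(m_x,m_y)+J(y).
\]
By the definition of Ollivier-Ricci curvature, $W_1(m_x,m_y)=(1-\operatorname{Ric_O}(x,y))\,d(x,y)$. Substituting gives $d(x,y)\le J(x)+J(y)+d(x,y)-\operatorname{Ric_O}(x,y)\,d(x,y)$. After cancellation this reads $\operatorname{Ric_O}(x,y)\,d(x,y)\le J(x)+J(y)$. Since $\operatorname{Ric_O}(x,y)\ge k>0$, we may divide by it to get the first claim.

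For the diameter bound, I would use $\operatorname{Ric_O}(x,y)\ge k$ and $J(x),J(y)\le\sup_z J(z)$ to obtain $d(x,y)\le 2\sup_z J(z)/k$ for every pair $x\ne y$. The case $x=y$ is trivial. Taking the supremum over all pairs then gives $\operatorname{diam}X\le 2\sup_x J(x)/k$. If $\sup_x J(x)=\infty$, the statement is vacuous.

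There is no real obstacle. The only points needing care are checking $W_1(\delta_x,\delta_y)=d(x,y)$ and noting that the triangle inequality for $W_1$ applies to these measures, which the finite first moment assumption ensures. The statement's use of $\operatorname{Ric_O}(x,y)$ in the denominator is justified precisely by the positivity hypothesis $k>0$.
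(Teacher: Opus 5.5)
Your proposal is correct and follows the paper's proof: both pass from $W_1(\delta_x,\delta_y)=d(x,y)$ through $m_x$ and $m_y$ via the triangle inequality for $W_1$, identify the outer terms with $J(x)$ and $J(y)$, and then apply the curvature hypothesis before taking the supremum for the diameter. The one small difference is that you substitute the exact identity $W_1(m_x,m_y)=(1-\operatorname{Ric_O}(x,y))\,d(x,y)$, which gives the first inequality exactly as stated, whereas the paper substitutes the bound $W_1(m_x,m_y)\le(1-k)\,d(x,y)$ and so arrives directly at the $k$-version used for the diameter estimate.
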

\begin{proof}
Recall that for a ponint $x \in X$, the expected distance or jump length from x when the random walk begins is defined as follows:
$$
J(x) = \int d(x,y) d \delta_x = W_1(\delta_x, m_x)
$$
Observe that if we compute the 1-Wasserstein distance between two Dirac measures this is equivalent to the distance $d(x,y)$ between the two points these Dirac measures are supported: 
$$
W(\delta_x, \delta_y) = \int d(u,v) d \delta_{(x,y)}(u,v) = d(u,v)
$$
which is intuitive because the cost of transporting a Dirac measure from one point to the other only requires transport from their respective points themselves. From here, we can apply the following triangle inequality as done previously: 
$$
W_1(\delta_x, \delta_y) \leq W_1(\delta_x, m_x) + W_1(m_x, m_y) + W_1(m_y, \delta_y)
$$
which implies that, 
$$
W_1(\delta_x, \delta_y) \leq J(x) + W_1(m_x, m_y) + J(y)
$$
Now recall that we are given the fact that $\operatorname{Ric_O}(x, y) \geq k >0$ for all $x, y \in X$ such that 
$$
1 - \frac{W_1 (m_x, m_y)}{d(x,y)} \geq k >0
$$
So therefore, 
$$
W_1(m_x, m_y) \leq (1-k)d(x,y)
$$
We then obtain the inequality:
\begin{align*}
d(x,y) = W_1(\delta_x, \delta_y &\leq J(x) + W_1(m_x, m_y) + J(y)  \\ &\leq J(x) + J(y) + (1-k)d(x,y)
\end{align*}
and it follows that: 
$$
d(x,y) \leq \frac{\operatorname{Ric}_O}{J(x) + J(x)}
$$
For the second part of this proof we which to show from the previous result that diameter of the space $X$ is bounded by $\frac{2sup_x J(x)}{k}$. By definition: 
$$
\operatorname{diam}X := \operatorname{sup}_{x,y \in X} d(x,y)
$$
Therefore, 
$$
\operatorname{diam} X = \sup_{x, y} d(x, y)
\leq \sup_{x, y} \left( \frac{J(x) + J(y)}{k} \right)
$$
but we arbitrarily taking the supremum for all $x,y$ so the maximal value of $J(x) + J(y)$ will just be $2(J(x))$ for whichever $x$ achieves this upper bound. And we have, 
$$
\operatorname{diam} X \leq 2 \frac{\operatorname{sup}_x J(x)}{k}
$$
as desired. 
\end{proof}
\begin{remark}
Ollivier calls this a weaker analogue to Bonnet-Myers because it is not as tight of a diameter bound as the traditional Bonnet-Myers introduced earlier in the chapter. However, below is an example where this bound is actually sharp where the equality is attained. 
\end{remark}
\begin{example} \cite{ollivier_ricci_2009} We can consider the discrete cube defined as $C= \{0,1\}^N$ where each cube edge has length 1 (equipped with the Hamming metric). Define $m$ to be the lazy random walk on the graph $C$. If we define the lazy random walk such that staying at any vertex occurs with probability $\frac{1}{2}$, then if we let $m$ be the lazy random walk on the graph $C$ we get $m_x(x) = \frac{1}{2}$ and for neighbor $y$ of $x$ we get: $m_x(y) = \frac{1}{2N}$. To compute the Ollivier-Ricci curvature, we will define each vertex of the graph in terms of a string of bits, and then estimate the optimal coupling and obtain a lower bound on the $W_1(m_x, m_y)$ transport cost. 

\begin{proof}
Suppose vertex $u \in C(V)$ can be defined as a bit $000..0$, and $v \neq u$  as $100..0$. For any vertex $u$ of the cube such that $1 \leq k \leq N$  we can denote any neighbor of vertex $u$ as a bit where the $i$th value of the bit string is swapped from $0$ to $1$ or vice versa and is therefore distinct by 1 bit. We must consider how to define an optimal coupling between $m_u$ and $m_v$. We define $m_u$ as follows: the mass at $u$ is $\frac{1}{2}$ and $\frac{1}{2N}$ for all $N$ neighbors of $u$ from $u^i$ to $u^N$ by definition of the lazy random walk. We have the same distribution around $v$, i.e. $m_v$ is defined such that the mass at $v$ is $\frac{1}{2}$ and $\frac{1}{2N}$ for all $N$ neighbors. If the two vertices differ than more bits than the first bit such that $i \geq 2$ for $u^k$, $v^k$, then we can transport mass $\frac{1}{2} - \frac{1}{2N}$ from $x^i$ to $y^i$. The Ollivier Ricci must be $\operatorname{Ric (x,y)}\geq \frac{1}{N}$ given that we are moving only from adjacent neighbors to others with distance 1. We can define the Lipschitz function
$$
f: X \rightarrow \{0,1\}
$$
such that some $w \in X$ gets mapped to the first bit of it's string, and we have $f(x) = 0$, $f(y) = 1$ for example. In order to obtain optimality of this coupling through a lower bound on $W_1(m_x, m_y)$ we can consider the expectations of $f$ under $m_x, m_y$ as follows:
\[
\begin{aligned}
\mathbb{E}_{m_x}[f] &= \frac{1}{2N} \cdot 1 + \left(1 - \frac{1}{2N} \right) \cdot 0 = \frac{1}{2N} \\
\mathbb{E}_{m_y}[f] &= \frac{1}{2} + \frac{1}{2N} \cdot 0 + \left(1 - \frac{1}{2} - \frac{1}{2N} \right) \cdot 1 = 1 - \frac{1}{2N}
\end{aligned}
\]

\[
\begin{aligned}
W_1(m_x, m_y) &\geq \mathbb{E}_{m_y}[f] - \mathbb{E}_{m_x}[f] \\
&= \left(1 - \frac{1}{2N} \right) - \frac{1}{2N} = 1 - \frac{1}{N}
\end{aligned}
\]
Recall that the diameter of a graph is the, in informal terms, the longest-shortest distance between any two vertices of a graph $x,y$. In other words, out of all of the shortest paths in a graph between any two vertices, the diameter is the longest of these. Given the results of Theorem ~\ref{thm: L1-bonnetmyers}, we obtain:
$$
\operatorname{diam}C \leq N
$$
Observe by this definition that the diameter of a discrete cube as we have defined has diameter exactly $N$, because if we measure the distance by total possible numbering of differing bits for distinct vertices, we get $N$ differing bits, and have concluded this example. 
\end{proof}
\end{example}


The goal of this remainder of this thesis is to discuss results of Ollivier-Ricci curvature on graphs, so for this reason we refer the interested reader to Ollivier's original paper  for further extensions of classical Ricci curvature-related theorems, such as the extension of the original Levy-Gromov inequality. In Ollivier's Theorem 33 \cite{ollivier_ricci_2009}, he shows an analogous result that for random walks with positive Ollivier-Ricci curvature and controlled diffusion, then Lipschitz functions on the space have controlled Gaussian-like concentration about their mean, with additional proof of exponential decay at the tails. In addition to that fascinating result, Ollivier also proves a modified logarithmic Sobolev inequality to prove this Gaussian concentration, similar to the Sobolev inequality from Bakry-Emery theory. 

\chapter[Lin-Lu-Yau Extension to Undirected Graphs and Combinatorial Bounds]{Lin-Lu-Yau Extension to Undirected Graphs and Combinatorial Bounds}

In Ollivier's paper, the notion of Ricci curvature is readily applicable to a variety of metric spaces including graphs. However, in that paper Ollivier did not formalize his findings in the graph setting to the extent done by Yong Lin, Linyuan Lu, and Shing-Tung Yau in their 2011 paper. Lin-Lu-Yau \cite{lin2011ricci} present novel results of Ollivier-Ricci curvature on graphs, particularly with regard to theoretical bounds. Second, they extend the notion of Ollivier-Curvature to their own novel definition by applying limits to the Ollivier-Ricci notion under a specific condition, and apply this novel notion to general graphs and graph products. Furthermore, they study Ollivier-Ricci curvature for all general $\alpha-$lazy random walks that need not have stationary probability of $\frac{1}{2}$ as in Ollivier's original paper. Yong Lin and Shing-Tung Yau \cite{lin2010ricci} first published a single result regarding Ollivier-Ricci curvature in their 2010 paper on Ricci curvature and eigenvalue estimates in which they prove a combinatorial lower bound. We will introduce this finding, their modification for ease of application of Ollivier-Ricci curvature in the undirected graph setting. Finally, we will introduce the work of Jurgen Jost and Shiping Liu in their 2014 paper proving a variety of additional combinatorial bounds on Ollivier-Ricci curvature of graphs as a follow-up to the work of Lin-Yau. \cite{jost_olliviers_2014}. The formalization of Ollivier-Ricci curvature on graphs as introduced by Lin-Lu-Yau forms the basis of our proposed Ollivier-Ricci curvature notion on directed graphs in the next chapter. Similarly, the bounds proposed by Jost and Liu on undirected graphs are the essential building blocks upon which we will prove simple directed cases of directed Ollivier-Ricci curvature bounds in the next chapter. 

We will now introduce the extension of Ollivier-Ricci curvature defined by Lin-Lu-Yau (2011) formally in Definition \ref{def: lin lu yau notion}. 
\begin{definition} For any given vertex $x \in V(G)$ we will let $\Gamma(x)$ denote the neighborhood of $x$ not including $x$, such that: 
\begin{align*}
    \Gamma (x) &= \{ v; vx \in E(G)\}\\
    \mathcal{N}(x) &= \Gamma (x) \cup \{x\}
\end{align*}
\end{definition}
The definition below for $W_1(m_u, m_v)$ is not novel to this paper, but we we define in precisely for clarity in the setting of undirected graphs: 
\begin{definition} Let a coupling $A$ between measure $m_u$ and measure $m_v$ be defined $A: V \times V \rightarrow [0,1]$ with finite support where $\sum_{x \in V} A(x,y) = m_v$, $\sum_{y \in V} A(x,y) = m_u$. For $d(x,y)$ defined as the edge-count or graph distance between vertices $x,y \in V$, then we define the 1-Wasserstein distance as: 
$$
W_1(m_u, m_v) = \inf_{A} \sum_{x,y \in V} A(x,y) d(x,y)
$$
in which the sum is taken over all couplings between $m_u, m_v$, equivalent to
$$
W_1(m_u, m_v) = \sup_{f} \sum_{x \in V} f(x) [m_u(x) - m_v(x)]
$$
for all 1-Lipshitx functions $f$.
\end{definition}

\begin{definition} For any vertex $x \in V(G)$ for some undirected graph $G$ and for any $\alpha \in [0,1]$, define the probability measure $m_x^\alpha$ as follows: 
\[
m_x^{\alpha}(v) =
\begin{cases}
\alpha, & \text{if } v = x, \\
\frac{(1 - \alpha)}{d_x}, & \text{if } v \in \Gamma(x), \\
0, & \text{otherwise}.
\end{cases}
\]
Then, for any $x,y \in V(G)$, Lin-Lu-Yau define the $\alpha-$(Ollivier)-Ricci curvature $\operatorname{Ric}_\alpha$ as: 
$$
\operatorname{Ric}_\alpha = 1 - \frac{W(m_x^\alpha, m_y^\alpha)}{d(x,y)}
$$
where the 1-Wasserstein distance is defined as done in Chapters 1 and 2.  
\end{definition}
\begin{remark}
The authors call this notion $\kappa_\alpha$ but we will use $\operatorname{Ric}_\alpha$ for continuity and clarity.
\end{remark}
\begin{remark}
Observe that other than specifically focusing Ollivier's notion towards the case of graphs, Lin-Lu-Yau modify the measure such that depending on the probability input $\alpha \in \mathbb{R}$, the random walk starting at $x$, $m_x^\alpha$, is defined distinctly at $x$ compared to neighbors of $x$, and is furthermore assigned to be zero outside of the neighborhood of $x$. This follows the idea of an  lazy random walk in which the probability of being stationary at $x$ is greater than zero. In this case we clearly observe that the measure defined at $x$ for a random walk starting at $x$ will be exactly $\alpha$. 
\end{remark}
We will introduce the following results from this paper: a theorem on concavity, and selected upper and lower bounds for the curvature of a given edge depending on transport costs. 

\begin{theorem} \label{thm: concavity}(Concavity) $\operatorname{Ric}_\alpha(x,y)$ is concave in $\alpha \in [0,1]$ for any two vertices $x,y \in V(G)$. \\
We define concavity as follows defined:
$$
\operatorname{Ric}_\beta \geq \lambda \operatorname{Ric}_\alpha + (1- \lambda) \operatorname{Ric}_\gamma
$$
for $0 \geq \alpha < \beta < \gamma \leq 1$ where
$\lambda = \frac{(\gamma - \beta)}{(\gamma - \alpha)}$ such that $\beta = \lambda\alpha + (1- \lambda)\gamma$.
\end{theorem}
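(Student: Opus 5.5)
Since $\operatorname{Ric}_\alpha(x,y) = 1 - W_1(m_x^\alpha, m_y^\alpha)/d(x,y)$ and $d(x,y)$ does not depend on $\alpha$, concavity of $\alpha \mapsto \operatorname{Ric}_\alpha(x,y)$ is equivalent to convexity of $\alpha \mapsto W_1(m_x^\alpha, m_y^\alpha)$ on $[0,1]$. I will prove that convexity, using the linearity of the measures in $\alpha$ together with the Kantorovich--Rubinstein duality, in the discrete form given in the definition of $W_1$ for undirected graphs.

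The first step is to observe that $m_x^\alpha$ is affine in $\alpha$. Writing $\delta_x$ for the Dirac mass at $x$ and $u_x = \frac{1}{d_x}\sum_{v\in\Gamma(x)}\delta_v$ for the uniform measure on $\Gamma(x)$, we have $m_x^\alpha = \alpha\delta_x + (1-\alpha)u_x$. Hence, if $\beta = \lambda\alpha + (1-\lambda)\gamma$ with $\lambda = \frac{\gamma-\beta}{\gamma-\alpha}$, a direct check gives
$$m_x^\beta = \lambda m_x^\alpha + (1-\lambda) m_x^\gamma,$$
and likewise for $y$. This is a routine coefficient comparison: the coefficient of $\delta_x$ is $\beta$ on both sides, and the coefficient of $u_x$ is $1-\beta$ on both sides.

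The second step uses duality. For any $1$-Lipschitz $f$, the quantity $\sum_v f(v)[m_x^\beta(v) - m_y^\beta(v)]$ is linear in the pair of measures, so it equals
$$\lambda\sum_v f(v)[m_x^\alpha(v)-m_y^\alpha(v)] + (1-\lambda)\sum_v f(v)[m_x^\gamma(v)-m_y^\gamma(v)],$$
which is at most $\lambda W_1(m_x^\alpha,m_y^\alpha) + (1-\lambda)W_1(m_x^\gamma,m_y^\gamma)$. Taking the supremum over $f$ gives $W_1(m_x^\beta,m_y^\beta) \le \lambda W_1(m_x^\alpha,m_y^\alpha) + (1-\lambda)W_1(m_x^\gamma,m_y^\gamma)$.

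If I want to avoid duality, the primal route works too. Take optimal couplings $A_\alpha$ and $A_\gamma$ (they exist by the existence theorem of Chapter 1, or trivially because the supports are finite). Then $\lambda A_\alpha + (1-\lambda)A_\gamma$ is a coupling of $m_x^\beta$ and $m_y^\beta$, since the marginal constraints are linear, and its cost is exactly the convex combination of the two optimal costs.

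The final step is to divide by $d(x,y) > 0$, subtract from $1$, and use $\lambda + (1-\lambda) = 1$. This gives $\operatorname{Ric}_\beta \ge \lambda\operatorname{Ric}_\alpha + (1-\lambda)\operatorname{Ric}_\gamma$, which is the inequality in the statement.

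There is no substantial obstacle here. The only points needing care are the bookkeeping showing that the convex combination of measures matches $m^\beta$ with the stated $\lambda$, and the fact that the convex combination of couplings is again a valid coupling. I also read the condition "$0 \ge \alpha$" in the statement as the intended $0 \le \alpha$.
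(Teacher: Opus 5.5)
Your proof is correct, and your ``primal route'' paragraph is exactly the paper's argument. The paper takes optimal couplings $P$ and $Q$ for the $\alpha$- and $\gamma$-measures and forms $R=\lambda P+(1-\lambda)Q$. It checks the marginals of $R$ case by case ($v=y$, $v\in\Gamma(y)$, otherwise), then bounds $W_1(m_x^\beta,m_y^\beta)$ by the cost of $R$. Your affine decomposition $m_x^\alpha=\alpha\delta_x+(1-\alpha)u_x$ replaces that three-case verification with one line that handles both marginals at once.

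Your lead argument via duality rests on the same underlying fact, joint convexity of $W_1$ in the pair of measures, but the two routes use different inputs. The primal route needs only the definition of $W_1$ as an infimum, plus couplings realizing it. Those exist trivially for finitely supported measures, and $\varepsilon$-optimal couplings would do anyway. The dual route needs the nontrivial direction of Kantorovich--Rubinstein: that $W_1(m_x^\beta,m_y^\beta)$ actually equals the supremum over $1$-Lipschitz $f$. Bounding the $\alpha$ and $\gamma$ pairings needs only the easy inequality. The paper asserts this equality in its discrete definition of $W_1$, and for finitely supported measures it is just finite LP duality, so your dual argument is legitimate. Still, the primal version is the more elementary and self-contained of the two.
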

\begin{remark} The idea here if you have two different random walk processes on the graph, for example one with parameter \(\alpha\) and another with a larger parameter \(\gamma\) and you create an intermediate process by mixing these two using some convex combination with parameter \(\beta\), the resulting curvature \(\operatorname{Ric}_\beta(x,y)\) is at least as high as the weighted average of the curvatures at the endpoints. This concavity indicates a kind of curvature stability where small changes in the random walk’s behavior as controlled by \(\alpha\) do not cause unexpectedly low curvature values. Instead, the intermediate curvature is always at least what you would expect if you compute some kind of average of extreme cases of parameters. 
\end{remark}
We will now move on to a proof of Theorem ~\ref{thm: concavity}. 
\begin{proof}
Define couplings $P, Q$ as the couplings which satisfy: 
$$
W_1(m_x^\alpha, m_y^\alpha) = \inf_{P} \sum_{u,v \in V} P(x,y) d(x,y)
$$
$$
W_1(m_x^\gamma, m_y^\gamma) = \inf_{Q} \sum_{u,v \in V} Q(x,y) d(x,y)
$$
where the $P$ is the coupling between probability measures $m_x^\alpha$ and $m_y^\alpha$ and $Q$ is the coupling between probability measures $m_x^\gamma$ and $m_y\gamma$. 

Let $R = \lambda P + (1- \lambda)Q$. We will verify that $R$ is an optimal coupling between $m_x^\beta$ and $m_y^\beta$ by evaluating $\sum_{u \in V} R(u,v)$ as follows: 
\begin{align*}
\sum_{u \in V} R(u, v) = \sum_{u \in V} \lambda P(u, v) + (1 - \lambda) Q(u, v) \\
= \lambda m_y^\alpha(v) + (1 - \lambda) m_y^\gamma(v) \\
\end{align*}
We wish to prove that $\lambda m_y^\alpha(v) + (1 - \lambda) m_y^\gamma(v) =  m_y^\beta(v)$ use Lin-Lu-Yau definition of probability measure depending on the three  cases of $v=y$ and $v \in \Gamma(y)$, and $v \neq y, \notin \Gamma(y)$. \\
Case 1: $v = y$:
\begin{align*}
\lambda m_y^\alpha(v) + (1 - \lambda) m_y^\gamma(v) &= \lambda \alpha + (1 - \lambda)\gamma \\
&= \beta \\
&= m_y^\beta(v)
\end{align*}
Case 2: $v \in \Gamma(y)$:
\begin{align*}
\lambda m_y^\alpha(v) + (1 - \lambda) m_y^\gamma(v) = \lambda \frac{1 - \alpha}{d_y} + (1 - \lambda)\frac{1 - \gamma}{d_y} \\
= \frac{1 - \beta}{d_y} \\
= m_y^\beta(v)
\end{align*}
Case 3: $v \neq y, \notin \Gamma(y)$:
\begin{align*}
\lambda m_y^\alpha(v) + (1 - \lambda) m_y^\gamma(v) = \lambda (0) + (1 - \lambda) (0) = 0 = m_y^\beta(v)
\end{align*}
and therefore $\sum_{u \in V} R(u, v) = m_y^\beta(v)$ in all cases. We can repeat each of these cases analogously to show that $\sum_{v \in V} R(u, v) = m_x^\beta(u)$, and therefore $R$ is a coupling between $m_x^\beta(u)$ and $m_y^\beta(v)$, we have by definition that: 
$$
W_1(m_x^\beta(u), m_y^\beta(v)) \leq \sum_{u, v \in V} R(u, v) d(u,v)
$$
Furthermore, by our previous work we can obtain the following equality by the definition of $R$: 
\begin{align*}
\sum_{u, v \in V} R(u, v) d(u,v)  
&= \lambda \sum_{u,v \in V} P(u, v) d(u, v) + (1 - \lambda) \sum_{u,v \in V} Q(u, v) d(u, v) \\
&= \lambda W(m_x^\alpha, m_y^\alpha) + (1 - \lambda) W(m_x^\gamma, m_y^\gamma)
\end{align*}
By substitution, 
$$
W(m_x^\beta, m_y^\beta) \leq \lambda W(m_x^\alpha, m_y^\alpha) + (1 - \lambda) W(m_x^\gamma, m_y^\gamma)
$$
Therefore because we have, 
$$
\kappa_\beta(x, y) = 1 - \frac{W(m_x^\beta, m_y^\beta)}{d(x, y)} 
$$
It follows that, 
\begin{align*}
\kappa_\beta(x, y) &\geq \lambda \left( 1 - \frac{W(m_x^\alpha, m_y^\alpha)}{d(x, y)} \right) + (1 - \lambda) \left( 1 - \frac{W(m_x^\gamma, m_y^\gamma)}{d(x, y)} \right) \\
&= \lambda \kappa_\alpha(x, y) + (1 - \lambda) \kappa_\gamma(x, y)
\end{align*}
and the proof is complete. 
\end{proof}
\begin{remark} 
This lemma indicates that any function
$$
\varphi(\alpha) = \frac{\operatorname{Ric}_\alpha(x,y)}{(1-\alpha)}
$$
is increasing on $\alpha$ over the interval $[0,1]$. 
\end{remark}
In order to make an important observation about the Lin-Lu-Yau version of Ollivier Ricci curvature regarding this result, we will state another important result of the paper. 
\begin{theorem} \label{yau-bound1} For any $x,y \in V$ and $\alpha \in [0,1]$, we obtain the following upper bound on $\operatorname{Ric}_\alpha$:
$$
\operatorname{Ric}_\alpha \leq (1- \alpha) \frac{2}{d(x,y)}
$$
\end{theorem}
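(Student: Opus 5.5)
The plan is to obtain a lower bound on $W_1(m_x^\alpha, m_y^\alpha)$ from the Kantorovich--Rubinstein duality, i.e. the supremum form of $W_1$ over $1$-Lipschitz functions stated in the graph definition above. Write $d = d(x,y)$ and use the $1$-Lipschitz function
$$
f(v) = d(x,v).
$$
Then $W_1(m_y^\alpha, m_x^\alpha) \geq \sum_v f(v)\,[m_y^\alpha(v) - m_x^\alpha(v)]$, so it suffices to estimate $\mathbb{E}_{m_y^\alpha}[f]$ from below and $\mathbb{E}_{m_x^\alpha}[f]$ from above.

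The two expectations are computed from the definition of $m^\alpha$.

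\begin{itemize}
\item Under $m_x^\alpha$, the point $x$ carries mass $\alpha$ and $f(x)=0$. Each neighbor of $x$ has $f=1$, and the neighbors carry total mass $1-\alpha$. Hence $\mathbb{E}_{m_x^\alpha}[f] = 1-\alpha$.
\item Under $m_y^\alpha$, the point $y$ carries mass $\alpha$ and $f(y)=d$. Each neighbor $w$ of $y$ satisfies $f(w) \geq d-1$ by the triangle inequality, and the neighbors carry mass $1-\alpha$. Hence $\mathbb{E}_{m_y^\alpha}[f] \geq \alpha d + (1-\alpha)(d-1)$.
\end{itemize}

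Subtracting the two estimates gives
$$
W_1(m_x^\alpha,m_y^\alpha) \;\geq\; \alpha d + (1-\alpha)(d-1) - (1-\alpha) \;=\; d - 2(1-\alpha).
$$
Dividing by $d$ and substituting into $\operatorname{Ric}_\alpha = 1 - W_1/d$ gives $\operatorname{Ric}_\alpha(x,y) \leq 2(1-\alpha)/d(x,y)$, which is the claim.

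No step here is a real obstacle. The two points needing care are the following.

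\begin{itemize}
\item The graph has to be connected, so that $f$ is finite, and $f$ has to be $1$-Lipschitz for the path metric. The latter is immediate from the triangle inequality.
\item The duality has to be applied with the correct sign. Both orientations of the difference of expectations are admissible, since $-f$ is also $1$-Lipschitz.
\end{itemize}

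The case $x=y$ is excluded because $d(x,y)$ appears in the denominator.
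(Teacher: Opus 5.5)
Your argument is correct. This includes the degenerate case $\alpha=1$, where it gives $\operatorname{Ric}_1(x,y)=0\le 0$.

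The thesis does not actually prove this statement. It quotes it from Lin--Lu--Yau and uses it only for the existence of the limit defining $\operatorname{Ric_{LLY}}$ and in the graph $L^1$ Bonnet--Myers theorem.

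The closest argument in the thesis is the proof of Ollivier's $L^1$ Bonnet--Myers theorem. That proof works on the primal side, via $d(x,y)=W_1(\delta_x,\delta_y)\le J(x)+W_1(m_x,m_y)+J(y)$. For the $\alpha$-lazy measures all non-lazy mass sits at distance one, so $J(x)=J(y)=1-\alpha$. This yields exactly your intermediate inequality $W_1(m_x^\alpha,m_y^\alpha)\ge d(x,y)-2(1-\alpha)$. No positivity assumption is needed, since one never divides by the curvature.

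The two arguments are dual views of the same estimate. Your $f=d(x,\cdot)$ is an optimal test function for the pair $(\delta_y,\delta_x)$. Splitting $f(y)-f(x)$ through $\mathbb{E}_{m_y^\alpha}f$ and $\mathbb{E}_{m_x^\alpha}f$ reproduces the three-term triangle inequality term by term:
\begin{itemize}
\item $f(y)-\mathbb{E}_{m_y^\alpha}f\le J(y)$;
\item $\mathbb{E}_{m_y^\alpha}f-\mathbb{E}_{m_x^\alpha}f\le W_1$;
\item $\mathbb{E}_{m_x^\alpha}f-f(x)=J(x)$.
\end{itemize}

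Your version is self-contained. It needs only weak duality: for any coupling $A$ and $1$-Lipschitz $f$, $\sum A(u,v)d(u,v)\ge\sum A(u,v)(f(u)-f(v))$. It does not need the triangle inequality for $W_1$, nor the compactness hypothesis under which Kantorovich--Rubinstein is stated in Chapter~1, so it applies verbatim to infinite locally finite graphs. The primal version instead shows the bound to be the special case $J\equiv 1-\alpha$ of Ollivier's diameter estimate.

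The only hypotheses worth recording are those implicit in the definition of $m^\alpha$: $d(x,y)<\infty$, and $d_x,d_y\ge 1$ when $\alpha<1$. Full connectivity of the graph is not needed, since only the values of $f$ on the supports enter.
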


\begin{definition} \label{def: lin lu yau notion} (Lin-Lu-Yau Graph Ricci Curvature): By the fact that $\varphi(\alpha)$ as defined above is increasing on $\alpha \in [0,1]$ due to Theorem \ref{thm: concavity} and by the bound given by Theorem \ref{yau-bound1}, then Lin-Lu-Yau show that the following limit exists: 
$$
\lim_{\alpha \to 1} \frac{\operatorname{Ric}_\alpha(x, y)}{1 - \alpha}
$$
and they call the result of the limit the Ricci curvature of edge $xy \in E(G)$. For clarity, we denote this: 
$$
\operatorname{Ric_{LLY}} = \lim_{\alpha \to 1} \frac{\operatorname{Ric}_\alpha(x, y)}{1 - \alpha}
$$
to specify this as the \textit{Lin-Lu-Yau graph Ricci curvature} in an extension of Ollivier's. 
\end{definition}

We will now introduce an result of Lin-Lu-Yau that functions as an analog of Ollivier's $L^1$ Bonnet-Myers result presented in Theorem \ref{thm: L1-bonnetmyers} after introducing a lemma of the paper required for the proof. Recall that the diameter of a graph is the maximal distance between any two distinct vertices of the graph. 

\begin{lemma} \label{lemma2.3}
If \( \operatorname{Ric_{LLY}}(x, y) \geq k \) for any edge \( xy \in E(G) \), then \( \operatorname{Ric_{LLY}}(x, y) \geq k \) for any pair of vertices \( (x, y) \).
\end{lemma}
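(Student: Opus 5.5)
The plan is to mimic the proof of Theorem~\ref{thm: geodesic}: a connected graph with the path metric is $1$-geodesic, and the triangle inequality for $W_1$ lets curvature bounds along edges propagate to arbitrary pairs. The extra work here is to carry this out at the level of each $\alpha$ and then pass to the limit defining $\operatorname{Ric_{LLY}}$.

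First I fix vertices $x,y$ with $d(x,y)=n$ and a shortest path $x=x_0,x_1,\dots,x_n=y$, so that each $x_ix_{i+1}\in E(G)$ and $\sum_i d(x_i,x_{i+1})=n$. For each fixed $\alpha\in[0,1)$ the triangle inequality for $W_1$ gives
$$
W_1(m_x^\alpha,m_y^\alpha)\leq \sum_{i=0}^{n-1}W_1(m_{x_i}^\alpha,m_{x_{i+1}}^\alpha).
$$
Since $d(x_i,x_{i+1})=1$, each term equals $1-\operatorname{Ric}_\alpha(x_i,x_{i+1})$. Dividing by $n$ then yields
$$
\operatorname{Ric}_\alpha(x,y)\geq \frac{1}{n}\sum_{i=0}^{n-1}\operatorname{Ric}_\alpha(x_i,x_{i+1}).
$$
This is exactly the computation in the proof of Theorem~\ref{thm: geodesic}, except that I keep the average explicit rather than imposing a uniform bound beforehand.

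Next I divide by $1-\alpha>0$ and let $\alpha\to 1$. The right-hand side is a finite sum, and each term $\operatorname{Ric}_\alpha(x_i,x_{i+1})/(1-\alpha)$ converges to $\operatorname{Ric_{LLY}}(x_i,x_{i+1})\geq k$ by Definition~\ref{def: lin lu yau notion}. The left-hand side must also be handled. The limit defining $\operatorname{Ric_{LLY}}(x,y)$ for a general pair exists by the same argument as in Definition~\ref{def: lin lu yau notion}: $\varphi(\alpha)=\operatorname{Ric}_\alpha(x,y)/(1-\alpha)$ is increasing by the consequence of Theorem~\ref{thm: concavity}, and it is bounded above by $2/d(x,y)$ by Theorem~\ref{yau-bound1}. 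Nothing in those results uses adjacency of $x$ and $y$. Taking limits on both sides then gives $\operatorname{Ric_{LLY}}(x,y)\geq \frac1n\sum_i \operatorname{Ric_{LLY}}(x_i,x_{i+1})\geq k$.

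The main obstacle is justifying the limit step. I need to make sure $\operatorname{Ric_{LLY}}$ is well defined for non-adjacent pairs, and, if needed, that $\operatorname{Ric}_1(x,y)=0$. The latter holds since $m_x^1=\delta_x$, so that $\varphi$ is genuinely the difference quotient of a concave function vanishing at $\alpha=1$. As a fallback that avoids any subtlety about existence of the limit, I would work with $\liminf_{\alpha\to1}$ on the left-hand side. The inequality survives passage to $\liminf$ because limits are additive over finite sums, and the monotonicity of $\varphi$ then upgrades the $\liminf$ to a limit. I would also remark that connectivity of $G$ is implicitly assumed; otherwise $d(x,y)=\infty$ and the statement is vacuous.
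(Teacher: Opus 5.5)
Your proof is correct. It is essentially the argument the paper defers to (Lin--Lu--Yau's Lemma 2.3, which the paper cites rather than proves): telescope $W_1$ along a shortest path at each fixed $\alpha$, exactly as in Theorem~\ref{thm: geodesic}, then divide by $1-\alpha$ and let $\alpha \to 1$. Your justification that the limit exists for non-adjacent pairs is also sound: concavity together with $\operatorname{Ric}_1(x,y)=0$ makes $\varphi$ increasing, and Theorem~\ref{yau-bound1} bounds it above.
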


See Lemma 2.3 of \cite{lin_ricci_2011} for a proof of this statement. 

\begin{theorem} ($L^1$ Bonnet-Myers for Graphs)\\ 
For any \( x, y \in V(G) \), if \( \operatorname{Ric_{LLY}}(x, y) > 0 \), then
\[
d(x, y) \leq \left\lfloor \frac{2}{\operatorname{Ric_{LLY}}(x, y)} \right\rfloor.
\]
Moreover, if for any edge \( xy \), \( \operatorname{Ric_{LLY}}(x, y) \geq k > 0 \), then the diameter of graph \( G \) has the following upper bound:
$$
\operatorname{diam}(G) \leq \frac{2}{k}.
$$
\end{theorem}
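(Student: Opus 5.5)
The approach is to copy the proof of Theorem~\ref{thm: L1-bonnetmyers} for the $\alpha$-lazy walk $m^\alpha$, and then divide by $1-\alpha$ and pass to the limit $\alpha\to 1$ as in Definition~\ref{def: lin lu yau notion}.

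\textbf{Step 1: jumps.} Fix $x,y\in V(G)$ and $\alpha\in[0,1)$. The jump of the $\alpha$-lazy walk is
$$J_\alpha(x)=W_1(\delta_x,m_x^\alpha)=\sum_{v}m^\alpha_x(v)\,d(x,v)=(1-\alpha),$$
because all non-stationary mass sits on neighbors at distance $1$. The same holds for $y$.

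\textbf{Step 2: triangle inequality.} As in the proof of Theorem~\ref{thm: L1-bonnetmyers}, the triangle inequality for $W_1$ gives
$$d(x,y)=W_1(\delta_x,\delta_y)\le J_\alpha(x)+W_1(m_x^\alpha,m_y^\alpha)+J_\alpha(y).$$
Rewriting $W_1(m_x^\alpha,m_y^\alpha)=(1-\operatorname{Ric}_\alpha(x,y))\,d(x,y)$ yields
$$\operatorname{Ric}_\alpha(x,y)\,d(x,y)\le 2(1-\alpha).$$
This is exactly Theorem~\ref{yau-bound1}, so it may simply be cited.

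\textbf{Step 3: the limit.} Divide by $1-\alpha>0$ to get $d(x,y)\,\frac{\operatorname{Ric}_\alpha(x,y)}{1-\alpha}\le 2$ for every $\alpha\in[0,1)$. Let $\alpha\to1$; the limit exists by Definition~\ref{def: lin lu yau notion}. This gives
$$d(x,y)\operatorname{Ric_{LLY}}(x,y)\le 2.$$
When $\operatorname{Ric_{LLY}}(x,y)>0$, this becomes $d(x,y)\le 2/\operatorname{Ric_{LLY}}(x,y)$. Since $d(x,y)$ is a nonnegative integer, we may take the floor.

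\textbf{Step 4: diameter.} Assume $\operatorname{Ric_{LLY}}\ge k>0$ on every edge. Lemma~\ref{lemma2.3} extends this bound to all pairs $(x,y)$. Then $d(x,y)\le 2/\operatorname{Ric_{LLY}}(x,y)\le 2/k$ for every $x\ne y$. Taking the supremum gives $\operatorname{diam}(G)\le 2/k$. The graph is implicitly connected here, since otherwise $d=\infty$ and the curvature is not defined for such pairs.

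\textbf{Main obstacle.} The limiting step hides the real difficulty. One has to know that $\operatorname{Ric_{LLY}}$ is defined for non-adjacent pairs and that Lemma~\ref{lemma2.3} applies. That lemma is essentially Theorem~\ref{thm: geodesic}, applied to $\operatorname{Ric}_\alpha/(1-\alpha)$ along a geodesic path of edges (the graph is $1$-geodesic) before passing to the limit. Since the paper defers its proof to the literature, I would cite it as is. If needed, I would sketch the chaining argument: $W_1(m^\alpha_x,m^\alpha_y)\le\sum_i W_1(m^\alpha_{x_i},m^\alpha_{x_{i+1}})$, then divide by $1-\alpha$ and let $\alpha\to1$.
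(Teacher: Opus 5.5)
Your proposal is correct and follows the paper's proof. Both use the bound $\operatorname{Ric}_\alpha(x,y)\le 2(1-\alpha)/d(x,y)$ from Theorem~\ref{yau-bound1}, divide by $1-\alpha$ and let $\alpha\to 1$, then apply Lemma~\ref{lemma2.3} for the diameter. The only additions are that you rederive Theorem~\ref{yau-bound1} from $J_\alpha(x)=1-\alpha$ and the $W_1$ triangle inequality (as in Theorem~\ref{thm: L1-bonnetmyers}) instead of citing it, and you justify the floor by the integrality of $d(x,y)$, which the paper leaves implicit.
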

\begin{proof}
By Theorem \ref{yau-bound1}, we have
\[
\frac{\kappa_\alpha(x, y)}{1 - \alpha} \leq \frac{2}{d(x, y)}.
\]
To make this of the LLY form, we take the linmit $\alpha \rightarrow 1$ and obtain the first part below, where the $d(x,y)$ follows from the fact that \( \kappa(x, y) > 0 \).
\[
\kappa(x, y) \leq \frac{2}{d(x, y)} \implies d(x, y) \leq \frac{2}{\kappa(x, y)} 
\]
The assumption here is that $\operatorname{Ric_{LLY}}(x, y) > k > 0 $ for all edges we can invoke Lemma \ref{lemma2.3} to make the claim therefore that for any pair of vertices in $x,y \in V(G)$ we obtain: 
$$
d(x,y) \leq \frac{2}{k}
$$
Since $\operatorname{diam}(G) = \operatorname{max}_{x,y \in V(G)} d(x,y)$ it is then clear that we obtain: 
$$
\operatorname{diam}(G) \leq \frac{2}{k}
$$
\end{proof}

\section*{Combinatorial Bounds of Jost-Liu}

We will first introduce the 2010 result of Yong Lin and Shing-Tung Yau on Ollivier's Ricci curvature. This result is the basis of the combinatorial curvature bounds of Jost-Liu which we will present in this remainder of this chapter. 
\begin{theorem} (Lin-Yau Bound for Ollivier-Ricci on Locally Finite Graph) \cite{lin2010ricci} \label{lin-yau} For an infinite but locally finite, weighted, and connected graph G with adjacent vertices $x,y \in V$ (connected by an edge), and furthermore $d_x$ denoting the degree of $x$ and $d_y$ the degree of $y$, Ollivier-Ricci curvature has the following lower bound for $d_x, d_y > 1$:
$$
\operatorname{Ric_O}(x,y) \geq \frac{2}{d_x} + \frac{2}{d_y} - 2
$$
\end{theorem}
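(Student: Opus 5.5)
The plan is to bound $W_1(m_x,m_y)$ from above by exhibiting one explicit coupling. Any coupling gives an upper bound on the infimum, so with $d(x,y)=1$ the curvature bound follows directly. Throughout, $m_x$ is the non-lazy walk $m_x^0$ of the Lin--Lu--Yau definition with $\alpha=0$. It puts mass $1/d_x$ on each vertex of $\Gamma(x)$; local finiteness guarantees $d_x<\infty$, so $m_x$ has finite support. The target is
$$
W_1(m_x,m_y)\;\le\; 3-\frac{2}{d_x}-\frac{2}{d_y},
$$
because then $\operatorname{Ric_O}(x,y)=1-W_1(m_x,m_y)\ge \frac{2}{d_x}+\frac{2}{d_y}-2$. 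Since $x\sim y$, we have $y\in\Gamma(x)$ and $x\in\Gamma(y)$. So $m_x(y)=1/d_x$ and $m_y(x)=1/d_y$, while $m_y(y)=m_x(x)=0$.

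The first step is the crude distance bound: for any $u\in\Gamma(x)$ and $v\in\Gamma(y)$, the path $u\,x\,y\,v$ gives $d(u,v)\le 3$. Hence every unit of mass costs at most $3$. The second step identifies two pieces of mass that can be moved at cost at most $1$:
\begin{enumerate}
\item the mass $1/d_x$ that $m_x$ places at $y$, sent into $\Gamma(y)\setminus\{x\}$, each of whose vertices is at distance $1$ from $y$;
\item the mass $1/d_y$ that $m_y$ requires at $x$, supplied from $\Gamma(x)\setminus\{y\}$, each of whose vertices is at distance $1$ from $x$.
\end{enumerate}

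The third step checks that these two pieces can be realized simultaneously inside a genuine coupling. This is the only point needing care, and it is where $d_x,d_y>1$ enters. The first piece needs $1/d_x\le m_y(\Gamma(y)\setminus\{x\})=1-1/d_y$. The second needs $1/d_y\le m_x(\Gamma(x)\setminus\{y\})=1-1/d_x$. Both reduce to $1/d_x+1/d_y\le 1$, which holds whenever $d_x,d_y\ge 2$.

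The two pieces are disjoint in both source and target. The first has source $y$ and targets in $\Gamma(y)\setminus\{x\}$. The second has sources in $\Gamma(x)\setminus\{y\}$ and target $x$. Therefore, after they are placed, the leftover source mass and leftover target mass are nonnegative measures of equal total $1-1/d_x-1/d_y$. Any coupling of these leftovers, for example the product coupling normalized by the total, completes the plan. Each leftover unit costs at most $3$ by the first step, and strictly less when $x,y$ share neighbors, which only helps.

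Summing, the total cost is at most $1\cdot(1/d_x+1/d_y)+3\,(1-1/d_x-1/d_y)=3-2/d_x-2/d_y$. This is the target bound, and the statement follows. If edge weights are meant to enter through the measures, I would repeat the same argument with $1/d_x$ replaced by the weighted transition probabilities. In that case the feasibility check of the third step is the part to re-verify.
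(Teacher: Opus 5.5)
Your argument is correct, but it works on the primal side of the transport problem, while the paper works on the dual side. The paper proves the Jost--Liu form $\operatorname{Ric_O}(x,y)\ge -2(1-\frac{1}{d_x}-\frac{1}{d_y})_+$ and reads off the Lin--Yau bound for $d_x,d_y>1$. It writes $W_1(m_x,m_y)$ as a supremum over $1$-Lipschitz $f$, re-centers the sums as $f(z)-f(x)$ and $f(z')-f(y)$, and bounds every term by the Lipschitz condition. This gives $W_1\le 2-\frac{1}{d_x}-\frac{1}{d_y}+\bigl|1-\frac{1}{d_x}-\frac{1}{d_y}\bigr|$.

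The dual route needs no feasibility check. It also yields the $(\cdot)_+$ formula in one stroke, including the degree-one case where the bound is $0$. However, it bounds a supremum in order to control an infimum, so it relies on the nontrivial direction of Kantorovich--Rubinstein duality. The paper states that duality only for compact spaces, so strictly one must first restrict to the finite union of the supports.

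Your coupling uses only the definition of $W_1$ as an infimum and is constructive. It also shows exactly where $d_x,d_y\ge 2$ enters: the feasibility condition $\frac{1}{d_x}+\frac{1}{d_y}\le 1$ is precisely where the paper's absolute value switches branch. Your plan is the $\#(x,y)=0$ skeleton of the Jost--Liu transport plan described later in the chapter, which sharpens it by exploiting common neighbors at cost $0$ or $2$ instead of $3$. Paired with the paper's $1$-Lipschitz function on trees (values $3,2,1,0$), your coupling gives a matching primal--dual certificate that trees attain equality.

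Your caveat about weights is well placed, since both arguments treat only the uniform walk. With transition probabilities $p=m_x(y)$ and $q=m_y(x)$, your plan gives $W_1\le 3-2p-2q$ whenever $p+q\le 1$, not the degree formula.
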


We will introduce a proof of a variant of this statement given by Liu-Jost and an adaption of  their proof:
\begin{theorem} \textbf{Jost and Liu} \cite{jost_olliviers_2014}
On a locally finite graph $G = (V,E)$,  for any pair of neighboring vertices $x, y$ and $(A)_+ := \operatorname{max}(A,0)$, then:
\[
\operatorname{Ric_O}(x, y) \geq -2 \left( 1 - \frac{1}{d_x} - \frac{1}{d_y} \right)_+ = 
\begin{cases} 
-2 + \frac{2}{d_x} + \frac{2}{d_y}, & \text{if } d_x > 1 \text{ and } d_y > 1; \\ 
0, & \text{otherwise}.
\end{cases}
\]
\end{theorem}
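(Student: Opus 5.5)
The plan is to bound $W_1(m_x,m_y)$ from above by writing down one explicit transport plan. Here $m_x$ is the non-lazy walk, $m_x(v)=\frac{1}{d_x}$ for $v\in\Gamma(x)$, which is the case $\alpha=0$ of the Lin-Lu-Yau measure. Since $W_1$ is an infimum over couplings, any admissible coupling $A$ gives $W_1(m_x,m_y)\le \sum A(u,v)d(u,v)$. As $d(x,y)=1$, the goal becomes a cost bound
$$W_1(m_x,m_y)\le 3-\tfrac{2}{d_x}-\tfrac{2}{d_y}$$
when $d_x,d_y>1$, and a direct computation of $W_1=1$ otherwise. I treat the two cases of the statement separately.

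\emph{Degenerate case.} Suppose $d_x=1$ (the case $d_y=1$ is symmetric). Then $\Gamma(x)=\{y\}$, so $m_x=\delta_y$. Every point of the support of $m_y$ lies at distance exactly $1$ from $y$, so $W_1(\delta_y,m_y)=J(y)=1$ and $\operatorname{Ric_O}(x,y)=0$. On the right-hand side, $1-\frac{1}{d_x}-\frac{1}{d_y}=-\frac{1}{d_y}<0$, so the positive part is $0$ and the bound holds with equality. Conversely, if $d_x,d_y\ge 2$ then $\frac1{d_x}+\frac1{d_y}\le 1$, so the positive part is just $1-\frac1{d_x}-\frac1{d_y}$. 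This also confirms the case split written in the statement.

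\emph{Main case $d_x,d_y\ge 2$.} I build the coupling in three pieces.
\begin{enumerate}
\item The mass $\frac{1}{d_x}$ that $m_x$ places at $y$ is sent to vertices of $\Gamma(y)\setminus\{x\}$, at cost $1$ per unit. This is feasible because $m_y$ has total mass $1-\frac1{d_y}\ge\frac1{d_x}$ there.
\item The demand $\frac{1}{d_y}$ of $m_y$ at $x$ is supplied from $\Gamma(x)\setminus\{y\}$, at cost $1$ per unit. This is feasible because $1-\frac1{d_x}\ge\frac1{d_y}$.
\item The remaining mass $1-\frac1{d_x}-\frac1{d_y}\ge 0$ is moved from $\Gamma(x)\setminus\{y\}$ to $\Gamma(y)\setminus\{x\}$ arbitrarily, for instance proportionally (a product coupling of the normalized leftovers). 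Each such move costs at most $3$, via the path $u\to x\to y\to v$.
\end{enumerate}
The total cost is at most
$$\tfrac1{d_x}+\tfrac1{d_y}+3\bigl(1-\tfrac1{d_x}-\tfrac1{d_y}\bigr)=3-\tfrac2{d_x}-\tfrac2{d_y},$$
which gives $\operatorname{Ric_O}(x,y)\ge -2+\frac2{d_x}+\frac2{d_y}$.

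The main obstacle is bookkeeping: checking that the three pieces really form a coupling with the correct marginals. The leftover masses on each side must match (both equal $1-\frac1{d_x}-\frac1{d_y}$), and the partial transports in steps 1 and 2 must never exceed the available mass at any individual vertex. Common neighbors of $x$ and $y$ only help, since they lie in both leftover sets and can be kept at cost $0$, but they do not need special treatment for the lower bound. Local finiteness is used only so that the degrees are finite and the measures are well defined.
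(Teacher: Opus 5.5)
Your proof is correct, but it works on the primal side of the transport problem, while the paper works on the dual side. The paper invokes Kantorovich--Rubinstein duality, writes $W_1(m_x,m_y)$ as a supremum over $1$-Lipschitz $f$, and recenters each test function as
\[
\frac{1}{d_x}\sum_{z\sim x,\, z\neq y}\bigl(f(z)-f(x)\bigr)-\frac{1}{d_y}\sum_{z'\sim y,\, z'\neq x}\bigl(f(z')-f(y)\bigr)+\Bigl(1-\frac{1}{d_x}-\frac{1}{d_y}\Bigr)\bigl(f(x)-f(y)\bigr).
\]
It then bounds the three terms by $\frac{d_x-1}{d_x}$, $\frac{d_y-1}{d_y}$ and $\bigl|1-\frac{1}{d_x}-\frac{1}{d_y}\bigr|$. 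The absolute value in the last term produces the positive part automatically. So the paper needs neither a case split nor any marginal bookkeeping. The price is reliance on duality, which the paper only stated for compact spaces. That is harmless here because the measures are finitely supported, but it is an extra ingredient. Your three-stage coupling uses nothing beyond the definition of $W_1$ as an infimum over couplings, so it is more elementary and self-contained. It also makes the degenerate case exact: $W_1=1$, so equality holds whenever a degree is $1$. Finally, it is the same transport-plan template Jost--Liu use for their triangle-refined bound, so it extends directly to that refinement, where mass at common neighbours is left in place or moved at cost $2$ instead of $3$. What it costs you is the separate treatment of $\min\{d_x,d_y\}=1$, where the leftover mass $1-\frac{1}{d_x}-\frac{1}{d_y}$ would be negative, plus the vertex-by-vertex feasibility check for your first two steps, both of which you handle correctly.
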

\begin{proof}
Let $d(x,y) =1$. Then the Ollivier-Ricci curvature computation simplifies by definition to: $$
\operatorname{Ric_O} = 1 - W_1(m_x, 
m_y)
$$
We apply the Kantorvich Rubenstein duality that was introduced in Chapter 1 and take the following supremum over 1-Lipschitz functions $f$: 
\begin{align*}
W_1(m_x, m_y) &= \sup_{f, 1\text{-Lip}} \left( \frac{1}{d_x} \sum_{z, z \sim x} f(z) - \frac{1}{d_y} \sum_{z', z' \sim y} f(z') \right) \\
\end{align*}
We can use the simple trick of re-centering through $f(x), f(z)$ subtraction from $f(z)$ and the add back in to get closer to the form in the bound:
\begin{align*}
W_1(m_x, m_y) &= \sup_{f, 1\text{-Lip}} \left( \frac{1}{d_x} \sum_{z, z \sim x, z \ne y} (f(z) - f(x)) - \frac{1}{d_y} \sum_{z', z' \sim y, z' \ne x} (f(z') - f(y)) \right. \\
&\quad \left. + \frac{1}{d_x} (f(y) - f(x)) - \frac{1}{d_y} (f(x) - f(y)) + (f(x) - f(y)) \right) 
\end{align*}
where by definition of 1-Lipschitz functions we obtain:
\begin{align*}
 &\le \left| \frac{d_x - 1}{d_x} + \frac{d_y - 1}{d_y} \right| + \left| 1 - \frac{1}{d_x} - \frac{1}{d_y} \right| = 2 - \frac{1}{d_x} - \frac{1}{d_y} + \left| 1 - \frac{1}{d_x} - \frac{1}{d_y} \right| \\
&= 1 + 2\left(1 - \frac{1}{d_x} - \frac{1}{d_y} \right)_+.
\end{align*}
which therefore implies
$$
\operatorname{Ric_O}(x, y) \geq -2 \left( 1 - \frac{1}{d_x} - \frac{1}{d_y} \right)_+
$$
\end{proof}
\begin{proposition} All trees achieve this equality. 
\end{proposition}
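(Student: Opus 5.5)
The plan is to prove equality edge by edge. For an edge $xy$ of a tree $G$, the Jost--Liu theorem just proved already gives the lower bound $\operatorname{Ric_O}(x,y) \geq -2\left(1-\frac{1}{d_x}-\frac{1}{d_y}\right)_+$. So it remains to show the reverse inequality. Since $d(x,y)=1$, this is equivalent to the lower bound
$$
W_1(m_x,m_y) \geq 1 + 2\left(1-\tfrac{1}{d_x}-\tfrac{1}{d_y}\right)_+ .
$$
Here $m_x$ is the uniform measure on $\Gamma(x)$, as in the Jost--Liu proof. By the Kantorovich--Rubinstein duality, I get this lower bound by exhibiting a single $1$-Lipschitz test function $f$ and computing $\sum f\,(m_x - m_y)$.

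The key structural fact is that in a tree, deleting the edge $xy$ disconnects $G$ into two subtrees: $T_x \ni x$ and $T_y \ni y$. Every path from $T_x$ to $T_y$ passes through the edge $xy$. In the case $d_x,d_y>1$, I will take
$$
f(z) = d(z,x)+1 \ \text{ for } z\in T_x, \qquad f(z) = -d(z,y) \ \text{ for } z \in T_y .
$$
Within each subtree $f$ is a distance function plus a constant, hence $1$-Lipschitz. Across the bridge $xy$ it changes by exactly $1$. Since any path between the two sides crosses this bridge, $f$ is $1$-Lipschitz on all of $G$; checking this is the only point needing care.

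Then $f\equiv 2$ on the $d_x-1$ neighbours of $x$ other than $y$, and $f(y)=0$. Also $f(x)=1$, and $f\equiv -1$ on the $d_y-1$ neighbours of $y$ other than $x$. Hence
$$
\sum f\,dm_x - \sum f\,dm_y = \frac{2(d_x-1)}{d_x} - \frac{1-(d_y-1)}{d_y} = 3 - \frac{2}{d_x} - \frac{2}{d_y}.
$$
This gives $\operatorname{Ric_O}(x,y) \leq -2 + \frac{2}{d_x}+\frac{2}{d_y}$. When $d_x,d_y>1$ we have $1-\frac1{d_x}-\frac1{d_y}\ge 0$, so this matches the Jost--Liu bound exactly. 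In particular the borderline $d_x=d_y=2$ gives $0$ on both sides.

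In the remaining case some endpoint is a leaf, say $d_x=1$, and the claimed value is $0$. Then $m_x=\delta_y$, so $W_1(m_x,m_y)=W_1(\delta_y,m_y)=J(y)=1$, because all mass of $m_y$ sits at distance $1$ from $y$. This gives $\operatorname{Ric_O}(x,y)=0$, which also covers $G=K_2$. Combining both cases with the Jost--Liu lower bound yields equality on every edge of every tree. The main obstacle is really only choosing the test function and justifying its Lipschitz property via the bridge argument. The rest is arithmetic.
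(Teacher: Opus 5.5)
Your proof is correct and follows the paper's argument. Both use the Jost--Liu lower bound to reduce the claim to $W_1(m_x,m_y)\ge 1+2(1-\frac{1}{d_x}-\frac{1}{d_y})_+$, dispose of the leaf case via $W_1=1$, and otherwise plug a single $1$-Lipschitz test function into the Kantorovich--Rubinstein duality. Your $f$, with values $2,1,0,-1$ on $\Gamma(x)\setminus\{y\}$, $x$, $y$, $\Gamma(y)\setminus\{x\}$, is the paper's $3,2,1,0$ function shifted by a constant. Your explicit extension through the two subtrees $T_x,T_y$ makes rigorous the paper's one-line remark that uniqueness of paths in a tree lets such a function extend to all of $G$.
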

\begin{proof} Jost-Liu use the argument that given the result of Theorem 2 it is only necessary to prove that:
 \[
W_1(m_x, m_y) \geq 1 + 2\left(1 - \frac{1}{d_x} - \frac{1}{d_y}\right)_{+}
\]
Observe that if either $d_x = 1$ or $d_y = 1$, then $W_1(m_x, m_y) = 1$. Therefore, consider:
$$
1 - \frac{1}{d_x} - \frac{1}{d_y} \geq 0
$$
Our goal is to find a 1-Lipschitz function $f$ on a tree where $f(z) = 0$ if $z$ is a neighbor of $y$ that is not $x$, $f(z) = 1$ for $z = y$, $f(z) = 2$ for $z = x$, $f(z) = 3$  if $z$ is a neighbor of $x$ that is not $y$. Further, observe that any path joining two vertices on a tree is always unique, such that there is no alternative additional path between neighbors of $x$ and $y$ that exists. Thus the functions we listed can be extended to a 1-Lipschitz function for all of $G$. Again by the Kantorovich duality:
\[
W_1(m_x, m_y) \geq \frac{1}{d_x} \left(3(d_x - 1) + 1\right) - \frac{1}{d_y} (2)
\]
\[
= 3 - \frac{2}{d_x} - \frac{2}{d_y}
\]
\end{proof}
 
The last result of Jost-Liu which we will present is the following: 
\begin{theorem} \cite{jost_olliviers_2014}
\textit{Let $G = (V, E)$ be a locally finite graph. For any pair of neighboring vertices $x, y \in V$,}
\[
\kappa(x, y) \geq 
- \left( 1 - \frac{1}{d_x} - \frac{1}{d_y} - \frac{\#(x, y)}{\min\{d_x, d_y\}} \right)_{+}
- \left( 1 - \frac{1}{d_x} - \frac{1}{d_y} - \frac{\#(x, y)}{\max\{d_x, d_y\}} \right)
+ \frac{\#(x, y)}{\max\{d_x, d_y\}}.
\]
\end{theorem}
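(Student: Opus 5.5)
The plan is to prove the bound from above on $W_1(m_x,m_y)$ by writing down an explicit transport plan. We do not use the duality argument from the preceding Jost--Liu theorem, since that only gives lower bounds on $W_1$ when we exhibit a $1$-Lipschitz function.

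\textbf{Setup.} As in the proof of the previous theorem:
\begin{itemize}
\item $d(x,y)=1$, so $\kappa(x,y)=1-W_1(m_x,m_y)$;
\item $m_x$ puts mass $1/d_x$ on each neighbor of $x$, and similarly for $m_y$;
\item by symmetry of the statement we assume $d_x\le d_y$, so $\min\{d_x,d_y\}=d_x$ and $\max\{d_x,d_y\}=d_y$;
\item write $\#:=\#(x,y)$ for the number of triangles containing the edge $xy$, i.e.\ the number of common neighbors of $x$ and $y$.
\end{itemize}
Rearranged, the claim is
\[
W_1(m_x,m_y)\le 1+\Big(1-\tfrac{1}{d_x}-\tfrac{1}{d_y}-\tfrac{\#}{d_x}\Big)_+ +\Big(1-\tfrac{1}{d_x}-\tfrac{1}{d_y}-\tfrac{\#}{d_y}\Big)-\tfrac{\#}{d_y}.
\]
It therefore suffices to build a coupling $A$ whose cost $\sum A(u,v)d(u,v)$ is at most the right-hand side.

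\textbf{The plan.} Because every neighbor of $x$ is within distance $3$ of every neighbor of $y$ (via the edge $xy$), the cost of any plan has the form $3-3M_0-2M_1-M_2$. Here $M_j$ is the mass moved distance exactly $j$. So the goal is to move as much mass as possible at distances $0$, $1$ and $2$. The plan is built in the following order.
\begin{enumerate}
\item At each of the $\#$ common neighbors $z$, keep $1/d_y$ units in place (cost $0$). This is possible since $m_x(z)=1/d_x\ge 1/d_y=m_y(z)$. It leaves a surplus of $1/d_x-1/d_y$ at each such $z$.
\item Send mass at distance $1$ in two ways:
\begin{itemize}
\item the $1/d_y$ of $m_y$ sitting at $x$ is filled from neighbors of $x$ (preferably non-common ones);
\item the $1/d_x$ of $m_x$ sitting at $y$ is sent to neighbors of $y$ (preferably non-common ones).
\end{itemize}
\item Route the surplus at the common neighbors to the remaining neighbors of $y$ at distance at most $2$ (through $y$).
\item Move whatever mass remains on $N(x)\setminus(N(y)\cup\{y\})$ to $N(y)\setminus(N(x)\cup\{x\})$ at cost at most $3$.
\end{enumerate}

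\textbf{Case split.} The positive-part term signals a split on the sign of $1-\frac{1}{d_x}-\frac{1}{d_y}-\frac{\#}{d_x}$, which measures whether the "private" mass of $m_x$ outside the triangles exceeds what can be absorbed cheaply.
\begin{itemize}
\item When it is $\le 0$, the distance-$3$ phase should be empty. The cost should then equal $1+(1-\frac{1}{d_x}-\frac{1}{d_y}-\frac{\#}{d_y})-\frac{\#}{d_y}$, which comes from the distance-$0$ mass $\#/d_y$ and the distance-$1$ moves, with the rest at distance $2$.
\item When it is positive, exactly that amount must travel distance $3$. This adds one extra unit per unit of such mass, producing the $(\cdot)_+$ term.
\end{itemize}
In each case the step is to tally $M_0,M_1,M_2,M_3$ and check that $3-3M_0-2M_1-M_2$ matches the right-hand side.

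\textbf{Main obstacle.} The hardest part is the bookkeeping in step 3 and the case boundary. We must verify that the plan is a genuine coupling, with all marginals matched and no negative masses. This matters especially when $d_x=d_y$, where there is no surplus at common neighbors, and in the degenerate cases $d_x=1$ or $\#=d_x-1$.

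I would first try the greedy priority order above (distance $0$, then $1$, then $2$) and compute the masses symbolically. If the count comes out off by terms like $\#/d_x-\#/d_y$, I would adjust which side (common or non-common neighbors) supplies the distance-$1$ mass to $x$, since that choice is exactly what trades $\#/d_x$ against $\#/d_y$ in the final formula.
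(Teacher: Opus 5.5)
\textbf{The generic case is fine; the other branch of your case split cannot be completed, because the inequality as printed is false there.}

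Your generic-case plan is the Jost--Liu plan that the paper sketches, run in the opposite direction. With your normalization $d_x\le d_y$, set $A:=1-\frac{1}{d_x}-\frac{1}{d_y}-\frac{\#}{d_x}$. When $A\ge 0$ the plan gives $M_0=\frac{\#}{d_y}$, $M_1=\frac{1}{d_x}+\frac{1}{d_y}$, $M_2=\#\bigl(\frac{1}{d_x}-\frac{1}{d_y}\bigr)$ and $M_3=A$. The resulting cost $3-\frac{2}{d_x}-\frac{2}{d_y}-\frac{\#}{d_x}-\frac{2\#}{d_y}$ is exactly your target.

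The gap is in your second branch. Set $B:=1-\frac{1}{d_x}-\frac{1}{d_y}-\frac{\#}{d_y}$. There you aim for cost $1+B-\frac{\#}{d_y}$, which forces $M_2=B$. When $B<0$ this is a negative mass, and no other coupling can achieve the target either. The measures $m_x$ and $m_y$ overlap only at common neighbors, where $\min(m_x,m_y)=\frac{1}{d_y}$. So at most $\frac{\#}{d_y}$ can stay put, and every other unit moves distance at least $1$. Hence $W_1(m_x,m_y)\ge 1-\frac{\#}{d_y}>1+B-\frac{\#}{d_y}$. Equivalently, $\kappa(x,y)\le \frac{\#(x,y)}{\max\{d_x,d_y\}}$ always, while the printed right-hand side equals $-B+\frac{\#(x,y)}{\max\{d_x,d_y\}}$ in this regime.

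So the statement as printed is false whenever $B<0$:
\begin{itemize}
\item For the triangle ($d_x=d_y=2$, $\#=1$) it asserts $\kappa\ge 1$, but the paper's own example gives $\kappa=\frac12$.
\item For a single edge it asserts $\kappa\ge 1$, and for any leaf edge $\kappa\ge\frac{1}{d_y}$, whereas $\kappa=0$ in both cases.
\end{itemize}
The degenerate cases you flagged, such as $d_x=1$, are exactly where it breaks.

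\textbf{The fix.} The second bracket must also carry a positive part; this is how Jost--Liu state the result. The paper dropped the subscript, and its sketch only treats the generic case. With that correction, your argument closes after splitting your second case once more (note $A\le B$):
\begin{itemize}
\item If $A<0\le B$: send all the mass on $N(x)\setminus(N(y)\cup\{y\})$ to $x$. Cover the remaining deficit $-A$ at $x$ from the common-neighbor surplus, at distance $1$. Send the $\frac{1}{d_x}$ at $y$ to the private neighbors of $y$, and send the leftover surplus $B$ to them at distance $2$. The cost is $\frac{1}{d_x}+\frac{1}{d_y}+2B=1+B-\frac{\#}{d_y}$.
\item If $B<0$: the private neighbors of $y$ need only $\frac{1}{d_x}+B$. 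Send the excess $-B$ of the mass at $y$ to $x$, together with all of the common-neighbor surplus. Every moved unit then travels distance $1$, so $W_1=1-\frac{\#}{d_y}$ and $\kappa\ge \frac{\#(x,y)}{\max\{d_x,d_y\}}$. This is the corrected bound, since both positive parts vanish.
\end{itemize}
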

For their proof, they suppose without loss of generality  that
\begin{equation*}
    d_x = \max\{d_x, d_y\}, \quad d_y = \min\{d_x, d_y\}
\end{equation*}

\begin{figure}
    \centering
    \includegraphics[width=0.7\linewidth]{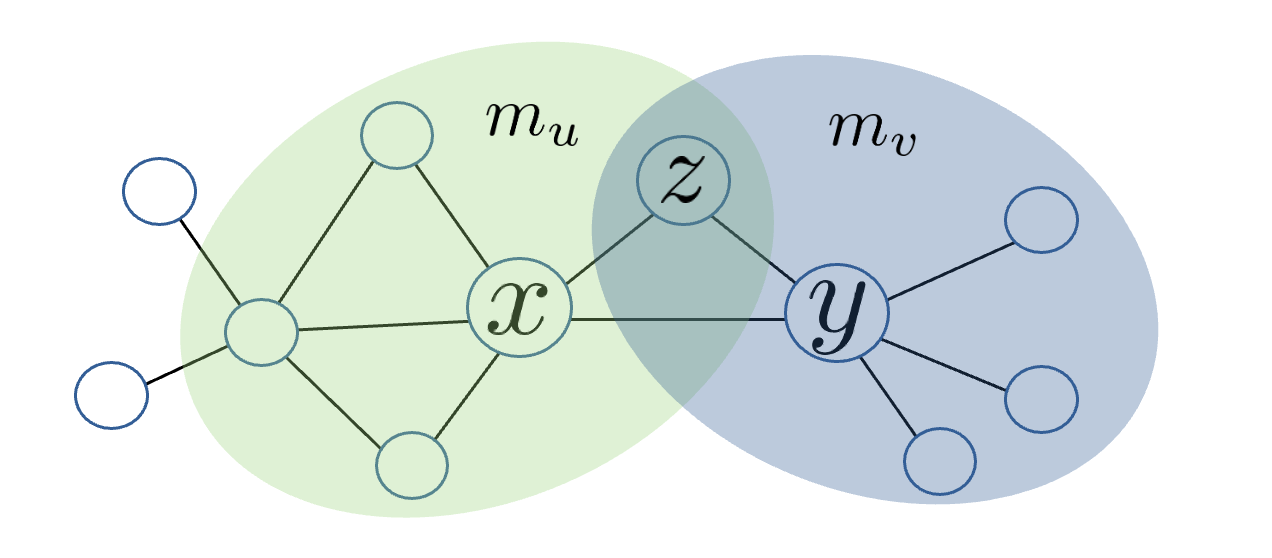}
    \caption{An example type of graph diagram used for transport plan proofs by Jost--Liu in which for an edge $xy$, vertices $x$ and $y$ also share a common neighbor $z$ to form a triangle, and where $x$ and $y$ both have non-shared neighbors. The emphasis of the triangle is used in such proofs to include positive curvature contributions of triangles.}
    \label{fig:jost-liu-transport-plan}
\end{figure}

The above assumption is due to symmetry and the fact that for the transport plan, $d_x$ will be limited by maximum degree of $y$ and $d_y$ limited by the minimum degree of $x$ due to the transport plan of flow from $x$ neighbors to $y$ neighbors. Their idealized transfer plan moving $m_x$ to $m_y$ follows the following procedure: (1) transport the mass of $\frac{1}{d_x}$ from $y$ to $y$'s own neighbors, (2) transport a mass of $\frac{1}{d_y}$ from $x$'s own neighbors to $x$, (3) fill gaps (i.e. redistribute mass) using the mass at $x$'s own neighbors. 

We can then assign the following costs based on edge distances: filling the gaps at common neighbors costs $2$ and the one at $y$'s own neighbors costs $3$. This can be easily visualized by the edge-count graph distance from $x$ neighbors to $y$ and subsequently $x$ neighbors to $y$ neighbors through the shortest path distance going through the edge $xy$. Note that if for any steps in this transport plan there are no common neighbors or $y's$ own neighbors for example, these steps are skipped. Furthermore, observe how this structure of transport plan calculates a mass transport flow from $x$ through to $y$. In the undirected case, this is suitable because by symmetry, the selection of $x$ versus is $y$ is arbitrary following a redistribution of mass, and due to the symmetry of $W_1(m_x, m_y)$. As a proof sketch, form inequalities based on which of the mass transport steps can be performed, and include the count of triangles as follows. For example: they show step (1) is only possible if and only if:
$$
1 - \frac{1}{d_x} - \frac{1}{d_y} - \frac{|T|}{\min\{d_x, d_y\}}
$$
if $|T|$ is the number of triangles for which $x,y$ are both vertices if $T$ is the set of those triangles. They then prove by the following set of inequalities: 
\[
\begin{aligned}
W_1(m_x, m_y) &\leq \frac{1}{d_x} \times 1 + \frac{1}{d_y} \times 1 + \left( \frac{1}{d_y} - \frac{1}{d_x} \right) \times |T| \times 2 \\
&\quad + \left( 1 - \frac{1}{d_x} - \frac{1}{d_y} - \left( \frac{1}{d_y} - \frac{1}{d_x} \right) \times |T| - \frac{1}{d_x} |T| \right)\times 3 \\
&= 3 - \frac{2}{d_x} - \frac{2}{d_y} - \frac{|T|}{d_y} - \frac{2|T|}{d_x}.
\end{aligned}
\]
We refer the curious reader to page 313 of \cite{jost_olliviers_2014} for the complete proof. We now present example computations of graph Ollivier-Ricci curvature that can be done by hand for small graphs, as well as computations using GraphRicciCurvature python package from Ni et al \cite{ni2019community} that validate these sample calculations. 
\section*{Example Computations}
\begin{figure}
    \centering
    \includegraphics[width=1\linewidth]{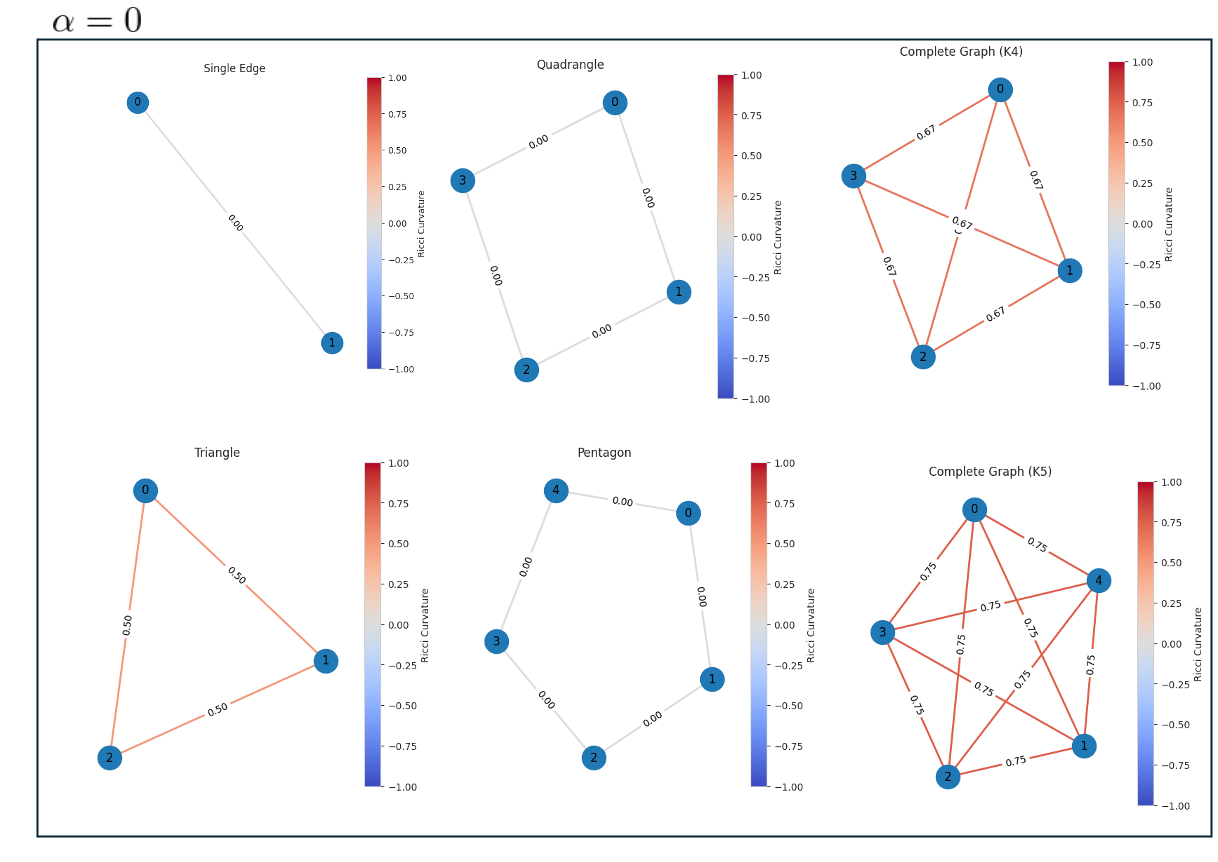}
    \caption{Example graphs with Ollivier--Ricci curvature computed and colored on edges. Here, $\alpha=0$ refers to zero probability of a random walk being stationary at the starting vertex.}
    \label{fig:orc-samples-alpha0}
\end{figure}

\begin{figure}
    \centering
    \includegraphics[width=1\linewidth]{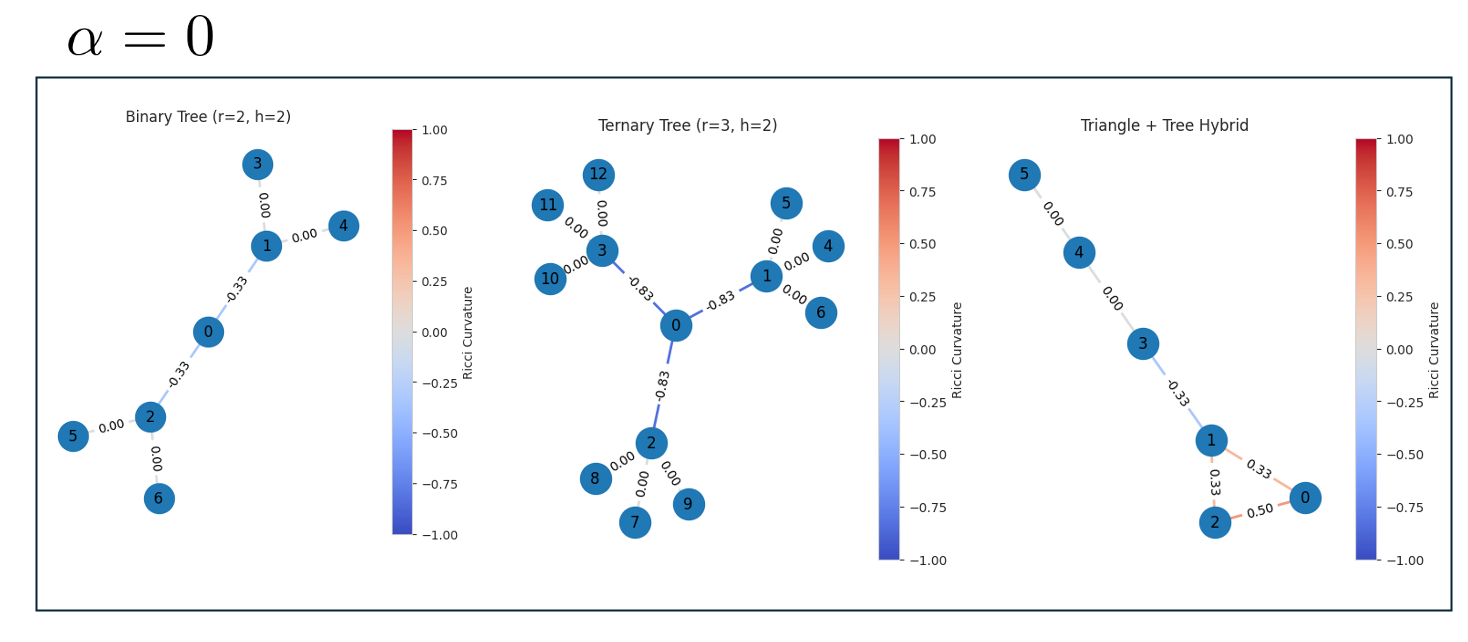}
    \caption{Tree graphs with Ollivier--Ricci curvature computed and colored on edges. Here, $\alpha=0$ refers to zero probability of a random walk being stationary at the starting vertex.}
    \label{fig:orc-tree-samples-alpha0}
\end{figure}

\begin{figure}
    \centering
    \includegraphics[width=1\linewidth]{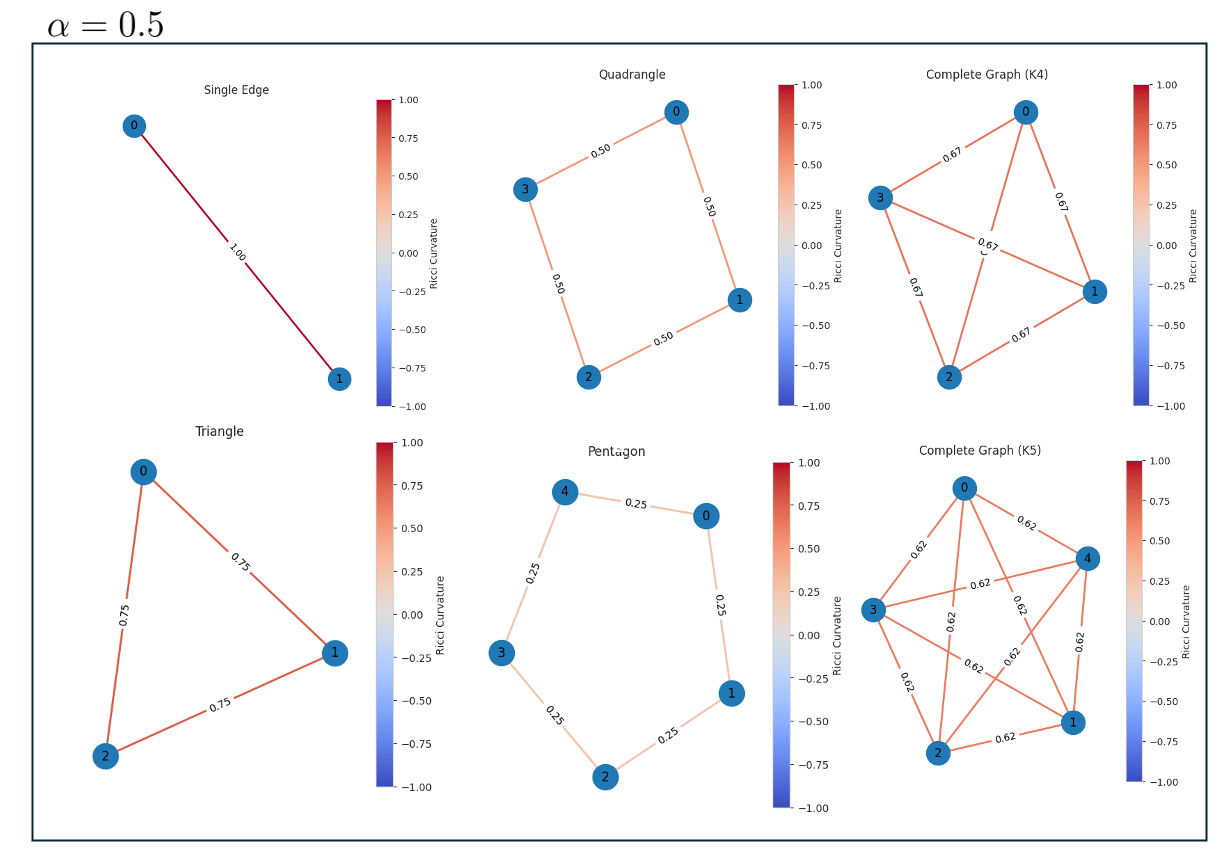}
    \caption{Example graphs with Ollivier--Ricci curvature computed and colored on edges. Here, $\alpha=0.5$ refers to $0.5$ probability of a random walk being stationary at the starting vertex.}
    \label{fig:orc-samples-alpha05}
\end{figure}

\begin{figure}
    \centering
    \includegraphics[width=1\linewidth]{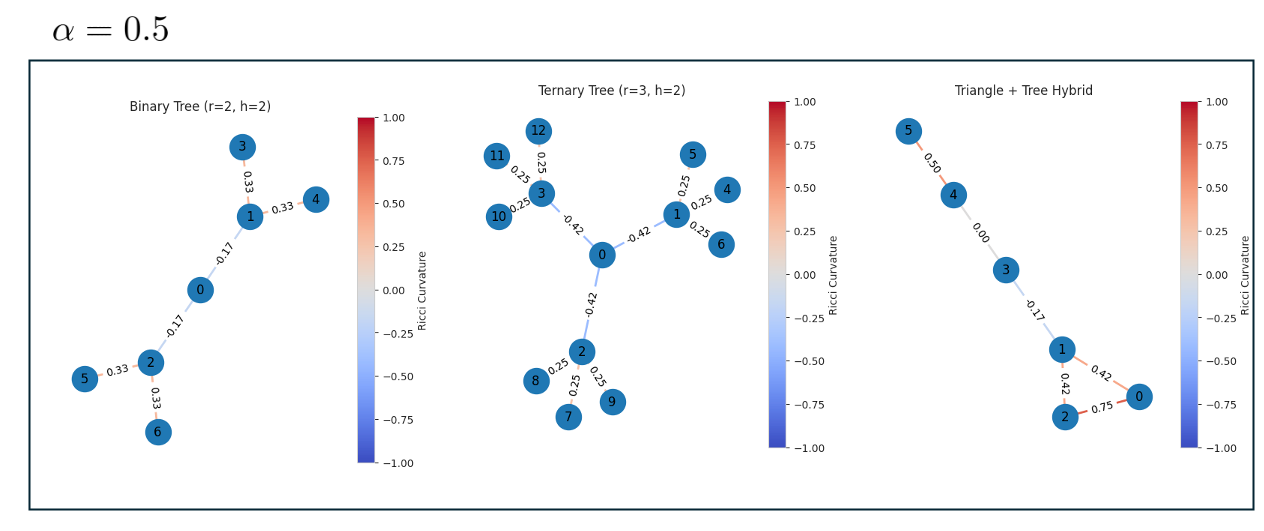}
    \caption{Tree example graphs with Ollivier--Ricci curvature computed and colored on edges. Here, $\alpha=0.5$ refers to $0.5$ probability of a random walk being stationary at the starting vertex.}
    \label{fig:orc-tree-samples-alpha05}
\end{figure}

\begin{example} \label{example1}(Single edge: $\alpha = 0$)
Consider a graph consisting of a single edge connecting two nodes, labeled $0$ and $1$. In this case, each node has only one neighbor. For $\alpha = 0$, the probability measure $m_0$ at node $0$ places all mass on its neighbor: $m_0 = \delta_1$, and similarly $m_1 = \delta_0$. The 1-Wasserstein distance between $m_0$ and $m_1$ is $W_1(m_0, m_1) = 1$, since the full unit of mass must be transported across a single edge of length 1. Thus, the Ollivier--Ricci curvature is $\kappa(0,1) = 1 - 1 = 0$.
\end{example}
\begin{example} (Single edge: $\alpha = 0.5$)
For $\alpha = 0.5$, each node retains half of its probability mass and distributes the other half to its neighbor. Therefore, $m_0 = 0.5\delta_0 + 0.5\delta_1$ and $m_1 = 0.5\delta_1 + 0.5\delta_0$. Since these distributions are identical, the Wasserstein distance is zero, and the resulting curvature is $\kappa(0,1) = 1 - 0 = 1$.
\end{example}

\begin{example}(Triangle: $\alpha = 0$)
Now consider the triangle graph, a 3-cycle with nodes $\{0,1,2\}$ and edges connecting each pair. We examine the curvature on edge $(0,1)$. For $\alpha = 0$, each node distributes its full mass evenly among its two neighbors. Hence, $m_0 = 0.5\delta_1 + 0.5\delta_2$ and $m_1 = 0.5\delta_0 + 0.5\delta_2$. An optimal transport plan moves $0.5$ units of mass from node $1$ to node $0$ (cost $1$), and $0.5$ units of mass from node $2$ to itself (cost $0$), resulting in $W_1 = 0.5$ and curvature $\kappa(0,1) = 1 - 0.5 = 0.5$.
\end{example}
\begin{example}  \label{example2}
(Triangle: $\alpha = 0.5$)
For $\alpha = 0.5$, each node retains half of its mass and distributes the other half equally among its neighbors. The resulting measures are $m_0 = 0.5\delta_0 + 0.25\delta_1 + 0.25\delta_2$ and $m_1 = 0.5\delta_1 + 0.25\delta_0 + 0.25\delta_2$. An optimal transport plan sends $0.25$ units of mass from node $0$ to node $1$ (cost $1$), while all remaining mass matches identically. Therefore, $W_1 = 0.25$, yielding curvature $\kappa(0,1) = 1 - 0.25 = 0.75$.
\end{example}

\chapter{Considerations for Ollivier-Ricci Curvature of Directed Graphs}

\section*{The Challenges of Directionality}

As discussed in previous chapters, a central feature of the 1-Wasserstein distance that allows ease of use on undirected graphs is that it is a true distance metric and is therefore symmetric such that for some vertices $x,y$ we have:
$$
W_1(m_x,m_y) = W_1(m_y,m_x)
$$
In other words, the cost of transporting $m_x$ to $m_y$ is equivalent to the cost in the opposite direction from $m_y$ to $m_x$. However, if you consider the classic transport plan set-up and extend it to directed graphs, then observe that transport costs are not going to be symmetric and necessarily by definition of a directed graph prohibiting flow from $y$ to $x$ on an edge $x \rightarrow y$ we have that for all $x,y \in V(D)$ of a directed graph that:
$$
W_1(m_x,m_y) \neq W_1(m_y,m_x)
$$
Furthermore, we must abide by the directionality of an edge in the transport plan such that if an edge begins at $x$ and terminates at $y$, then no transport can ever occur $y \rightarrow x$ by definition. Furthermore, consider the geometric feautures of a graph utilized by Lin-Lu-Yau and Jost-Liu in the undirected graph settin for proving results regarding Wasserstein distance or Ollivier-Ricci bounds: number of triangles in a graph or whether or not a graph or subgraph are trees are some of the most important examples. In these cases, edge directionality adds an immensely complex layer of asymmetry and, furthermore, classic graph features such as cycles and trees have sub-types such as those shown in the below Figure.

Observe that a $3-$cycle in an undirected graph is simply a triangle and we can treat all triangles the same when computing mass transport. However, we may have a triangle in a \textit{directed} graph that is acyclic in the sense that all three edges do not follow the same direction such that we obtain to colliding edge heads or adjacent tails. We will formalize this idea with regards to \textit{effective length} in this chapter. Furthermore, when it comes to tree-like behavior in directed graphs, the underlying undirected graph may be a tree, however the directed graphs edges can either be all \textit{in-branching} or pointing intowards the root vertex, \textit{out-branching} or all pointing outwards away from the root, or a combination of edges pointing towards and away from a root, completely blocking transfer of mass from a leaf to a root for example. We will formalize the importance of these examples when proving case-by-case bounds for Ollivier-Ricci curvature for directed graphs of different cyclical and tree-like properties. First, however we will discuss the limited current reseedgeh on Ollivier-Ricci curvature for directed graphs.

\begin{figure}
    \centering
    \includegraphics[width=0.75\linewidth]{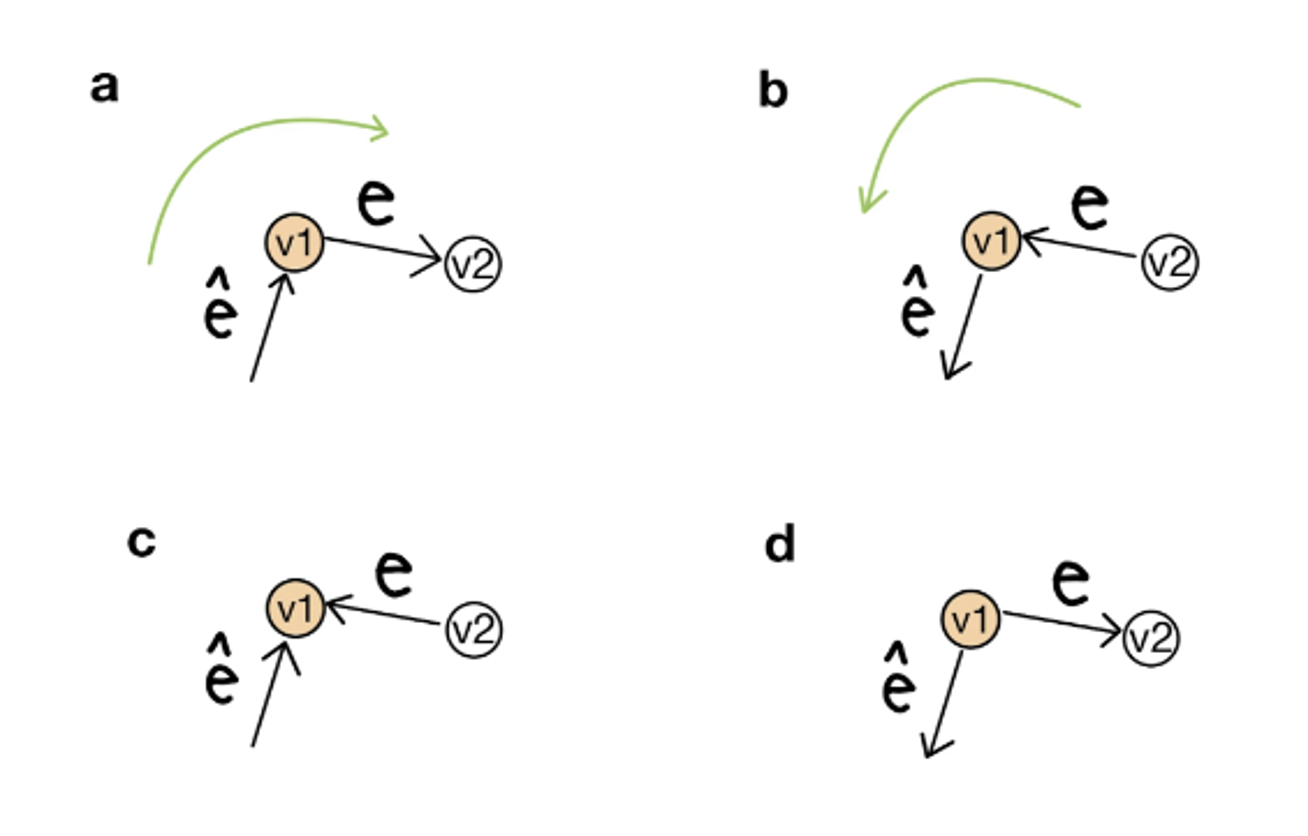}
    \caption{Directionality of edges can increase complexity with regards to mass transport or flow through a vertex, resulting in more possible transport plans and potential for mass blockage, bottlenecks, or sinks. (a) Continuous directed flow from $\hat{e}$ to $e$, (b) Continuous directed flow from $e$ to $\hat{e}$, (c) Restriction to only inward flow to $v_1$, (d) Restriction to only outward flow from $v_1$.}
    \label{fig:directionality-impacts}
\end{figure}
\section*{An Introduction to Directed Graphs}
Directed graphs, known as \textit{digraphs}, are the primary focus of this chapter and connections between digraphs and discrete is the focus of this thesis. Here we formalize necessary notation and definitions regarding digraphs to begin proving relevant results. This chapter and all work in directed graphs seeks to study this crucial difference between undirected edges and directed edges and the implications directionality has for graph structure and graph properties.

\begin{figure}
    \centering
    \includegraphics[width=0.4\linewidth]{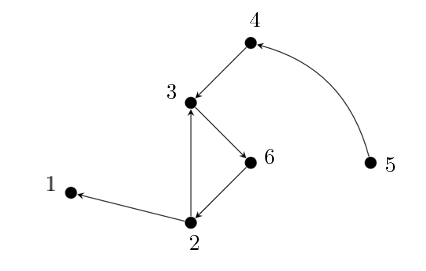}
    \caption{A simple digraph.}
    \label{fig:digraph1}
\end{figure}

\begin{definition}
    A \textbf{directed graph} or \textit{digraph} $D = (V, A)$ is a non-empty finite set $V$ of \textit{vertices} and a finite set $A$ of ordered pairs of distinct vertices called \textit{edges}. We denote the vertex set as $V(D)$ and the edge set of D as $E(D)$ where \textit{edge} $(u,v)$ connecting vertices $u$ and $v$ in a digraph $D$ begins at $u$ and terminates at $v$.
\end{definition}

In the definition above, $u$ acts as the starting vertex so it is called the \textit{tail} of the edge and $v$ is called the \textit{head}, and both $u$ and $v$ are therefore called the \textit{end-vertices} of the edge. There are a few other terms that are used to describe the head and tail of an edge. For example, $u$ is said to dominate $v$. The head and tail of an edge are said to be \textit{adjacent} vertices. It is permissible for a digraph to have an edge $(u,v)$ and an edge $(v,u)$ in which two vertices are connected by opposite edges of opposite direction. Just as with undirected graphs, it is crucial to specify neighborhoods of vertices when working with digraphs. However, if a vertex v is an \textit{end-vertex} of two edges $(u,v)$ and $(v,w)$, then the relationships between its two neighbors $u$ and $w$ are fundamentally different. Here, $u$ is an \textit{in-neighbor} of $v$ because the edge is pointing from the head $u$ and into the tail $v$. We call $w$ an \textit{out-neigbor} of $v$ because the edge is pointing out of $v$ and into $w$. Therefore, for a digraph $D$, the \textit{in-neighborhood} of a vertex $v$ is the set of all \textit{in-neighbors} of $v$ and is denoted as $\mathcal{N}^{in}(v)$ or $\mathcal{N}^{-}(v)$, and  $\mathcal{N}^{in/-}_D(v)$ if we wish to specify the neighborhood is in D. We will use the notation $\mathcal{N}^{in}(v)$ in this thesis. Similarly, the \textit{out-neighborhood} of $v$ is the set of all \textit{out-neighbors} of $v$ and is denoted as $\mathcal{N}^{out}(v)$ or $\mathcal{N}^{+}(v)$. Just as with neighborhoods, we also specify the degree of a vertex based on in-going edges and out-going edges. We define the \textit{in-degree} of a vertex $v$ as the number of edges that point \textit{inwards} into $v$, or in other words, the number of edges for which $v$ is the head. This is denoted by $d^{-}(v)$ or $d^{in}(v)$. We define the \textit{out-degree}, of a vertex $v$ as the number of edges that point \textit{outwards} from $v$, or in other words, the number of edges for which $v$ is the tail. This is denoted by $d^{+}(v)$ or $d^{out}(v)$. 
\begin{definition}
A digraph is \textit{simple} if it does not have \textit{loops} or \textit{multiple edges}. 
\end{definition}
A simple digraph is not permitted to have two or more edges with the same tail and same head, which are called parallel or multiple edges. We would call this type of digraph a \textit{directed multigraph}. Furthermore, it is not permissible for a simple digraph to have a self-loops where a vertex $u$ is both the tail and head of the edge $(u,u)$. We would call this type of digraph a \textit{directed pseudograph}. An \textit{oriented graph} is a special type of digraph that has no pair of edges $(u,v)$, $(v,u)$. Figure X depicts examples of  digraphs, oriented graphs and examples of non-digraphs such as directed multigraphs and directed pseudographs. Much of the theory for directed graphs has been developed by considering manipulations of directed multigraphs and directed pseudographs, so we will discuss at length these types of graphs in this chapter. 

\begin{figure}
    \centering
    \includegraphics[width=0.7\linewidth]{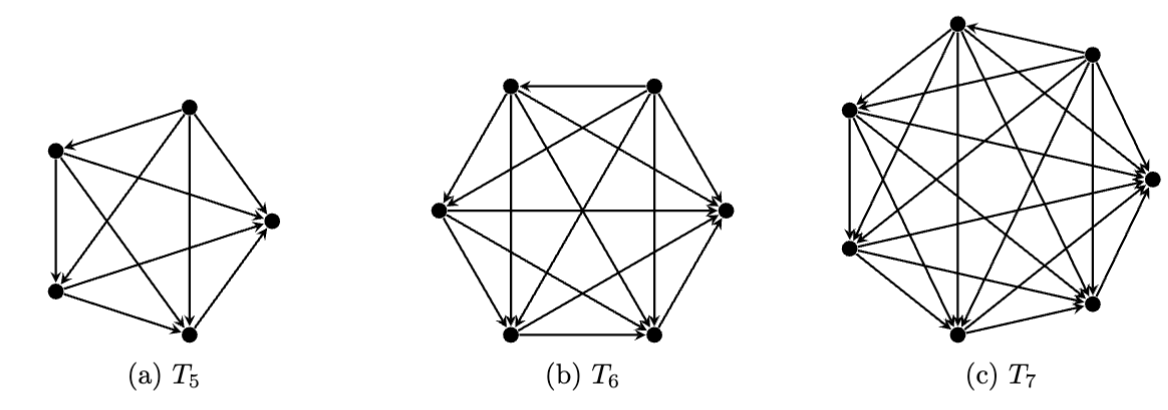}
    \caption{Tournaments}
    \label{fig:tournament}
\end{figure}
Define the graph $D = (V,E)$ to be a directed pseudograph such that self-loops and parallel edges are allowed. Let $a_i \in E(D)$ be edges and $v_j \in V(D)$ be vertices where for a given $a_j$ edge, its tail is defined as $x_j$ is the tail and $x_{j+1} $ is the head for all $j \in [k-1]$. A walk on $D$ is defined as the sequence $W = v_1 a_1 v_2 a_2 v_3....v_{k-1}a_{k-1}v_k$. We call a \textit{walk} or \textit{oriented walk} from $v_1$ to $v_k$ walk a \textit{$(v_1, v_k)-walk$} and call $W$ a \textit{open walk} if $v_1 \neq v_k$, and a \textit{closed walk} if $v_1 = v_k$. The \textit{length} of $W$ is the number of vertices, and the \textit{initial vertex} of $W$ is $v_1$and \textit{terminal vertex} is $v_k$ if $W$ is not closed. If the edges of $W$ are distinct we call this a \textit{trail}, and if the vertices of $W$ are distinct we call this a \textit{directed path} or \textit{dipath}. Now, consider when we may have a directed cycle, or \textit{dicycle}. Let us have the vertices of a dipath $P$ be $V(P) = \{v_1, v_2, v_3,...,v_{k-1}\}$. If all $v \in V$ are distinct and $v_1 = v_k$ we have a cycle only when $k \geq 3$. An \textit{acyclic digraph} contains no dicycles. We call an \textit{ordering} of vertices $v_1,v_2,..,v_n$ an \textit{acylic ordering}. A digraph is defined as a \textit{tournament} $T$ if each pair of distinct vertices $x$, $y$ is connected by exactly one edge, where exactly one of the possible edges $xy$ or $yx$ exists. A tournament which is acyclic is called a \textit{transitive tournament}.

\begin{figure}
    \centering
    \includegraphics[width=1\linewidth]{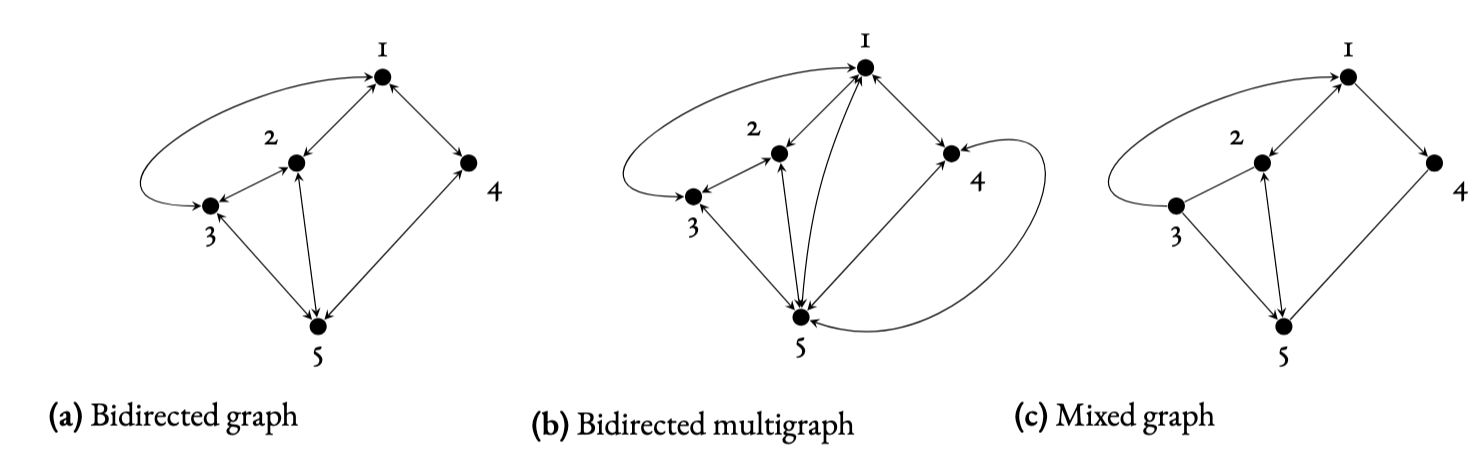}
    \caption{Bidirected graph, bidirected multigraph, and mixed graph. We will discuss primarily only simple directed graphs in this section, but directed networks can be of any number of these forms.}
    \label{fig:bidirected-multigraph-mixed}
\end{figure}

\section*{Existing Notions of Directed Graph ORC}

Because of the challenges in working with directed graphs with the Wasserstein-1 distance, applications of Ollivier-Ricci curvature to directed graphs is very minimally studied despite the extensive reseedgeh published for the undirected graph setting, for example Lin-Lu-Yau and Jost-Liu being some of the earliest to build upon Ollivier's work for undirected graphs as discussed previously. Challenges in asymmetry have made directed graphs similarly understudied for other notions of discrete Ricci curvature, but there are a select few paper such as those on directed Forman-Ricci curvature which explore these ideas \cite{forman_directed}, \cite{paredes2021forman}. To our knowledge, there are only two papers which have attempted to define a notion of Ollivier-Ricci curvature on directed graphs. The first by T. Yamada \cite{yamada2016ricci}, and the second by \cite{olli_forman_directed} as an extension of \cite{forman_directed}. The second of these papers focused on directed networks and argues that directionality can be captured by calculating Ollivier-Ricci curvature as normal for an undirected edge, $e$, but then synthetically assigning a summation of all Ricci curvature values from inward edges to the respective vertex, and doing the same for outward edges \cite{olli_forman_directed}:
\[
\operatorname{Ric_O}_{\text{in}}(v) = \sum_{a \in E_{\text{in},v}} \operatorname{Ric_O}(e);
\quad\quad\quad\quad\quad\quad\quad 
\]
\[
\operatorname{Ric_O}_{\text{out}}(v) = \sum_{a \in E_{\text{out},v}} \operatorname{Ric_O}(e);
\quad\quad\quad\quad\quad\quad\quad 
\]
The paper introducing this notion does not provide any theoretical results on this work. Though it may be a functional notion for some limited algorithmic tasks on networks, we believe it in no way captures true Ollivier--Ricci curvature. Furthermore, it is a vertex-level notion which is not characteristic of Ollivier, and as a result, will fail to represent realistic mass transport through directed edges via a modified 1-Wasserstein distance.

The Yamada paper on directed Ollivier-Ricci curvature \cite{yamada2016ricci} proposes a slight modification of the Lin-Lu-Yau notion of ORC on undirected graphs, and applies this modification to the directed graph setting. This modification is only of the definition of the measure. Recall that Lin-Lu-Yau defines a probability measure $m_x^\alpha$ as follows: 
$$
m_x^{\alpha}(v) = 
\begin{cases}
\alpha & \text{if } v = x, \\
\frac{1 - \alpha}{d_x} & \text{if } v \in \Gamma(x), \\
0 & \text{otherwise}.
\end{cases}
$$
Yamada modifies this to be:
$$
m_x^{\alpha}(v) = 
\begin{cases}
\alpha, & \text{if } v = x, \\
\frac{1 - \alpha}{d_x^{\text{out}}}, & \text{if } (x, v) \in E, \\
0, & \text{otherwise}.
\end{cases}
$$
for application only in well- connected directed graphs. For the probability measure or random walk centered at any given $x$, the modification captures the out-degree $x$, capturing the transport of all mass transport exiting vertex $x$ outwards. We observe that this does not capture in degrees or inward pointing edges entering any given $x$, and therefore fails to capture the a true Ollivier-Ricci curvature notion on directed graphs. In the next subsection of this chapter we will now introduce our novel proposed notion of directed graph Ollivier-Ricci curvature, and various theoretical results that can be found for directed cycles and trees. Furthermore, we will introduce our experimental findings for the novel curvature in Chapter 6 after introducing the respective algorithms and approaches for studying a novel curvature on large directed graphs and networks.

\section*{Proposals and Considerations for Future Work on Directed ORC}

In this sub-section, we present various ideas regarding the potential challenges and feature that a notion for Ollivier-Ricci curvature notion on directed graphs should have. In particular, such a curvature notion should be distinct in many acyclic or tree-like cases from an undirected graph edge, due to the additional constraints and bottlenecks that directed edges impose on optimal transport plans. Note that I present an example notion of a potential measure and example transport plans, but this is in no way comprehensive and requires rigorous further theoretical work to verify the theoretical bounds of the notion given, the connection of the notion to theoretical length, and furthermore, to verify the costs and optimality of various ideas stated in the theoretical transport plans. The most essential aspect of this sub-section is that we provide examples in which asymmetry and the added combinatorial possibilities for directed paths and mass flow on strictly directed graphs (i.e. not mixed or bidirected) that should be considered in future work for the development of a formal method that accurately captures properties of Ollivier-Ricci curvature. 

Note that $\overline{W}$ will no longer be a proper metric to to asymmetry and possibility of not being defined on a directed graph.

For all in-neighbors $z$ of $x$, we can define  the new measure:
$$
m_x(v) = \frac{1}{d_x^{out}}
$$
without the lazy-random-walk and Lin-Lu-Yau notion presented by Yamada. In an analogous by directed method of the mass transport plans presented by Jost-Liu for undirected graphs, we can coordinate a mass transport plan on directed graphs as follows: 

\begin{algorithm}
\caption{Directed Transport Plan}
\begin{algorithmic}[1]
\Procedure{MassTransport}{$x$, $y$, $d_x$, $d_y$}
    \If {$\overrightarrow{xy} \in$ 3-dicyle} 
        \State Move mass of $m_x$ from $y$ to its out-neighbors if they exist.
        \State Move mass of $m_y$ from $x$'s out-neighbors to $x$ if they exist. 
        \State Fill gaps using mass at $x$'s in-neighbors or x,y if not.
        \State \quad Cost for common neighbors is length of \textbf{shortest dipath} from x in-neighbors to common. Cost for $y$'s neighbors is length of \textbf{shortest dipath} from x in-neighbors to y. 
    \Else
        \State Follow identical procedure for acyclic triangles and non-triangles, where no mass transfer occurs against opposite direction edge.
    \EndIf
    No further possible edges to pass mass through. 
\EndProcedure
\end{algorithmic}
\end{algorithm}
where we formalize this by the distribution of measures around $x$. 
\begin{definition}
    For a cycle $C$ in a directed graph $G$, with edges of any direction, the effective length of $C$ is defined as the absolute difference between the number of edges oriented in one direction and the number of edges oriented in the opposite direction. 
\end{definition}
\begin{figure}[htbp]
    \centering
    \begin{tabular}{c}
        \includegraphics[width=.7\textwidth]{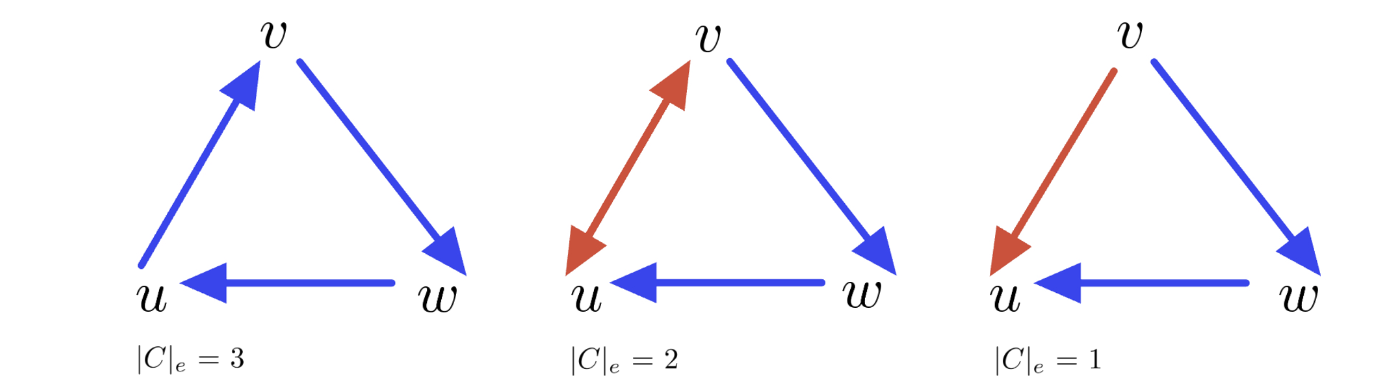} 
    \end{tabular}
    \caption{Effective length 3 (left) through 1 (right) for directed triangles.}
    \label{fig:effective-length-triangles}
\end{figure}
\begin{figure}[htbp]
    \centering
    \begin{tabular}{c}
        \includegraphics[width=.7\textwidth]{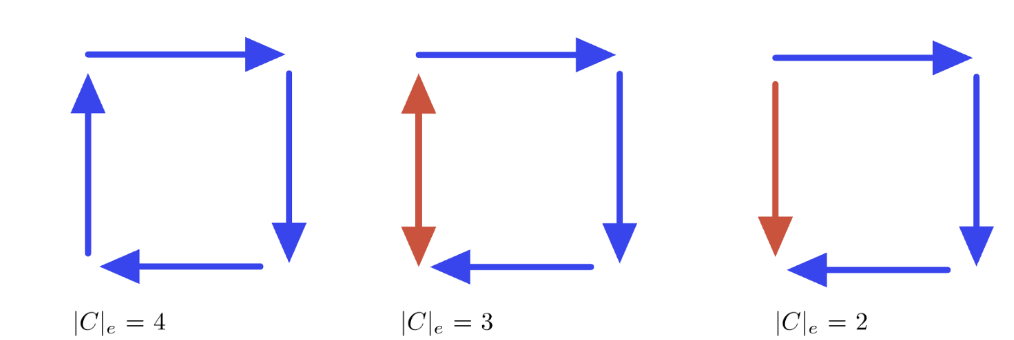} 
    \end{tabular}
    \caption{Variety of effective length values for directed quadrangles.}
    \label{fig:effective-length-quadrangles}
\end{figure}
\noindent

We will adjust this transfer plan. Observe that in the directional case we have extensive combinations of possible transport plan changes even if the number of edges of the origial Jost and Liu example stays fixed.

Observe as depicted in Figure 6 that we will sometimes have to fill mass from $y$ to $y$'s common neighbors with $x$ \textit{directly}. Let's address the first step and consequences of its failure: suppose that $y$ has no out-neighbors. Then mass of $m_x$ cannot be moved from $y$ to neighbors. If $y$ has no neighbors then we are done. If $y$ has only in-neighbors then we can treat this as if $y$ has no neighbors and we are done. Suppose that there is no directed path from $x$ to its common neighbor $z$ and no directed path from $y$ to $z$. Then we cannot transport any mass along this path. Follow the same procedure as $x$. However, if there is a directed path, the length of this new path will be the cost. For example, to transport from an $x$-neighbor in Figure 6 to $z$, the cost is 3, which is the same cost as transporting to a $y$-neighbor from an $x$-neighbor. Therefore, our new transport plan will be entirely based upon scanning possible directed paths. If a dipath exists we compute its length, and if it does not then the cost is insurmountable.

\vspace{0.5cm}

There are two possible cases for $|C_e| = 3$ with only directed edges and not bidirected (i.e., a directed 3-cycle). These are depicted in Figure 5.

\textbf{3-Cycle Cases:}
\begin{enumerate}
    \item $\overrightarrow{xy} \in (xyz)$ directed 3-cycle. 
    \item $\overrightarrow{yx} \in (xyz)$ directed 3-cycle. 
\end{enumerate}
\begin{claim}
So long as $d_{y(out)} > 0$, the cost of moving mass $\frac{1}{d_x}$ from $y$ to $y$'s own  neighbors is $c = 1$. Furthermore, so long as $d_{x(in)} > 0$, the cost of moving mass $\frac{1}{d_y}$ from $x$'s own neighbors to to $x$ is $c = 1$.
\end{claim}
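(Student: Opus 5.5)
The claim is really a statement about directed distances: in the directed transport plan, the cost of moving a unit of mass from $u$ to $w$ is the length of the shortest dipath from $u$ to $w$ (and $+\infty$ if none exists). So it suffices to show that, under the stated degree conditions, the relevant mass moves only across a single directed edge.

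For the first assertion, assume $d_{y(out)}>0$ and pick any out-neighbor $w$ of $y$. The edge $(y,w)$ is a dipath of length $1$. It is also a shortest one, because $w\neq y$ (digraphs here are simple, with no loops), so every dipath from $y$ to $w$ has length at least $1$. We then split the mass $\frac{1}{d_x}$ sitting at $y$ among the out-neighbors of $y$, in whatever proportions the gap-filling step needs. Each fraction travels along one edge of cost $1$, so the total cost is $\frac{1}{d_x}\cdot 1$. Normalizing per unit of mass, $c=1$.

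For the second assertion, assume $d_{x(in)}>0$. The mass that must arrive at $x$ is drawn from its in-neighbors $z$. Each edge $(z,x)$ is a dipath of length $1$, and as before no shorter dipath exists since $z\neq x$. Sending a total of $\frac{1}{d_y}$ from in-neighbors of $x$ to $x$ therefore costs $\frac{1}{d_y}\cdot 1$, giving $c=1$ per unit.

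The main obstacle is definitional rather than computational.

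\begin{itemize}
\item The claim speaks of "$x$'s own neighbors" and of moving mass to $x$. Transport into $x$ is only possible along in-edges, which forces the source neighbors to be in-neighbors. This is exactly why the hypothesis is $d_{x(in)}>0$, and I would state that reading explicitly.
\item In the measure $m_x(v)=1/d^{out}_x$ above, the mass of $m_x$ lives on out-neighbors of $x$. So I must check that the in-neighbors used as sources actually carry enough mass. In a 3-dicycle they do: if $\overrightarrow{xy}$ lies on the cycle $(xyz)$, then $z$ is simultaneously an out-neighbor of $y$ and an in-neighbor of $x$.
\end{itemize}

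If the mass budget fails, I would weaken the statement. The claim then becomes that the cost per unit of mass actually moved along these steps is $1$, which is precisely what the dipath-length argument gives.
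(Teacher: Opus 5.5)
The paper gives no proof of this claim. It sits in an explicitly speculative section and rests only on the convention stated just before it: a transfer costs the length of a shortest dipath. Your single-edge argument makes that convention precise, and for the first assertion it is correct. It can even be made plan-independent. For the non-lazy out-degree measure, $m_y(y)=0$ and every point of $\operatorname{supp} m_y=\mathcal{N}^{out}(y)$ lies at directed distance exactly $1$ from $y$. So \emph{every} coupling pays exactly $m_x(y)\cdot 1$ for the mass sitting at $y$, and no choice of proportions is needed.

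The gap is in the second assertion, specifically your mass-budget check, which confuses the source and target measures.
In the 3-dicycle $x\to y\to z\to x$, the fact $z\in\mathcal{N}^{out}(y)$ only says $m_y(z)>0$, i.e.\ $z$ is where mass must \emph{arrive}. It says nothing about $m_x(z)$. This section works with strictly directed (oriented) graphs, so the edge $(z,x)$ excludes $(x,z)$, and hence $m_x(z)=0$. In the bare 3-dicycle, $m_x=\delta_y$ and $m_y=\delta_z$, and the entire transport is the single move $y\to z$.

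In general, take an oriented edge $x\to y$ with your out-degree measure. No in-neighbor of $x$ carries $m_x$-mass, and $m_y(x)=0$ because $(y,x)\notin E$. So the second step has neither supply nor demand: it is vacuous, not a step of cost $1$.

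The two halves of the claim are really tailored to different conventions. With out-neighbor measures, only the first step carries mass. With in-neighbor measures ($m_v$ uniform on $\mathcal{N}^{in}(v)$ for all $v$), one has $m_x(y)=0$ but $m_y(x)>0$. Then your argument for the second assertion works verbatim, again independently of the plan, since all of $\operatorname{supp} m_x=\mathcal{N}^{in}(x)$ is one edge away from $x$.

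You should either state the two assertions under these two conventions, or say explicitly that the second is vacuous for the out-degree measure. Your fallback, ``cost per unit of mass actually moved,'' is the right reading and should replace the 3-dicycle check rather than back it up.
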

\begin{claim}
Filling gaps at common neighbors with $x$ neighbors is always: 
$$
2 \leq c \leq 3
$$
with cases 1 and 2 as bounds. 
\end{claim}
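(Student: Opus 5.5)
The claim concerns the cost of filling gaps at a common neighbor $z$ of $x$ and $y$ using mass located at $x$'s own neighbors, in the two directed 3-cycle configurations. My plan is to treat the cost of each elementary move as the length of the shortest dipath from source to target, as prescribed in the Directed Transport Plan algorithm. I would then compute that dipath length separately in Case 1 ($\overrightarrow{xy}$ lies in the directed 3-cycle $x\to y\to z\to x$) and Case 2 ($\overrightarrow{yx}$ lies in the directed 3-cycle $y\to x\to z\to y$). The two inequalities $2\le c\le 3$ should then come from one case giving the lower value and the other the upper value.

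\textbf{Lower bound.} I would first argue that $c\ge 2$ in any configuration. A source vertex $w$ is one of $x$'s own neighbors, so $w\ne z$ and $w$ is not adjacent to $z$ (otherwise $w$ would be a common-type vertex). The undirected distance satisfies $d(w,z)\ge 2$, and every dipath is in particular an undirected path. Hence the directed cost is at least the undirected Jost--Liu cost of $2$.

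\textbf{Upper bound.} For the upper bound I would exhibit explicit dipaths. Mass at $x$'s neighbors that can be routed must enter $x$, so the relevant sources are in-neighbors $w\to x$; this matches the earlier claim that moving mass from $x$'s own neighbors to $x$ requires $d_{x(in)}>0$.
\begin{itemize}
\item In Case 2 the edge $x\to z$ exists, so $w\to x\to z$ is a dipath of length $2$, which gives $c=2$.
\item In Case 1 the only edges at $z$ inside the triangle are $y\to z$ and $z\to x$. The shortest route through the triangle is therefore $w\to x\to y\to z$, of length $3$, which gives $c=3$. It matches the cost of reaching $y$'s own neighbors noted in the discussion preceding the claim.
\end{itemize}
Combining these two computations gives $2\le c\le 3$, with Case 2 attaining the lower bound and Case 1 the upper bound.

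\textbf{Main obstacle.} The difficulty is that the claim is not literally true for arbitrary digraphs, so I would need to make the intended scope precise. Two failure modes must be excluded:
\begin{itemize}
\item Shortcut dipaths from $w$ to $z$ that avoid $x$ could make $c<3$ in Case 1. These must be excluded, or the claim read as "via the triangle"; note that such a shortcut is still of length $\ge 2$, so the lower bound survives.
\item If no dipath exists, the cost is infinite by the convention stated earlier, so $c\le 3$ fails.
\end{itemize}
I would therefore state the claim under the hypotheses that the relevant sources are in-neighbors of $x$ and that the local structure is the Jost--Liu configuration with no extra edges. Under those hypotheses the case analysis above is complete.
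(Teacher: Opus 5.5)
Your approach is the reasoning the paper relies on. It gives no separate proof beyond the Directed Transport Plan rule ("cost is the length of the shortest dipath from $x$'s in-neighbors") and the case computations that use the claim. Taking $c$ to be that dipath length and evaluating it in both orientations of the 3-cycle is right, and in the bare configuration your dipaths are correct: $w\to x\to y\to z$ in Case 1 and $w\to x\to z$ in Case 2.

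\textbf{The lower-bound argument is wrong.} A neighbor $w$ of $x$ is one of $x$'s own neighbors as long as it is not adjacent to $y$. Being adjacent to $z$ does not make it a common neighbor of $x$ and $y$. So an edge $w\to z$ is allowed, and it gives $c=1$ in either orientation of the triangle. Your later remark that any shortcut still has length at least $2$ fails for the same reason. The appeal to the undirected cost is also backwards: in Jost--Liu the value $2$ is an upper estimate $d(w,z)\leq 2$, used to bound $W_1$ from above, not a lower bound on $d(w,z)$. Hence $c\geq 2$ does not hold in every configuration. It holds only under your final hypothesis that the local picture is the bare Jost--Liu configuration, and there it follows from enumerating the dipaths, not from an undirected-distance argument. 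You should drop the ``any configuration'' claim and derive both inequalities from that enumeration.

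\textbf{Your case-to-endpoint matching is the reverse of the paper's.} The paper reads ``cases 1 and 2 as bounds'' in order. Its subsequent $W_1$ estimate for $\overrightarrow{xy}$ in a directed 3-cycle charges cost $2$ at common neighbors, and the estimate for $\overrightarrow{yx}$ charges cost $3$. Your computation gives $3$ for Case 1 and $2$ for Case 2. Yours is the assignment that follows from the paper's own in-neighbor dipath rule: in $x\to y\to z\to x$ the edge at $z$ points into $x$, so $z$ cannot be reached directly from $x$. The unordered statement $2\leq c\leq 3$ survives. However, you should flag the conflict explicitly rather than treat the case labels as interchangeable. Under this convention the paper's Case 1 bound $3-\frac{2}{d_{x(in)}}-\cdots$ does not follow from the stated transport plan, because common neighbors cost $3$ there, not $2$.
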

\begin{claim}
Filling gaps at $y$ neighbors with $x$ neighbors is always: 
$$
3 \leq c \leq 4
$$
with cases 1 and 2 as bounds. 
\end{claim}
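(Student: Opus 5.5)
The claim concerns the cost, in the directed transport plan above, of filling gaps at $y$'s own neighbors using mass from $x$'s in-neighbors. In the directed setting this cost is the length of the shortest dipath from an $x$ in-neighbor $w$ to an out-neighbor $u$ of $y$ (not common to $x$). I will argue by restricting to the configuration of the claim, where $\overrightarrow{xy}$ or $\overrightarrow{yx}$ lies in a directed 3-cycle $(xyz)$, and computing the shortest dipath length $d^{\to}(w,u)$ in each of the two 3-cycle cases. The claim is that these two values are $3$ and $4$, and that they bracket every admissible configuration.

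\textbf{Lower bound.} The lower bound $c \geq 3$ comes from the undirected Jost--Liu bound: every dipath is in particular an undirected path, so $d^{\to}(w,u) \geq d(w,u)$. In the Jost--Liu configuration, where $w$ is an own neighbor of $x$ and $u$ an own neighbor of $y$ with no shortcut edges, the undirected distance through the edge $xy$ is $3$. This undirected distance is exactly the cost $3$ quoted in the previous section.

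\textbf{Case 1, $\overrightarrow{xy} \in (xyz)$.} Since $w \to x$ and $y \to u$, the dipath $w \to x \to y \to u$ has length $3$. So the lower bound is attained, and Case 1 realizes $c=3$.

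\textbf{Case 2, $\overrightarrow{yx} \in (xyz)$.} Here the cycle is $y \to x \to z \to y$, so the direct route $x \to y$ is unavailable. The only route from $x$ to $y$ inside the cycle is $x \to z \to y$, giving the dipath $w \to x \to z \to y \to u$ of length $4$. I will argue this is shortest: a path of length $3$ from $w$ to $u$ would have to use either the edge $x\to y$ or a shortcut edge between own neighbors, and both are excluded in the Jost--Liu configuration. Hence Case 2 realizes $c=4$.

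\textbf{Main obstacle: the upper bound in general.} The step I expect to be hardest is showing $c \leq 4$ beyond these two cases. If the triangle is acyclic, or if $w$ has no dipath to $x$, then $d^{\to}(w,u)$ may be larger than $4$ or even infinite. The claim is therefore only defensible under its stated hypotheses: the edge lies in a directed 3-cycle, $d_{x(in)}>0$, $d_{y(out)}>0$, and mass is sourced from in-neighbors of $x$ and delivered to out-neighbors of $y$, as in the preceding Claim. My plan is to state these hypotheses explicitly. Under them the only variable in the route is the connection $x \leadsto y$, which has length $1$ or $2$ inside a directed 3-cycle. Adding the fixed edges $w\to x$ and $y \to u$ gives $c = 2 + d^{\to}(x,y) \in \{3,4\}$, which closes the argument.
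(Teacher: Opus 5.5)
The paper gives no proof of this claim. It asserts the values $3$ (Case 1) and $4$ (Case 2), uses them in the later $W_1$ estimates, and cautions that these costs still need verification. Your two routes, $w\to x\to y\to u$ and $w\to x\to z\to y\to u$, are the natural justification of those values, so in spirit you follow the paper. Under the 3-cycle hypothesis, however, the only part you can prove in general is the upper bound, and it must be written as an inequality. Because $w\to x$ and $y\to u$ are edges, $c=d^{\to}(w,u)\le 1+d^{\to}(x,y)+1\le 4$, where the last step uses $d^{\to}(x,y)\le 2$ from the directed 3-cycle. Your closing identity $c=2+d^{\to}(x,y)$ is not valid, because the shortest dipath from $w$ to $u$ need not pass through $x$ and $y$ at all.

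The genuine gap is the lower bound, together with your assertion that Case 2 realizes $4$.

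\textbf{The lower bound.} The Jost--Liu cost $3$ is the length of the particular route $w-x-y-u$ used to bound $W_1$ from \emph{above}. It gives no lower bound on $d(w,u)$, which is at most $2$ whenever $xy$ lies on a $4$- or $5$-cycle through $w$ and $u$. In the directed setting, adding the single edge $w\to u$ keeps $w$ an own in-neighbor of $x$ and $u$ an own out-neighbor of $y$, yet makes $c=1$. So ``always $3\le c$'' is false, and your phrase ``no shortcut edges'' is doing all the work.

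\textbf{Case 2.} Your minimality step names the wrong obstructions: a length-$3$ dipath need not use $x\to y$ or an edge between own neighbors. Take the cycle $y\to x\to z\to y$ with $w\to x$ and $y\to u$, and add $w\to z$. Then:
\begin{enumerate}
\item $w$ and $u$ are still own neighbors of $x$ and $y$ respectively;
\item $d(w,u)=3$, so your lower-bound hypothesis holds;
\item $d_x$, $d_y$ and the triangle count on $xy$ are unchanged;
\item yet $w\to z\to y\to u$ is a dipath of length $3$, so $c=3$, not $4$.
\end{enumerate}

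\textbf{Repair.} Replace ``no shortcut edges'' by explicit dipath conditions.
\begin{enumerate}
\item Case 1 gives $c=3$ exactly when there is no dipath of length at most $2$ from $w$ to $u$. Undirected $d(w,u)=3$ is sufficient but stronger than needed; for instance, an edge $u\to w$ is harmless.
\item Case 2 gives $c=4$ exactly when there is no dipath of length at most $3$ from $w$ to $u$. This holds, for instance, when the local picture is just the triangle with pendant spokes.
\end{enumerate}
The honest statement is then: $c\le 4$ under the 3-cycle hypothesis, with the values $3$ and $4$ attained in the two cases only in this idealized configuration.
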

Observe that moving mass of $\frac{1}{d_x(in)}$ from y to $y$ neighbors is possible i.f.f.:
$$
1-\frac{1}{d_{x(in)}}-\frac{1}{d_{y(out)}}-\frac{\sharp(C_e=3)}{d_{x}(in) \wedge d_{y (out)}} \geq 0 
$$
The question is, can we suppose WLOG as the Liu Jost do in their proof of Theorem 3 where the suppose $d_y = d_{x} \wedge d_{y}:=\min \left\{d_{x}, d_{y}\right\}, \quad d_{x} \vee d_{y}:=\max \left\{d_{x}, d_{y}\right\}$. We cannot do this, because curvature $\mathcal{K}(x,y)$ is \textbf{not symmetric} with respect to $x$ and $y$, which is an assumption they make for this. However, we will make an assumption for now that their general proof structure works. They show for undirected graphs with the undirected costs that: 
\begin{figure}
    \centering
    \includegraphics[width=0.75\linewidth]{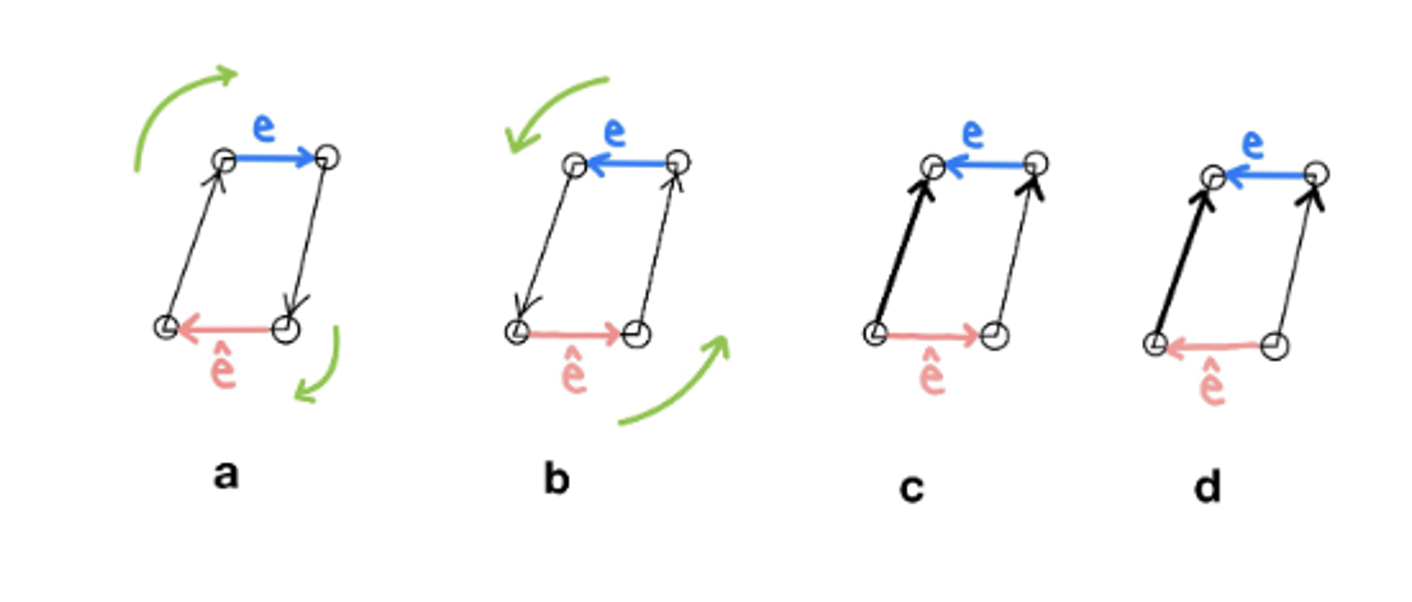}
    \caption{Four possible variations of a directed and oriented 4-cycle, adding more possible cyclic variations compared with the undirected case.}
    \label{fig:directed-4cycle-variants}
\end{figure}
$$
\begin{aligned}
W_{1}\left(m_{x}, m_{y}\right) \leq & \frac{1}{d_{x}} \times 1+\frac{1}{d_{y}} \times 1+\left(\frac{1}{d_{y}}-\frac{1}{d_{x}}\right) \times \sharp(x, y) \times 2 \\
& +\left[1-\frac{1}{d_{x}}-\frac{1}{d_{y}}-\left(\frac{1}{d_{y}}-\frac{1}{d_{x}}\right) \times \sharp(x, y)-\frac{1}{d_{x}} \sharp(x, y)\right] \times 3 \\
= & 3-\frac{2}{d_{x}}-\frac{2}{d_{y}}-\frac{\sharp(x, y)}{d_{y}}-\frac{2 \sharp(x, y)}{d_{x}} .
\end{aligned}
$$
In the directed graph setting where $|Ce| = 3$ for any given directed cycle, I will divide this into the two cases above: 
\begin{enumerate}
    \item $\overrightarrow{uv} \in (uvz)$ directed 3-cycle.
Then,
$$
\begin{aligned}
W_{1}\left(m_{x}, m_{y}\right) \leq & \frac{1}{d_{x(in)}} \times 1+\frac{1}{d_{y(out)}} \times 1+\left(\frac{1}{d_{y(out)}}-\frac{1}{d_{x(in)}}\right) \times \sharp(C_e = 3) \times 2 \\
& +\left[1-\frac{1}{d_{x(in)}}-\frac{1}{d_{y(out)}}-\left(\frac{1}{d_{y(out)}}-\frac{1}{d_{x(in)}}\right) \times \sharp(C_e = 3)-\frac{1}{d_{x(in)}} \sharp(C_e = 3)\right] \times 3 \\
= & 3-\frac{2}{d_{x(in)}}-\frac{2}{d_{y(out)}}-\frac{\sharp(C_e = 3)}{d_{y(out)}}-\frac{2 \sharp(C_e = 3)}{d_{x(in)}} 
\end{aligned}
$$
    \item $\overrightarrow{vu} \in (uvz)$ directed 3-cycle.
Then, 
$$
\begin{aligned}
W_{1}\left(m_{y}, m_{x}\right) \leq & \frac{1}{d_{x(in)}} \times 1+\frac{1}{d_{y(out)}} \times 1+\left(\frac{1}{d_{y(out)}}-\frac{1}{d_{x(in)}}\right) \times \sharp(C_e = 3) \times 3 \\
& +\left[1-\frac{1}{d_{x(in)}}-\frac{1}{d_{y(out)}}-\left(\frac{1}{d_{y(out)}}-\frac{1}{d_{x(in)}}\right) \times \sharp(C_e = 3)-\frac{1}{d_{x(in)}} \sharp(C_e = 3)\right] \times 4 \\
= & 4-\frac{3}{d_{x(in)}}-\frac{3}{d_{y(out)}}-\frac{\sharp(C_e = 3)}{d_{y(out)}}-\frac{3 \sharp(C_e = 3)}{d_{x(in)}}
\end{aligned}
$$
\end{enumerate}
We are of course interested in generalizing this to different types of cycles. Furthermore, we must ask ourselves if we have accounted for and removed all assumptions of symmetry from Jost, Liu. Below again are the assumptions we make:
\begin{itemize}
    \item $d_x(in), d_y(out) > 0$
    \item We are exclusively looking at 3-cycles of either possible directions
\end{itemize}

\begin{figure}
    \centering
    \includegraphics[width=1.0\linewidth]{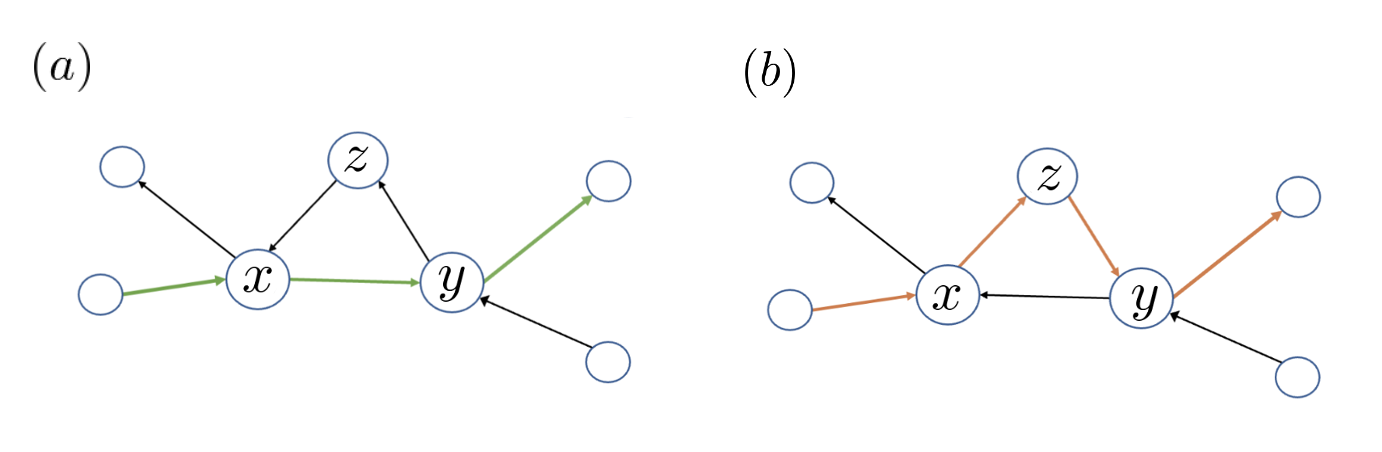}
    \caption{Mass distribution rules must change from Jost and Liu.}
    \label{fig:mass-distribution-rules}
\end{figure}
Interestingly, observe now that cases (a) and (b) in Figure 4.8 are really the same case but we are flipping $x$ and $y$. This brings the question: should we consider these two different directions, and how to we consider the curvature contribution of the non-shortest length path if it flows in our desired direction from $x$ neighbors to $y$ neighbors? In the case of Jost and Liu's work, they do not because they treat $W(x,y)$ as symmetric, in other words $W(x,y) = W(y,x)$. But in the directed case, of course we have $W(x,y) \neq W(y,x)$. So we should not be general and say for every pair $x,y$ but instead, for every edge $x \rightarrow y$, if this directional edge exists then we will only compute the cost of moving mass from $m_x$ to $m_y$. For this reason, breaking down the triangle counts by whether $|C|_e = 1$ or $|C|_e = 1$ is not sufficient. For the $|C|_e = 1$ case, there must be a directed path for us in the correct direction for us to consider the curvature contributions of the non-primary edges (non-$xy$ edges).

We will move on for now from cycles to discuss considerations for trees. Recall now the work of Jost and Liu where trees attain this lower bound on directed graphs:
$$
\kappa(x,y) \geq -2 (1 - \frac{1}{d_x} - \frac{1}{d^y})_{+}
$$
A consideration to make when working with directed trees is that we there exist in-branching, our-branching, and mixed-branching trees in which there is flow from the root word inwards, root node outwards, or a mix of flow. For research on Ollivier-Ricci curvature for directed trees we suggest first formalizing a notion and theoretical bounds pertaining separately to  true in-branching and out-branching trees respectively. Returning to the bound above, observe that this curvature value is equal to zero unless $d_x \geq 1$ and $d_y \geq 1$. 
\begin{figure}
    \centering
    \includegraphics[width=0.5\linewidth]{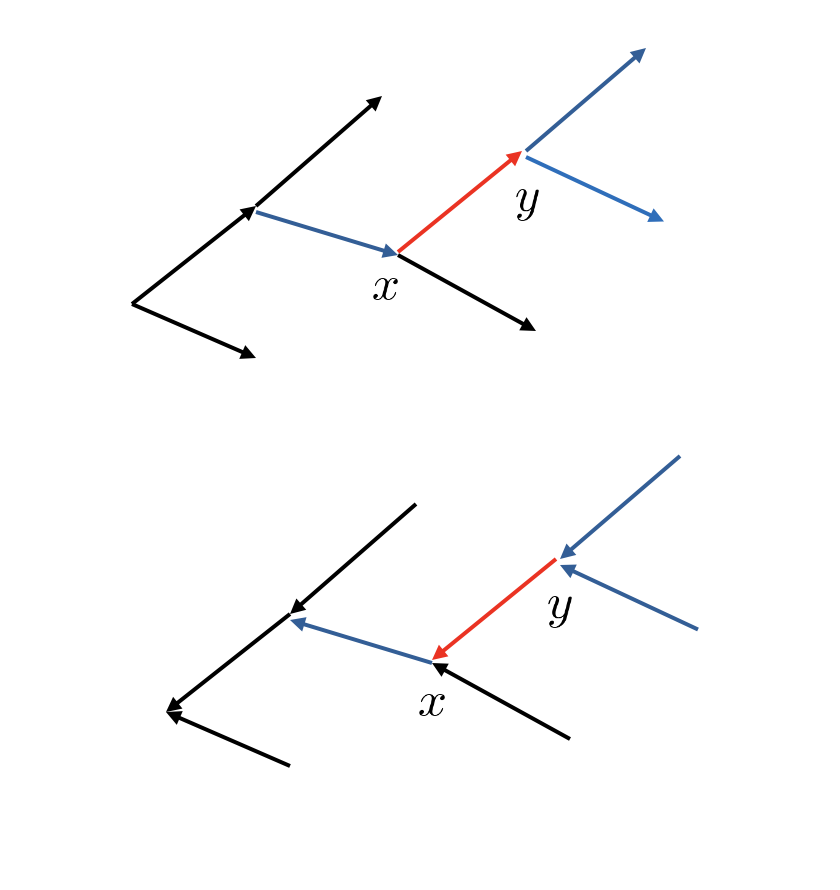}
    \caption{Directed out-branching tree (above) and in-branching tree (below).}
    \label{fig:directed-trees-branching}
\end{figure}

To begin with an out-branching tree, recall that all nodes aside from the root have exactly one incoming edge, one or more outgoing edge if not a leaf node, and finally no outgoing edges for leaf nodes. Consider the typical transport costs involved in the transport plan of (1) moving mass of $\frac{1}{d_x(in)}$ from y to y neighbors and $d_y(out)$ from x neighbors to x. Can we for example consider the singular out-neighbor of $x$ as a gap to fill? If so, the cost is two. 
Based on on Jost and Liu's argument for undirected graphs we could have have something of the form below if $d_x(out) = 2$ for example $1 - \frac{1}{d_y (out)} \geq \frac{1}{2d_x(in)}$
or an analogous extension. Though this may not be robust and model trees in other sample cases. For further investigation, we could follow the method of Jost-Liu of assigning Lipschitz functions to each vertex depending on their relationship within the neighborhoods of a transport plan. An open research question is how to reliably extend the combinatorial bounds of Jost-Liu to directed trees and cycles, and therefore there are countless other possible approaches. For example, for any number of proofs on theoretical bounds we could make the following assigments. 
\[
f(z) =
\begin{cases}
    0, & \text{if } z \in \mathcal{N}_{out}(y) z \neq x; \\
    1, & \text{if } z = y; \\
    2, & \text{if } z = x; \\
    3, & \text{if } z \in \mathcal{N}_{out}(x), \, z \neq x.
\end{cases}
\]

For the tree-based approach, observe that the key-difference between in-branching trees and out-branching trees is this: for all vertices of an out-branching tree the in-degree of a vertex is always exactly one, but the out-degree can be any number of vertices greater than or equal to one. For all vertices of an in-branching tree the in-degree of a vertex is greater or equal to one, but the out-degree can only be exactly one. Such limitations can actually be leveraged to make precise theoretical bounds for trees of these forms in the style of Jost-Liu through restrictions of the transport plans based on set in-degrees and out-degrees of a given vertex. 
Another possible approach, which we will not discuss in detail as it is largely beyond the scope of this thesis, is to exploit the work that has currently been done in the field of signed graphs and signed networks, as well as utilizing specific graph Laplacians used in these fields and other key results on graph structure and balance of structure that could be useful in simplifying some of the primary concerns of this chapter. For example, a research area to explore is to use ideas of structural balancedness and magnetic Laplacians as presented by Tian, Lambiotte in \cite{tian2024structural}.

In summary, in this section we explore considerations for a simplified directed Ollivier-Ricci curvature concept that does not involve a lazy-random walk and which is seeks to stay true to the original Ollivier-Ricci cruvature notion on graphs in a singular direction, with the understanding that asymmetry and directionality make the 1-Wasserstein metric on these newly defined measures such that it is not a true distance metric but can still capture essential properties and result ultimately in a directed Ollivier-Ricci curvature calculation at the edge-level that accurately captures flow. We explore two primary cases in which directionality must be handled with great care: cycles and trees. This is especially due to the fact of high positive curvature contributions of cycles and high negative contributions of trees, it is important to consider how mass transport is altered in the directed case. We discuss \textit{effective length} as a method of simply calculating and parsing different cycle-types, and give some ideas for how to then construct analogous theoretical bounds based on Jost-Liu with such contraints in mind. Then we moved the discussion to the case of trees and discussed out in-branching and out-branching adds additionally layers of complexity. Addressing these considerations and ideas, through experimental validation of directed curvature behavior, and theoretical work on bounds, for example, provides a set of challenging open research questions, which we will for now label as a fruitful area for future work. 
\chapter{Curvature-informed Algorithms on Graphs and Networks}

\begin{figure}
    \centering
    \includegraphics[width=0.75\linewidth]{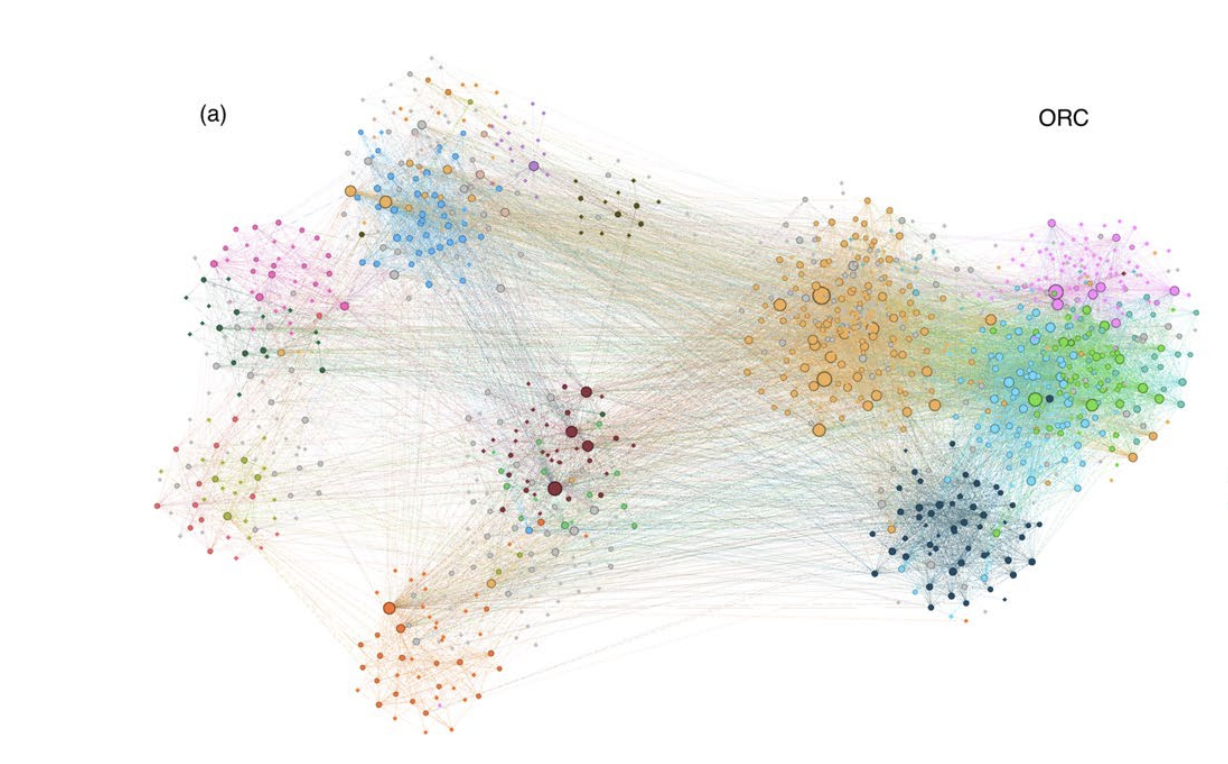}
    \caption{From Sia et al.~\cite{SiaJayson2019OCMt}, who use an Ollivier--Ricci curvature-based community detection algorithm to uncover community structures in an undirected drug--drug interaction dataset obtained from the DrugBank v4.1 database.}
    \label{fig:sia-network}
\end{figure}

The motivation for the extensive discussion of graphs and localized Ricci curvature on graphs in previous chapters is that we can extend these results to network for achieving various computational tasks. We will discuss in this chapter how Ricci curvature can be used to detect communities within a network. We will also introduce how Ricci curvature can assist with the task of graph or network \textit{rewiring} in which edges are added or removed to improve message-passing with graph neural networks. These areas have been highly studied for undirected graphs and networks. This thesis seeks to extend some of these classical results and algorithm for the purpose of using Ollivier Ricci curvature of Directed graphs to achieve improved community detection or graph rewiring, for example. These results will be introduced in the following chapter. This chapter will focus on introducing the Ricci curvature-informed algorithms for completing these computational tasks, and furthermore, why these algorithms matter in network science and machine learning applications.

An extensive amount of real-world network data is \textit{directed} in nature. For example, imagine a network of the flights taken between all global airports where each node of the network is an airport and each arc represents a plane's directed path from one starting destination to a final destination. This is just like a massive directed graph in which the vertices possess information, which is what makes nodes of a network unique from vertices of a graph. Consider another example. The internet is actually an immensely large directed network, where each page on the world wide web is a node, and each link is considered an arc in with points from one web page to another. The directed information is essential because a link from the Harvard Math department webpage to a professor's website is distinct from the link from a professor's website back to the Harvard Math homepage - one is called a backlink and one is a forward link. The internet as we know it, especially search engines like Google's PageRank, are based on the directionality of the network arcs. A social media network is another example of a directed network. Individual users each form a node and their "following" relationship to each other such as can be modeled by arcs pointing from one user to another, and there we will be two opposite arcs if the two users both follow one another. Of course, such networks are not limited to social or economic behaviors as we have mentioned so far. Biological networks such as gene regulatory networks (GRNs) can also be directed. In the case of GRNs, each node of the network is a specific gene, and one gene is pointing to another in the network if it is responsible in that gene's regulation. That said, it is my hope than any reader is now convinced of the fascinatingly ubiquitous nature of both directed and undirected networks in our world today.

One problem in network science, and especially in the subfields of curvature-based community detection or graph rewiring, it is often common for researchers to convert networks from directed to undirected for ease of computation and for the purpose of using established undirected network algorithms. For some use-cases this is a reasonable and computationally efficient decision. However, in other cases, there is potentially a loss of rich information regarding the directed connections between nodes. A future direction to address this issue is to continue research on curvature-based algorithms which explicitly use novel directed graphs notions of curvature.

\subsection*{Curvatrue-based Community Detection}
We will now discuss the first of two classic problems in network science research. This is the act of being able to detect community groups or clusters within a network. For example, in a social network that represents all college students in Boston, it is expected that communities would form at each university such that the users at Harvard, for example, form a highly connected group or community. Uncovering these communities is a task  called \textit{community detection}, and when it is performed on simple graphs, it is also referred to as \textit{graph clustering}. The overarching goal of community detection is to infer the structure underlying a network through the clustering of vertices that have similar connectivity patterns. This task can also be used to uncover communities within a labeled network in which nodes belong to specific groups or label classes, and community detection allows the uncovering of such groups. There are different types of connectivity patterns that a network may have that can be uncovered through community detection tasks. For example, some networks may be characterized by nodes being more highly connected to nodes of the same community than to nodes outside of the community, and there may also be the case where the opposite is true. Curvature-based community detection tends to answer the question of identifying or detecting communities that display highly-connected structure, but this is not by definition necessary. 

\begin{figure}
    \centering
    \includegraphics[width=0.7\linewidth]{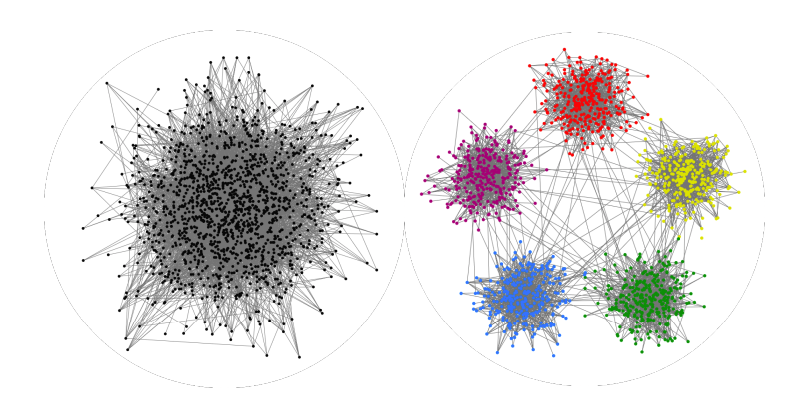}
    \caption{Figure by Emmanuel Abbe \cite{abbe2018community}. Left and right graphs are identical. These two graphs have been created with the
SBM model with 1000 vertices and  5 specified communities (balanced),  intra-community probability of 1/50 and inter-community probability of 1/1000 for edge formation. }
    \label{fig:sbm-intro}
\end{figure}

\begin{figure}
    \centering
    \includegraphics[width=1.0\linewidth]{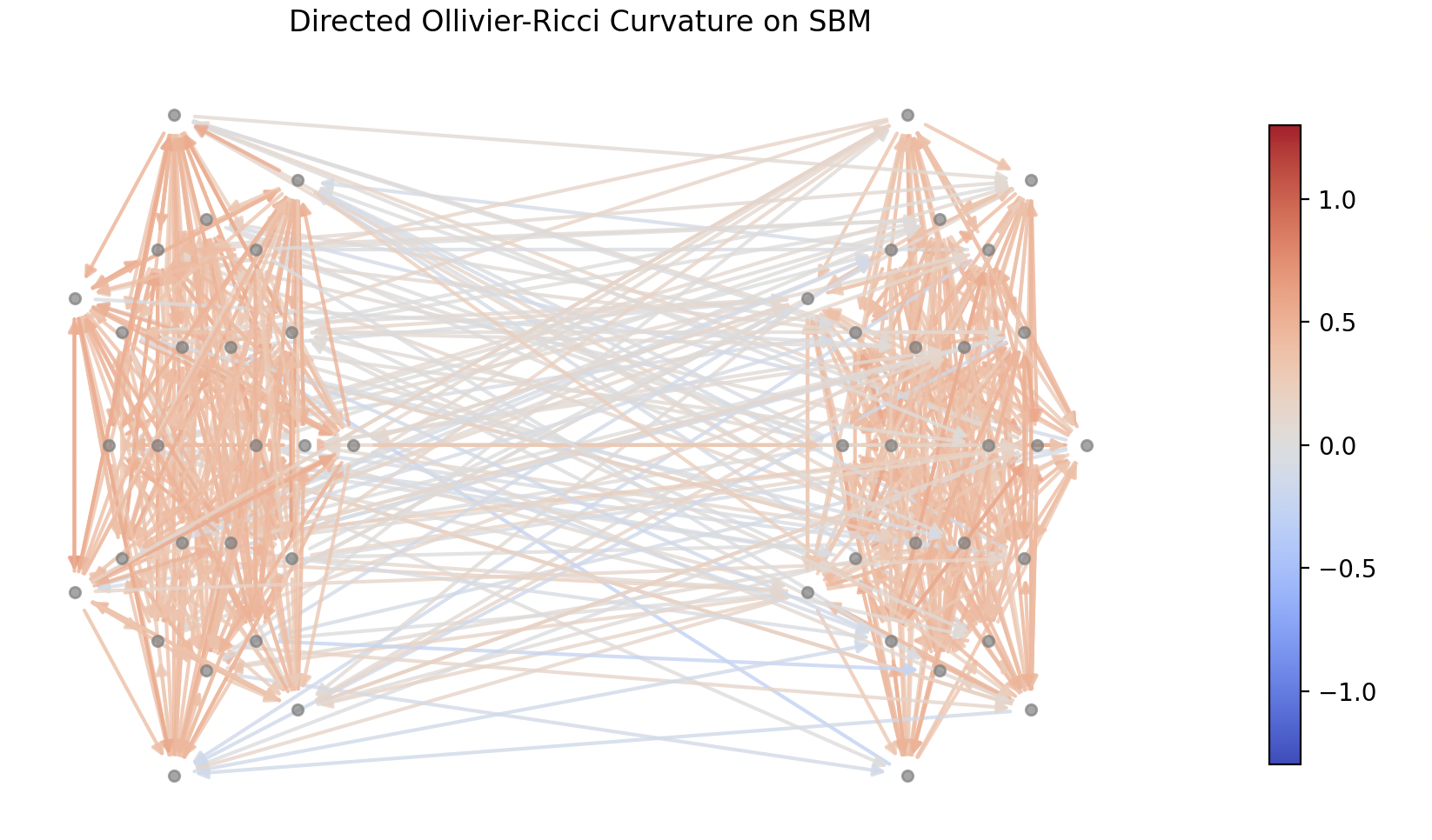}
    \caption{Example of a directed stochastic block model with 50 nodes, specified inter- and intra-community edge probabilities, and Ollivier--Ricci curvature plotted on edges.}
    \label{fig:diSBM}
\end{figure}

There are numerous algorithm for community detection, with the discovery of novel algorithms being an active research area. Such algorithms are often tested by creating a randomly generated graph with a set number of communities, called a \textit{stochastic block model} generated as follows:  all $n$ vertices are independently assigned to one of $k$ communities according to a probability $p$, and the community assignments for all $n$ vertices form the ground truth community values for the graph. Then edges between any pair of vertices are created independently according to $k \times k$ symmetric connectivity matrix $W$ in which the entries correspond to probability of an edge between each pair of vertices. These probabilities can be adjusted. As an example, in the creation of the graph one might set the edge formation probability between a pair of vertices in the same community as higher than the probability of edges outside of the community. For example, Figure~\ref{fig:sbm-intro} demonstrates an example of an SBM-generated graph with five communities in which intra-community and inter-community edge probabilities differ, allowing for community structures to form based on connectivity, and \ref{fig:diSBM} depicts an two-block, or two-community SBM model generated using directed edges only, with plotted curvature on each edge. Observe that there is a beneficial property as discussed throughout this thesis that an increase in triangles or connectivity results in more positive Ollivier-Ricci edge values in clusters and more negative edge curvature values on edges that connect communities. This discrepancy can be used to identify communities using algorithms that iteratively splice bridges for example, or others which us alternative algorithms to identify when a community or cluster separates from neighbors connected by only a bridge edge. For example, in Algorithm 2, we cite the clustering algorithm of  Tian, Lubberts, and Weber, \cite{TianYu2023CCoG}, the authors iterate through the edges of a graph and update an assigned weight to each edge based on Ricci which captures valuable curvature information, and then uses the results of this iteration to isolate or detect highly connected communities, and storing these community labels. This is a simplified version of their method, but captures how edge iteration and updating of some sort of curvature characteristic can allow for one to determine edge-level curvature and connectivity behavior of a large network using a graph curvature notion such as Ollivier-Ricci curvature. Other such algorithms include that by \cite{SiaJayson2019OCMt} used for the community detection in Figure \ref{sia-network}, which is presented in Algorithm 3. Here, the authors calculate the Ollivier-Ricci curvature for each edge in the graph or network and simply remove the edge is this edge-list with the most negative curvature, and then continue the process of re-calculating ORC and removing most negative edges, as curvature of surrounding edges changes when there are perturbation to graph structure or connecting neighbors. This method is simple and relies on the notion that bridges between communities will be negative, and communities themselves will be positive in curvature overall.

\singlespacing
\begin{algorithm}[H]
\caption{Curvature-based Clustering via Ricci Flow from \cite{TianYu2023CCoG}}
\begin{algorithmic}[1]
\Require Graph $G = (V, E, W_0)$, hyperparameters $\nu$ (step size, flow), $\epsilon$ (tolerance), $\epsilon_d$ (drop threshold), $T$ (number of iterations)
\For{$t = 1, \ldots, T$}
    \For{ $\{u,v\} \in E$}
        \State Compute curvature $\kappa(\{u,v\})$.
        \State Evolve weight under Ricci flow: 
        \[
        w^t(\{u,v\}) \leftarrow (1 - \nu \cdot \kappa(\{u,v\})) d_G(u,v)
        \]
    \EndFor
    \State Renormalize edge weights $W^t$: 
    \[
    w^t(\{u,v\}) \leftarrow \frac{|E| d_G(u,v)}{\sum_{\{u',v'\} \in E} d_G(u',v')}
    \quad \text{for all } \{u,v\} \in E
    \]
\EndFor
\State Construct cut-off points $\{x_0, x_1, \ldots, x_{n_f}\}$
\State Initialize $\mathbf{l} \in \mathbb{R}^{|V|}$ (list of node labels), $Q_{-1}, Q_{\text{best}} := \epsilon$
\For{$i = 0, \ldots, n_f$}
    \State Construct $\tilde{G}_i = (V, E_i, W^T_i)$ with $E_i = \{\{u,v\} : w^T(\{u,v\}) > x_i\}$ and $W^T_i = W^T|_{E_i}$
    \State Determine connected components of $\tilde{G}_i$. Assign node labels by components.
    \State Compute the modularity $Q_i$ of the resulting community assignment.
    \If{$\frac{Q_i - Q_{i-1}}{Q_i} > \epsilon_d$}
        \State Store label assignments in $\mathbf{l}$. $Q_{\text{best}} = Q_i$
    \EndIf
\EndFor
\If{$Q_{\text{best}} > \epsilon$}
    \Return $\mathbf{l}$
\EndIf
\end{algorithmic}
\end{algorithm}

\begin{algorithm}
\caption{Ollivier-Ricci Curvature based method for Community Detection from \cite{SiaJayson2019OCMt}}
\begin{algorithmic}[1]
\State \textbf{Input:} A graph object $G(V, E, \rho)$ with a list of nodes, $V$, and a list of edges, $E$.
\State \textbf{Output:} A graph object $G'(V, E, \rho)$ similar to $G$ but with additional node properties indicating community label.
\State $G' \gets G$ with the Ollivier-Ricci curvature calculated for all edges
\While{there exists a negative edge curvature in $G'$}
    \State Remove the most negatively curved edge
    \State Re-calculate the Ollivier-Ricci curvature for the affected existing edges in $G'$
\EndWhile
\State \textsc{PreferentialAttachment}$(G', \textit{number\_of\_communities}, \textit{minimum\_community\_size})$
\State Label each node with a unique community label according to its membership to a particular graph component
\State \textbf{return} $G'$
\end{algorithmic}
\end{algorithm}
\section*{Curvature-based Graph Rewiring}
\begin{figure}
    \centering
    \includegraphics[width=0.65\linewidth]{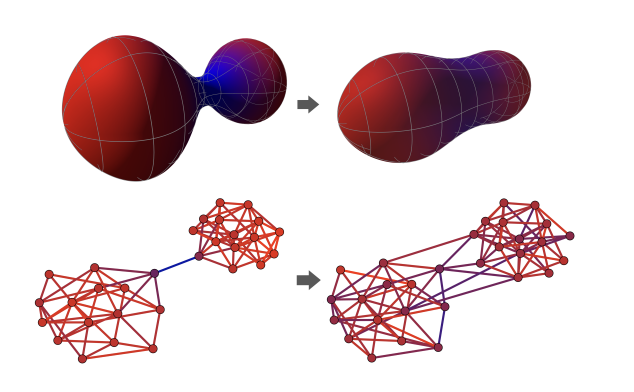}
    \caption{From \cite{topping2021understanding}: the top of this figure demonstrates how reduction of a bottleneck may be prevented through increased volume. The paper shows that bottlenecks can be reduced on graphs by adding edges between communities of positive curvature for improved message passing in graph neural networks. Negative Ricci curvature is shown in blue and positive Ricci curvature in red.}
    \label{fig:bottleneck-curvature}
\end{figure}

\doublespacing
This second type of curvature-based algorithm on graphs and networks depends on the understanding of the basics of graph neural networks (GNNs), which we refer the reader to learn the basics of in the textbook \textit{Graph Representation Learning} by William L. Hamilton (2020). The problem which curvature-basd rewiring seeks to address is the  problem of \textit{over-smoothing} and \textit{over-squashing} in GNNs. \textit{Over-smoothing} is 

\doublespacing
This second type of curvature-based algorithm on graphs and networks addresses foundational issues encountered in graph neural networks (GNNs), namely \textit{over-smoothing} and \textit{over-squashing}. For background on GNNs, we refer the reader to \textit{Graph Representation Learning} by William L. Hamilton (2020).

\textit{Over-smoothing} occurs when node representations become indistinguishable from one another as layers of the GNN increase. This leads to a loss of discriminative power, as the features of distant nodes converge to similar values due to excessive message passing. In effect, the network learns representations that are too smooth across the graph, washing out local information. Two nodes which share membership of a highly connected cluster for example, will experience over-smoothing.  \textit{Over-squashing}, on the other hand, arises when information from an exponentially growing neighborhood must be compressed into fixed-size node embeddings. This bottleneck can prevent distant, but relevant, information from properly influencing a node's representation. It often occurs between nodes in graphs with long-range dependencies or bottlenecks in the graph topology. Curvature-based rewiring techniques aim to mitigate these issues by modifying the graph structure in a way that improves the flow of information, reducing bottlenecks but also reducing an excess of flow or mass in clustered regions when it comes to GNN message-passing. In particular, these methods exploit notions of curvature like Ollivier-Ricci to identify and adjust connections in the graph, promoting more efficient aggregation paths and improving model expressivity. For more infromation on these concepts see \cite{topping2021understanding} and \cite{giraldo2023trade}, for example. We will discuss specific references in the remainder of this subsection as well. 

Various algorithms are established in the literature for rewiring graphs to systematically add additional edges between clusters so nodes across these clusters experience less over-squashing. Analogously edges can be systematically removed to address over-smoothing. We include an example algorithm used for augmented Forman Ricci curvature graph rewiring by Fesser, Weber \cite{FesserLukas2023MOaO} that uses edge curvature from all edges in a graph to systematically remove added if that fall within a certain curvature range, an add edges to the edge set by selecting appropriate neighborhoods for their connecting vertices. Note that augmented Forman Ricci curvature displays different behavior than Ollivier-Ricci curvature, but  analogous rewiring algorithms for Ollivier-Ricci are an active area of research. This has so far been done by \cite{topping2021understanding}, \cite{attali2024rewiring}, \cite{nguyen2023revisiting-rewiring} as examples. Furthermore, we note that these methods to the directed case, as presented in a sample example in Figure 5.5., should take into account directionality of bridges added when performing a directed rewiring technique. To our knowledge, this has not been studied and is another open research question.

\begin{figure}
    \centering
    \includegraphics[width=1.0\linewidth]{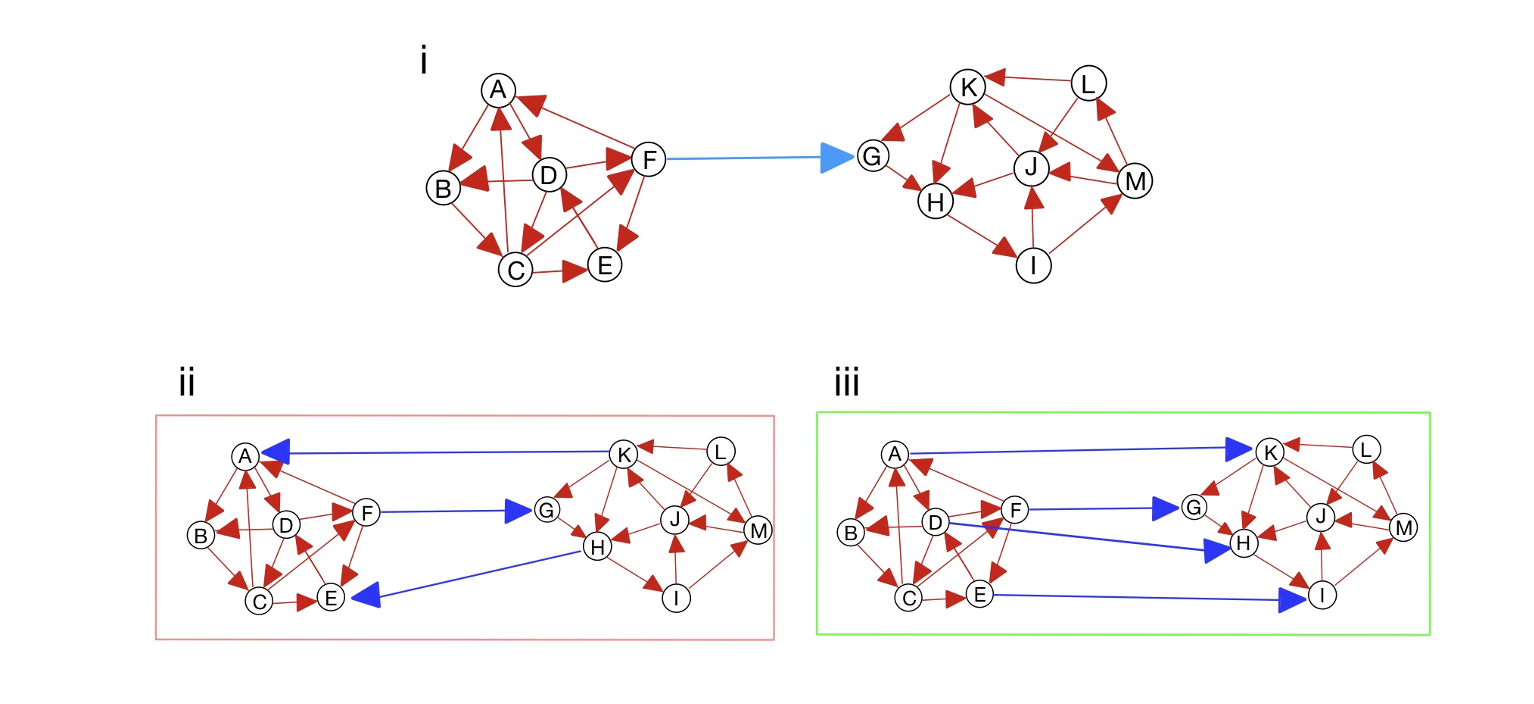}
    \caption{Graph rewiring in the directed case must consider directionality of added edges when reducing over-smoothing and over-squashing.}
    \label{fig:directed-rewiring}
\end{figure}

\begin{algorithm}
\caption{Augmented Forman Ricci Curvature Graph Rewiring from \cite{FesserLukas2023MOaO}}
\begin{algorithmic}[1]
\Require A graph $G(V, E)$, heuristic (Bool), \# edges to add $h$, \# edges to remove $l$
\For{each $e \in E$}
    \State compute $\mathcal{AF}_3(e)$
\EndFor
\State Sort $e \in E$ by $\mathcal{AF}_3(e)$
\If{heuristic == True}
    \State Compute lower threshold $\Delta_L$
    \For{each $(u, v) \in E$ with $\mathcal{AF}_3(u, v) < \Delta_L$}
        \State choose $w \in \mathcal{N}_u \setminus \mathcal{N}_v$ uniformly at random
        \State add $(w, v)$ to $E$
    \EndFor
    \State Compute upper threshold $\Delta_U$
    \State Remove all edges $(u, v)$ with $\mathcal{AF}_3(u, v) > \Delta_U$
\Else
    \State Find $h$ edges with lowest $\mathcal{AF}_3$ values
    \For{each of these edges}
        \State choose $w \in \mathcal{N}_u \setminus \mathcal{N}_v$ uniformly at random
        \State add $(w, v)$ to $E$
    \EndFor
    \State Find $k$ edges with highest $\mathcal{AF}_3$ values and remove them from $E$
\EndIf
\Return a rewired graph $G' = (V, E')$ with new edge set $E'$
\end{algorithmic}
\end{algorithm}

\setstretch{\dnormalspacing}

\listoffigures
\clearpage

\backmatter

\end{document}